\theoremstyle{plain}
\newtheorem{thm}{Theorem}[section]
\newtheorem{prop}[thm]{Proposition}
\newtheorem{cor}[thm]{Corollary}
\theoremstyle{remark}
\newtheorem{remark}[thm]{Remark}
\theoremstyle{definition}
\newtheorem{defin}[thm]{Definition}
\newtheorem{ex}[thm]{Example}
\newtheorem{prob}[thm]{Problem}
\newcommand{\J}{\mathbb{J}}
\newcommand{\N}{\mathbb{N}}
\newcommand{\Z}{\mathbb{Z}}
\newcommand{\R}{\mathbb{R}}
\newcommand{\cF}{\mathcal F}
\newcommand{\cC}{\mathcal{C}}
\newcommand{\cB}{\mathcal{B}}
\newcommand{\cM}{\mathcal{M}}
\newcommand{\xt}{\tilde x}
\newcommand{\zt}{\tilde z}
\newcommand{\ft}{\tilde f}
\newcommand{\Zt}{\tilde Z}
\newcommand{\St}{\tilde S}
\newcommand{\Tt}{\tilde T}
\newcommand{\Pb}{\overline{P}}
\newcommand{\vp}{\varepsilon}
\newcommand{\supp}{\text{\rm supp}}
\newcommand{\coo}{c_{00}}
\newcommand{\Vol}{\text{\rm Vol}}
\newcommand{\Id}{\text{\rm Id}}
\newcommand{\ra}{\rangle}
\newcommand{\la}{\langle}
\def\hangbox to #1 #2{\vskip1pt\hangindent #1\noindent \hbox to
#1{#2}$\!\!$}
\title{Coefficient Quantization for Frames in Banach Spaces}
\author{P.~G.~Casazza}\address{Department of Mathematics\\ University of Missouri
\\ Columbia, Mo 65211 USA}\email{pete@math.missouri.edu}
\author{S.~J.~Dilworth}\address{Department of Mathematics\\ University of South Carolina\\
Columbia, SC 29208  USA} \email{dilworth@math.sc.edu}
\author{E. Odell}
\address{Department of Mathematics \\
The University of Texas\\1 University Station C1200\\
Austin, TX 78712  USA}
\email{odell@math.utexas.edu}
\author{Th. Schlumprecht}
\address{Department of Mathematics, Texas A\&M University\\
College Station, TX 77843, USA}
\email{thomas.schlumprecht@math.tamu.edu}
\author{A. Zs\'ak}
\address{School of Mathematics, University of Leeds, \\ Leeds, LS2 9JT, United Kingdom}
\email{zsak@maths.leeds.ac.uk}
\thanks{\textit{2000 Mathematics Subject Classification}: Primary 46B20, Secondary 41A65.} 
\thanks{The research of
the  first, second, third and fourth author was supported by the NSF. The first, second, third, and fifth authors were supported by
the Linear Analysis Workshop at Texas A\&M University in  2007. All authors were supported by the Banff
 International Research Station}
\keywords{Coefficient Quantization; Banach spaces; Frames} 
\begin{document}

\baselineskip 16pt
\begin{abstract} Let $(e_i)$ be a fundamental system of a Banach space.
 We consider the problem of approximating  linear combinations of elements 
  of this system by linear combinations using quantized coefficients. We will
   concentrate on systems which are possibly redundant. Our model for this 
   situation will be frames in Banach spaces.
\end{abstract}
\maketitle

\markboth{P.~G.~CASAZZA et al}{COEFFICIENT QUANTIZATION FOR FRAMES
IN BANACH SPACES}

\tableofcontents
\section{Introduction}\label{S:1}

Hilbert space frames provide a crucial theoretical underpinning for
  compression,  storage and   transmission of signals because they provide
 robust and stable representation of vectors. They  also have applications in mathematics
 and engineering in a wide variety of areas including
 sampling theory \cite{AG}, operator theory \cite{HL},  harmonic analysis , nonlinear sparse approximation 
\cite{DE}, pseudo-differential operators \cite{GH}, and quantum computing \cite{EF}.

  In many
 situations  it is  useful to think of a signal 
as being a vector
$x$ in a Hilbert space 
and being  represented as a   (finite or infinite) sequence  
 $(<x_i,x>)_{i=1}^\infty$,
where  $(x_i)$ is a  {\em frame}, i.e.
a sequence in $H$ which satisfies for some $0<a\le b$,
\begin{equation}\label{E:1.1}
a\|x\|^2\le\sum |<x_i,x>|^2\le b\|x\|^2, \text{ whenever } x\in H.
\end{equation}
Since the sequence $(x_i)$ does not have to be (and usually is not) a  basis for $H$,
the representation of an $x\in H$ as the sequence   $(<x_i,x>)_{i=1}^\infty$
 includes some redundancy, which, for example, can be used  to correct
errors in transmissions \cite{GKK}.  Using a Hilbert space as the underlying space
has, {\it inter alia\/}, the advantage of an easy reconstruction formula.
Nevertheless, there are circumstances which make it necessary to leave the 
confines of a Hilbert space,
and generalize  frames to the category of Banach spaces. One such instance
occurs when we wish to replace the 
frame coefficients by {\em quantized  coefficients}, i.e.
by integer multiples of a  given $\delta>0$.

An example of such a situation is described by Daubechies and DeVore in  
\cite{DD}:
Let $f\in L_2(-\infty,\infty)$ be a {\em band-limited function}, to wit, 
the  support of the Fourier Transform $\hat{f}$ is 
contained in $[-\Omega,\Omega]$ for some $\Omega>0$.
For simplicity we assume  that $\Omega=\pi$.
Now we can think of $\hat{f}$ as an element of $L_2[-\pi,\pi]$, write
$\hat{f}$ on $[-\pi,\pi]$ as a series  in $e^{-i nx}$,  $n\in\Z$, and apply the 
inversion formula
for the Fourier  transform. This leads to the {\em sampling formula}
$$
f(x)=\sum_{n\in\Z} f(n)\frac{\sin(x\pi-n\pi)}{x\pi-n\pi}, \quad x\in\R.
$$
This series converges `badly'. In particular it 
is not absolutely convergent in general. Therefore
we  consider  some $\lambda>1$ and  think 
of the space  $L_2[-\pi,\pi]$ as being
embedded (in the natural way) into  $L_2[-\lambda\pi,\lambda\pi]$. 
The family of functions
$ (e^{-i nx/ \lambda})_{n\in\Z}$ 
forms an orthogonal basis for $L_2[-\lambda\pi,\lambda\pi]$, and
it can be viewed
as a frame for the `smaller' space $L_2[-\pi,\pi]$ (see section \ref{S:2}).  We write
$\hat{f}(\xi)= \sqrt{2\pi}\hat{\rho}(\xi) \cdot \hat{f}(\xi)$,  
 where $\hat{\rho}:\R\to[0,1/\sqrt{2\pi} ]$ is $\cC_\infty$,
$\hat{\rho}|_{[-\pi,\pi]}\equiv 1/\sqrt{2\pi}  $,  and
$\hat{\rho}|_{(-\infty,\-\lambda\pi]\cup[\lambda\pi,\infty)}\equiv 0$. 
Now we can express  $\hat{f}$ on
$[-\lambda\pi,\lambda\pi]$ as a  series in
 $ (e^{-i nx/ \lambda})$  and apply the inverse transform once again. 
This leads to the expansion
$$
f(x)=\frac1\lambda \sum_{n\in\Z} f\Big(\frac{n}{\lambda}\Big)
 \rho\Big({x}-\frac{n}{\lambda}\Big), \quad x\in\R,
$$
which not only converges faster, but is also absolutely unconditionally convergent, since
 $\hat \rho$ is $C_\infty$, and, thus, $\rho$ and all its derivatives are in $L_1(\R)$.

Now assume that $\|f\|_{L_\infty}\le 1$ (note that bandlimited functions are bounded 
in $L_\infty$).
It was shown in \cite{DD} that  the $\Sigma-\Delta$-{\em quantization algorithm} 
 can be used to
find  a sequence $(q_n)_{n\in\Z}\subset\{-1,1\}$ for which
$$ \Big\vert f(x)-\frac1\lambda\sum_{n\in\Z} 
  q_n \rho\Big(x-\frac{n}{\lambda}\Big)\Big\vert\le
\frac1\lambda \|\rho'\|_{L_1}, \text{ for }x\in\R. 
$$
This means that 
 our approximation does not hold in $L_2$ (and it need not for 
the applications at hand)
but it does hold in the Banach space $L_\infty$ (in fact in $C(\R)$).

We 
consider therefore a signal to be an arbitrary vector $x$ in a Banach space $X$ and
 ask if there is a dictionary $(e_i)$, e.g. some sequence $(e_i)$ whose
 span is dense in $X$,  so that $x$ can be approximated in norm, 
up to some $\vp>0$, by a linear combination 
  of the $e_i$'s using  only coefficients from a discrete alphabet, i.e. the integer multiples of some given $\delta$. 
 The case that $(e_i)$ is a non-redundant system, for example a basis, or, more generally,
 a total fundamental minimal  system, was treated in \cite{DOSZ}.  It was shown there, for example,
that if $(e_i)$ is a semi-normalized fundamental and
total  minimal system  which has the property  that for some $\vp,\delta>0$ every vector of the form
$x=\sum_{i\in E} a_i e_i$, with $E\subset \N$ finite, can 
 be $\vp$-approximated by a vector $\tilde x= \sum_{i\in E} \delta k_i e_i$,
 with $(k_i)\subset \Z$, then  $(e_i)$ must have a subsequence which is either
equivalent to  unit-vector basis of $c_0$, or to the summing basis for $c_0$. Conversely,
every separable Banach space $X$ containing $c_0$ admits such 
  a total fundamental minimal  system.
 
In this work we will concentrate on redundant dictionaries. 
Our  model for redundant  dictionaries will be  frames in Banach spaces. In section \ref{S:2} we shall
 recall their definition  and make some elementary observations.  Before we tackle the problem of
 coefficient quantization with respect to frames, we first have to ask ourselves what exactly we mean
 by a {\em meaningful coefficient quantization}.  In section \ref{S:3} we recall the notion
 {\em Net Quantization Property } (NQP) as introduced for fundamental systems in \cite{DOSZ}.
 We shall then  present several examples of systems which formally satisfy the NQP, but on the other hand  clearly do not
 accomplish the goals of quantization, namely data compression and easy reconstruction. These 
examples will lead us to a  notion of quantization which is more restrictive, and more meaningful,
 in the case of redundant systems. 
 
In section \ref{S:4} we ask under which circumstances one can approximate a vector in a Banach space
$X$ by a vector with quantized coefficients which are bounded in some {\em associated sequence space}
 $Z$ with a basis $(z_i)$  (see Definition \ref{D:4.1}).
 If $Z$ has non trivial lower estimates this is   only possible 
 if 
 one reconciles with the fact  that the length of the frame increases exponentially with the dimension of the underlying space. 
 We shall show  this type of quantization cannot happen if 
  $(z_i)$ satisfies   nontrivial  lower  and  upper estimates. The proof 
of these facts utilizes volume arguments
 and  must therefore be formulated first in the finite-dimensional case. An infinite-dimensional
 argument proves directly that the associated space $Z$ with a semi-normalized basis $(z_i)$ cannot be reflexive. In particular, there is no semi-normalized frame $(x_i)$  for an infinite-dimensional Hilbert space so that
 for some choice of $0<\vp,\delta<1$ and $C\ge 1$, every $x\in H$, $\|x\|=1$, can be $\vp$-approximated
by a vector $\tilde x=\sum \delta  k_i x_i$, with $(k_i)\subset \Z$ and $\sum \delta^2 k_i^2 \le C$.

In section \ref{S:5} we consider conditions under which 
  an $n$-dimensional space admits,  for given $\vp,\delta>0$ and $C\ge 1$,
 a finite frame $(x_i)_{i=1}^N$, so that every element in the {\em zonotope}
$\{\sum_{i=1}^N a_i x_i : |a_i|\le 1\}$ can be $\vp$-approximated by some
element from $\{\sum_{i=1}^N \delta k_i x_i : k_i\in \Z, |k_i|\le C/\delta \}$.
  Using results from convex geometry we shall show that this is only possible
 for spaces  $X$ with trivial cotype. Among others, we provide an answer
 to a question raised in \cite{DOSZ} and prove that $\ell_1$ does not have a semi-normalized basis with the NQP. 

In the  final section  we will state some open problems.

 All Banach spaces are considered to be spaces  over the real field $\R$.
$S_X$ and $B_X$, denote the unit sphere and the unit ball of a Banach space $X$,
 respectively.
For a set $S$ we denote by $\coo(S)$, or simply  $\coo$, if $S=\N$, the set
 of all families $x=(\xi_s)_{s\in S}$ with finite support,
 $\supp(x)=\{s\in S: \xi_s\not= 0\}$. The unit vector basis of $\coo$, as well as the unit vector basis
 of $\ell_p$, $1\le p<\infty$, and $c_0$ is denoted by $(e_i)$.

A {\em Schauder basis}, or simply a {\em basis}, of a Banach space $X$ is a sequence $(x_n)$, which has the property
that every $x$ can be uniquely written as a norm converging series $x=\sum a_i x_i$.
 It follows then from the Uniform Boundedness Principle that
 the coordinate  functionals $(x_n^*)$, 
$$x^*_n: X\to \R,\quad \sum a_i x_i\mapsto a_n$$ are bounded (cf.\cite{FHHMPZ}) and
the projections $P_n$, with 
$$P_n: X\to X,\quad x=\sum a_i x_i\mapsto \sum_{i=1}^n a_i x_i,\text{ for $n\in\N$}$$
are continuous and uniformly bounded in the operator norm. We call
$C=\sup_{n\in\N} \|P_n\|$ the {\em basis constant of $(x_i)$} and
$K=\sup_{0\le m\le n} \|P_n-P_m\|$ $(P_0\equiv 0$) the  {\em projection constant of $(x_i)$}.
Note that $C\le K\le 2C$. We call  $(x_n)$ {\em monotone} if $C=1$ and {\em bimonotone} if also $K=1$.
A basis $(x_n)$ is called {\em unconditional} if for any $x\in X$ the unique representation
 $x=\sum a_nx_n$ converges unconditionally. This is equivalent (cf. \cite{FHHMPZ}) to the property
that for all $(a_i)\in \coo$ 
$$K_u=\sup\Big\{\Big\| \sum \pm a_i x_i\Big\|:  \Big\|\sum  a_i x_i \Big\|=1 \Big\}<\infty.$$

If $X$ is a finite  dimensional space we can represent it isometrically as $(\R^n,\|\cdot\|)$ where 
$\|\cdot\|$ is a norm function on $\R^n$. With this representation we
 consider the Lebesgue measure of a measurable set $A\subset \R^n$ and  denote
  it by $\Vol(A)$. Of course $\Vol(A)$ depends on the  representation of $X$.
Nevertheless, if we only consider certain ratios of volumes this is not the case.
 Therefore, the quotient $\Vol(A)/\Vol(B)$ is well defined even in abstract finite dimensional
 spaces without any specific representation.       

\section{Frames in Hilbert spaces and Banach spaces}\label{S:2}

In this section we give a short review of the concept of frames in Banach spaces,
 and  make some preparatory observations. Let us start with the  well known notion   of  Hilbert space
 frames.

\begin{defin}\label{D:2.1} Let $H$ be a  (finite or infinite dimensional) Hilbert space.
A sequence $(x_j)_{j\in \J}$ in  $H$, $\J=\N$ or $\J=\{1,2,\ldots, N\}$, for some $N\in\N$, is called a {\em frame of $H$}
or {\em Hilbert frame for $H$}
if there are $0<a\le b<\infty$ so that
\begin{align}\label{E:2.1.1}
 a\|x\|^2\le \sum_{j\in \N} |\la x,x_j\ra|^2\le b \|x\|^2 \text{ for all $x\in H$}.
\end{align}
\end{defin}
For a frame   $(x_j)_{j\in\J}$  of $H$ we consider the operator
\begin{equation*}
\Theta: H\to \ell_2(\J), \qquad x\mapsto  (\la x,x_j\ra)_{j\in \J}, 
\end{equation*}
 its adjoint
 \begin{equation*}
\Theta^*:\ell_2(\J)\to H, \qquad (\xi_j)_{j\in \J}\mapsto \sum_{j\in \J} \xi_j x_j 
\end{equation*}
and their product
\begin{equation*}
	I=\Theta^* \circ \Theta: H\to H,\qquad x\mapsto \sum_{j\in\J} \la x,x_j\ra x_j.
\end{equation*}
Since   
\begin{equation*}
a \|x\|^2 \le   \sum_{j\in \N} |\la x,x_j\ra|^2= \Big\la x,   \sum_{j\in \N} \la x,x_j\ra x_j\Big\ra
    = \la x,I(x)\ra\le b\|x\|^2,
\end{equation*}
$I$ is a positive and invertible operator with  $a\text{Id}_H\le I\le  b \text{Id}_H$ and thus, 
\begin{align*}
&x=I^{-1}\circ I (x) = \sum_{j\in \N} \la x,x_j\ra I^{-1}(x_j),\text{ or }\\
\label{E:1.4}
&x=I\circ I^{-1} (x) =  \sum_{j\in \N} \la I^{-1}(x),x_j\ra x_j = \sum_{j\in \N} \la x,I^{-1}(x_j)\ra x_j.
\end{align*}

For an introduction to the theory of Hilbert space frames we refer the reader to \cite{Ca1} and \cite{Ch}.
  We follow \cite{HL} and \cite{CHL} for the generalization of   frames to Banach spaces.

\begin{defin}\label{D:2.2}(Schauder Frame)
Let $X$ be a  (finite or infinite dimensional) separable Banach space. A  sequence
 $(x_j,f_j)_{j\in\J}$, with  $(x_j)_{j\in\J}\subset X$,
 $(f_j)_{j\in\J}\subset X^*$, and $\J=\N$ or $\J=\{1,2\ldots N\}$, for some $N\in\N$, is called a
 {\em (Schauder) frame  of $X$} if
for every $x\in X$
\begin{equation}\label{E:2.2.1}
x=\sum_{j\in\J} f_j(x) x_j \,.
\end{equation}

In case that $\J=\N$, we mean that the series in \eqref{E:2.2.1} converges
 in norm, i.e. that $x=\lim_{n\to\infty}\sum_{j=1}^n f_j(x) x_j$. 

 An {\em unconditional frame of $X$}
  is a frame
 $(x_i,f_i)_{i\in\N}$ for $X$
 for which  the convergence in (\ref{E:2.2.1}) is unconditional.

 We call a frame $(x_i,f_i)$ {\em bounded } if
$$\sup_i\|x_i\|<\infty \text{ and }\sup_i\|f_i\|<\infty,$$
 and semi-normalized if $(x_i)$ and $(f_i)$ are semi-normalized, i.e. if 
$0<\inf_i\|x_i\|\le \sup_i\|x_i\|<\infty$ and $0<\inf_i\|f_i\|\le \sup_i\|f_i\|<\infty$.
\end{defin}

In the following Remark we make some easy observations.
\begin{remark}\label{R:2.3}
Let $(x_i,f_i)_{i\in\N}$  be a frame of $X$.
\begin{enumerate}
\item[a)]  If  $\inf_{i\in\N} \|x_i\|>0$, then
$\displaystyle f_i{\mathop{\to}^{w^*}} 0$ as $i\to\infty$.
\item[b)] Using the Uniform Boundedness Principle we deduce that
$$K=\sup_{x\in B_X}\sup_{m\le n} \Big\|\sum_{i=m}^n f_i(x) x_i\Big\|<\infty.$$
This implies that  if $\inf_{i\in\N} \|x_i\|>0$ then $(f_i)$ is bounded and if
  $\inf_{i\in\N} \|f_i\|>0$   then $(x_i)$ is bounded.

We call $K $ the {\em projection constant of  $(x_i,f_i)$}. The projection constant
 for finite frames is defined accordingly.

\item[c)] For all $f\in X^*$ and $x\in X$ it follows that
 $$f(x)=f\Big(\sum_{i=1}^\infty f_i(x) x_i\Big)=\sum_{i=1}^\infty f_i(x) f(x_i)
=\lim_{n\to\infty} \Big(\sum_{i=1}^nf(x_i) f_i\Big)(x),$$ and, thus,
$$f=w^*-\sum_{i=1}^\infty f(x_i) f_i.$$
Moreover, for $m\le n$ in $\N$ it follows that
\begin{align}\label{E:2.3.1}
\Big\| \sum_{i=m}^n f(x_i) f_i\Big\|&= \sup_{x\in
B_X}\Big|\sum_{i=m}^n f(x_i) f_i(x)\Big|
\le
 \|f\| \sup_{x\in B_X}\Big\|\sum_{i=m}^n f_i(x) x_i\Big\|\le K \|f\|,
 \end{align}
and
\begin{align}\label{E:2.3.2}
\Big\| \sum_{i=m}^\infty f(x_i) f_i\Big\|&=\sup_{x\in B_X}\Big|\sum_{i=m}^\infty f(x_i) f_i(x)\Big|\\
&=\sup_{x\in B_X}f\Big(\sum_{i=m}^\infty  f_i(x) x_i\Big)\notag\\
&\!\!\!\!\begin{cases} \le \sup_{ z\in\text{\rm span}(x_i:i\ge m), \|z\|\!\le\!
K }f(z)\!=\!
 K\|f|_{\text{\rm span}(x_i:i\ge m)}\|\\
\ge\|f|_{\{f_1,f_2,\ldots f_m\}^\perp}\|.
\end{cases}\notag
\end{align}
\item[d)] If $(x_i,f_i)$ is an unconditional frame it follows from the Uniform Boundedness Principle that
$$K_u=\sup_{x\in B_X}\sup_{(\sigma_i)\subset\{\pm 1\}} \Big\|\sum \sigma_i f_i(x) x_i\Big\|<\infty.$$
We call $K_u$ the  {\em unconditional constant of  $(x_i,f_i)$}.
\end{enumerate}
\end{remark}
The
following  Proposition  is    a slight variation of
 \cite[Theorem 2.6]{CHL}.

\begin{prop}\label{P:2.4}
 Let $X$ be a separable  Banach space and let $(x_i)_{i\in\J}\subset X$
 and $(f_i)_{i\in\J}\subset X^*$, with  $\J=\N$ or $\J=\{1,2,\ldots N\}$ for some $N\in\N$.
\begin{enumerate}
\item[a)] $(x_i,f_i)_{i\in\J}$ is
 a Schauder frame of $X$ if and only if
there is
 a Banach space $Z$ with a Schauder  basis $(z_i)_{i\in\J}$ and corresponding
 coordinate functionals $(z_i^*)$,  an isomorphic embedding $T:X\to Z$
  and a bounded linear surjective map
 $S:Z\to X$, so that  $S\circ T=Id_X$ (i.e. $X$ is isomorphic to a complemented
 subspace of $Z$), and  $S(z_i)=x_i$,  for $i\in\J$, and $T^*(z^*_i)=f_i$,  for $i\in\J$, with $x_i\not=0$.

Moreover $S$ and $T$ can be chosen so that $\|S\|=1$ and $\|T\|\le K$, where $K$ is the projection constant
 of $(x_i,f_i)$, and $(z_i)$ can be chosen to be a bimonotone basis with $\|z_i\|=\|x_i\|$ if $i\in\J$, with $x_i\not=0$.
\item[b)]   $(x_i,f_i)_{i\in\J}$ is
 an unconditional  frame  of $X$
 if and only if there  is 
 a Banach space $Z$ with an unconditional   basis $(z_i)$ and
corresponding
 coordinate functionals $(z_i^*)$,  an isomorphic embedding $T:X\to Z$
  and a surjection
 $S:Z\to X$, so that $S\circ T=Id_X$, $S(z_i)=x_i$,  for $i\in\J$, and $T^*(z^*_i) =f_i$ for $i\in\J$, with $x_i\not=0$.
\end{enumerate}
\end{prop}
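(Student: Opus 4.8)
The plan is to prove part (a) in both directions, then indicate the modifications needed for part (b). For the ``if'' direction, suppose such $Z$, $(z_i)$, $T$, $S$ are given with $S\circ T = \Id_X$, $S(z_i) = x_i$, and $T^*(z_i^*) = f_i$. For $x \in X$ write $Tx = \sum_i z_i^*(Tx) z_i$, which converges in $Z$ since $(z_i)$ is a Schauder basis; apply the bounded operator $S$ to get $x = S(Tx) = \sum_i z_i^*(Tx) S(z_i) = \sum_i (T^*z_i^*)(x)\, x_i = \sum_i f_i(x) x_i$, with convergence in $X$. Hence $(x_i,f_i)$ is a Schauder frame. (For indices $i$ with $x_i = 0$ the corresponding term vanishes and the statement about $f_i$ is vacuous, so this causes no trouble.)

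For the ``only if'' direction, the natural construction is to let $Z$ be the space of sequences $(a_i)$ for which the partial sums $\sum_{i=1}^n a_i x_i$ form a convergent-in-$X$ sequence that is appropriately controlled, normed by
\begin{equation*}
\Big\| (a_i) \Big\|_Z = \sup_{m \le n} \Big\| \sum_{i=m}^n a_i x_i \Big\|.
\end{equation*}
One checks $Z$ is a Banach space, that the unit vectors $(z_i)$ form a bimonotone basis (monotonicity and bimonotonicity are immediate from the shape of the norm, and $\|z_i\|_Z = \|x_i\|$), and that $S : Z \to X$, $(a_i) \mapsto \sum_i a_i x_i$ is well defined, linear, and has $\|S\| = 1$. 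Define $T : X \to Z$ by $Tx = (f_i(x))_i$; Remark~\ref{R:2.3}(b) gives $\|Tx\|_Z = \sup_{m\le n}\|\sum_{i=m}^n f_i(x) x_i\| \le K\|x\|$, so $\|T\| \le K$, and the frame identity \eqref{E:2.2.1} shows $S \circ T = \Id_X$, so in particular $T$ is an isomorphic embedding onto a complemented subspace. Finally $S(z_i) = x_i$ is immediate, and $T^*(z_i^*) = f_i$ follows by testing on $x$: $(T^* z_i^*)(x) = z_i^*(Tx) = z_i^*((f_j(x))_j) = f_i(x)$.

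The main technical point—where care is genuinely needed—is verifying that $Z$ as defined is \emph{complete} and that the sequences in it really do produce norm-convergent series $\sum a_i x_i$ in $X$ (so that $S$ is everywhere defined), rather than merely Cauchy partial sums; here one uses that $X$ is complete together with the bound defining the $Z$-norm, and one must be slightly careful about whether to impose convergence of $\sum a_i x_i$ as part of the definition of $Z$ or to take the closure of $\coo$ in the above norm and then show the two coincide. For part (b), one replaces the norm by $\|(a_i)\|_Z = \sup_{(\sigma_i)\subset\{\pm1\}} \sup_n \|\sum_{i=1}^n \sigma_i a_i x_i\|$, which is finite on the relevant sequences by Remark~\ref{R:2.3}(d); this makes $(z_i)$ an unconditional basis, and the remaining verifications ($\|S\| \le 1$ with the appropriate constant, $S\circ T = \Id_X$, $S(z_i) = x_i$, $T^*(z_i^*) = f_i$) go through exactly as before, the only difference being that one no longer claims bimonotonicity or the sharp norm bounds.
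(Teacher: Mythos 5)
Your construction follows the paper's argument in all essential respects — the ``if'' direction, the ``max of consecutive blocks'' norm $\sup_{m\le n}\|\sum_{i=m}^n a_i x_i\|$, the estimates $\|S\|=1$ and $\|T\|\le K$ via Remark~\ref{R:2.3}(b), and the verification $T^*(z_i^*)=f_i$ — but it has one genuine gap: you do not handle the indices $i$ for which $x_i=0$. Your formula is only a \emph{seminorm} on $\coo(\J)$ when some $x_i=0$: the unit vector at such an index has seminorm zero, hence is identified with $0$ in $Z$, and then $(z_i)_{i\in\J}$ cannot be a Schauder basis of $Z$ and $z_i^*$ is not even well defined. Your own observation that $\|z_i\|_Z=\|x_i\|$ already exposes the problem, since that quantity vanishes exactly when $x_i=0$. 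The paper repairs this by setting $\tilde\J=\{i\in\J: x_i\neq 0\}$ and taking
\begin{equation*}
\Big\|\sum_{i\in\J} a_iz_i\Big\|_Z=\max_{m\le n}\Big\|\sum_{i\in\tilde\J\cap\{m,\ldots,n\}} a_ix_i\Big\|_X+\Big(\sum_{i\in\J\setminus\tilde\J} a_i^2\Big)^{1/2},
\end{equation*}
which is a genuine norm, still bimonotone, and still satisfies $\|z_i\|=\|x_i\|$ for $i\in\tilde\J$. Correspondingly $T$ must be defined by $Tx=\sum_{i\in\tilde\J}f_i(x)z_i$ rather than by $(f_i(x))_{i\in\J}$: otherwise the added $\ell_2$ part $\big(\sum_{i\in\J\setminus\tilde\J}|f_i(x)|^2\big)^{1/2}$ need not be finite and the bound $\|T\|\le K$ would not follow from Remark~\ref{R:2.3}(b). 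This is also exactly why the statement asserts $T^*(z_i^*)=f_i$ only for $i$ with $x_i\neq0$; for the remaining indices one gets $T^*(z_i^*)=0$. A secondary remark: the completeness concern you raise at the end can be sidestepped entirely by taking $Z$ to be the abstract completion of $(\coo(\J),\|\cdot\|_Z)$, as the paper does; then $(z_i)$ is a bimonotone basic sequence whose span is dense by construction, hence a basis, and $S$ extends by continuity from $\coo(\J)$. The same caveats apply \emph{mutatis mutandis} to your treatment of part~(b).
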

\begin{proof}  (a) part ``$\Rightarrow$'' \,\,
 Assume that $(x_i,f_i)_{i\in\J}$ is a frame of $X$ and let $K$ be the projection constant of  $(x_i,f_i)_{i\in\J}$. We
put $\tilde \J=\{i\in\J: x_i\not=0\}$, denote the unit vector basis of $\coo(\J)$ by $(z_i)$
and define
 on $c_{00}(\J)$ the following norm $\|\cdot\|_Z$. 
\begin{equation}\label{E:2.4.1}
\Big\|\sum_{i\in\J} a_iz_i\Big\|_Z=\max_{m\le n} \Big\|\sum_{i\in\tilde\J\cap\{m,m+1,\ldots, n\}} a_ix_i\Big\|_X+
\Big(\sum_{i\in \J\setminus \tilde\J} a_i^2\Big)^{1/2}
\text{ for }(a_i)\subset \R.
\end{equation}
It follows easily that $(z_i)$ is a bimonotone basic sequence and, thus, a basis of  the completion of $\coo(\J)$ with respect
to $\|\cdot\|_Z$, which we denote by $Z$.

The map 
$$S:Z\to X,\quad \sum a_j z_j\mapsto  \sum a_j x_j,$$
is linear and bounded  with $\|S\|=1$. Secondly, define 
$$T:X\to Z,\quad x=\sum_{i\in\J} f_i(x) x_i =\sum_{i\in\tilde\J} f_i(x) x_i \mapsto \sum_{i\in\tilde\J} f_i(x) z_i.$$
Remark \ref{R:2.3} (b) yields for $x\in X$ 
\begin{align*}
 \Big\|\sum_{i\in\tilde\J} f_i(x) z_i\Big\|_Z&=\sup_{m\le n}
\Big\| \sum_{i\in\tilde\J\cap\{m,m+1,\ldots, n\}} f_i(x) x_i\Big\|_X\\
&=
\sup_{m\le n}
\Big\| \sum_{i\in\J\cap\{m,m+1,\ldots, n\}} f_i(x) x_i\Big\|_X\le K\|x\|, 
\end{align*}
and, thus,  that $T$ is linear and bounded with $\|T\|\le K$. Clearly it follows that $S\circ T=Id_X$,
 which implies that $T$ is an isomorphic embedding and that $S$ is a surjection.
Finally, if $(z_i^*)$ are the coordinate functionals of $(z_i)$ we deduce for $x\in X$ and $i\in\J$ that
$$T^*(z_i^*)(x)= z^*_i(T(x))=\begin{cases} f_i(x) &\text{ if $x_i\not=0$}\\
                                           0      &\text{ if $x_i=0$,}  \end{cases}    $$
which finishes the proof of  `` $\Rightarrow$''. 

In order to show the converse  in (a), assume that
$Z$ is a space with a basis $(z_i)_{i\in\J}$ and that $S:Z\to X$ is a bounded linear surjection, and $T: X\to Z$ 
 an isomorphic embedding, with $S\circ T=Id_X$. Put $x_i=S(z_i)$ and $f_i=T^*(z^*_i)$, for $i\in \J$. Then for $x\in X$,
$$x= S\circ T(x)=S\Big(\sum z^*_i(T(x)) z_i\Big)=\sum T^*(z^*_i)(x) S(z_i)= \sum f_i(x) x_i,$$
which implies that $(x_i,f_i)_{i\in\J}$ is a frame of $X$.

For the  proof of (b) we replace \eqref{E:2.4.1} by
\begin{equation}\label{E:2.4.2}
\Big\|\sum_{i\in\J} a_iz_i\Big\|_Z=
\max_{(\sigma_i)\subset \{\pm1\}} \Big\|\sum_{i\in\tilde\J} \sigma_i a_ix_i\Big\|_X+
\Big(\sum_{i\in \J\setminus \tilde\J} a_i^2\Big)^{1/2}.
\end{equation}
and note that  arguments  similar to those in the proof of (a)  yield (b).
\end{proof}

\begin{defin}\label{D:2.5} Let $(x_i,f_i)$ be    a frame of a Banach space $X$ 
and let  $Z$ be a space with a basis
 $(z_i)$ and   corresponding coordinate functionals $(z_i^*)$.
 We call $\big(Z,(z_i)\big)$ an
 {\em associated space to $(x_i,f_i)$} or
 a {\em sequence space associated  to $(x_i,f_i)$}    and $(z_i)$ an  {\em  associated basis},
 if
 \begin{align*}
 &S:Z\to X,\quad\sum a_i z_i\mapsto \sum  a_i x_i \text{ and }
 &T:X\to Z,\quad x=\sum f_i(x) x_i\mapsto \sum_{x_i\not=0}  f_i(x) z_i
\end{align*}
 are  bounded operators. We call $S$ the {\em associated
   reconstruction operator } and $T$ the  {\em  associated
   decomposition operator} or {\em analysis operator}.
   
  In this case, following \cite{Gr} we call  the triple $ \big((x_i),(f_i), Z\big)$ an {\em atomic decomposition} of $X$. 
   
\end{defin}
\begin{remark}\label{R:2.6a}
By Proposition \ref{P:2.4}  the property  of Banach space $X$ to admit a frame 
  is equivalent to the property of $X$ being  isomorphic to a complemented subspace of  a space
 $ Z$ with basis. It was shown independently  by  Pe{\l}czy{\'n}ski \cite{Pe}
 and Johnson, Rosenthal and Zippin \cite{JRZ} (see also \cite{Ca2}[Theorem 3.13]) that the later property is equivalent
 to  $X$ having the {\em Bounded Approximation Property}. we $X$ is said to have the {\em Bounded Approximation Property} if there is a $\lambda\ge 1$, so that for every $\vp>0$ and every compact set $K\subset X$ there is a finite rank operator  $T:X\to X$ with $\|T\|\le \lambda$ so that
$\|T(x)-x\|\le \vp$  whenever $x\in K$.  
\end{remark}

\begin{remark}\label{R:2.6} Let  $(x_j)_{j\in\J}$ be a 
Hilbert frame   of a Hilbert space $H$ and let $\Theta$ and $I$ be defined as in the paragraph following
   Definition \ref{D:2.1}.
 We choose
$Z$ to be $\ell_2(\J)$, 
$S=\Theta^*$ and
$$T=\Theta\circ I^{-1}: H\to\ell_2,\quad x\mapsto \sum_{j\in\J} \la I^{-1}(x), x_j \ra  e_j =  
  \sum_{j\in\J} \la x, I^{-1}( x_j )\ra  e_j$$
and observe that   $S\circ T=\Id_H$, and for $j\in\J$  it follows that
$S(e_j)=\Theta^*(e_j)=x_j$,  and
$$T^*(e_j)(x)= \la x, I^{-1} (x_j) \ra, \text{ ($x\in H$) and, thus }T^*(e_j)=  I^{-1}(x_j) .$$
Thus, if $(x_i)$ is a Hilbert frame, then $((x_i),(I^{-1}(x_i))$ is a Schauder frame for
 which  $Z=\ell_2(\J)$ together with its unit vector basis is an associated space. 
 
Conversely, let $(x_i,f_i)$ be a  Schauder frame of a Hilbert space $H$ and assume that
$Z=\ell_2(\J)$
 with its unit vector basis  is an associated space. Denote by  $T: H\to\ell_2(\J)$  and $S:\ell_2(\J)\to H$
the associated decomposition, respectively reconstruction operator. 
Then it follows that for all $x\in H$
$$\sum \la x_i,x\ra^2=\sum \la S(e_i),x\ra^2=\sum \la  e_i,S^*(x)\ra^2= \|S^*(x)\|^2.$$
Thus, since $S^*$ is an isomorphic embedding of $H$ into $\ell_2$, it follows that
 $(x_i)$ is a Hilbert frame.

\end{remark}

In the following observation  we show, that  we can always {\em expand a  frame by a 
bounded linear operator}.

\begin{prop}\label{P:2.7}
Let $(x_i,f_i)$ be a frame of a Banach space $X$ 
and let $Z$ be a space with a basis $(z_i)$ which is associated to  $(x_i,f_i)$.
Furthermore assume that $Y$ is another space with a basis  $(y_i)$
  and let  $V:Y\to X$ be linear and bounded.

Let $\Zt=Z\oplus_\infty Y$ and define $(\zt_i)\subset \Zt$, 
$(\xt_i)\subset X$ and $(\ft_i)\subset X^*$ by
$$
\zt_i=\!\begin{cases} (z_{i/2},\lambda_{i/2} y_{i/2})\\ (z_{(i+1)/2},-\lambda_{(i+1)/2}y_{(i+1)/2})\end{cases}
\xt_i=\!\begin{cases} x_{i/2}+\lambda_{i/2} V(y_{i/2}) &\text{ if $i$ even} \\ x_{(i+1)/2}-\lambda_{(i+1)/2}V(y_{(i+1)/2})&\text{ if $i$ odd,}\end{cases}$$
$$\ft_i=\!\begin{cases} \frac12 f_{i/2} &\text{ if $i$ even} \\ \frac12f_{(i+1)/2}&\text{ if $i$ odd,}\end{cases}
$$
where $\lambda_i=\|z_i\|/\|y_i\|$, for $i\in\N$.
Then $(\xt_i,\ft_i)$ is a frame of $X$, $(\zt_i)$ is a basis for $\Zt$ and
 $(\Zt,(\zt_i))$ is an associated space for  $(\xt_i,\ft_i)$.

\end{prop}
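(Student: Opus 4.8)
The plan is to identify the new reconstruction and decomposition operators explicitly in terms of the operators $S\colon Z\to X$, $T\colon X\to Z$ associated to $(x_i,f_i)$ and the given map $V\colon Y\to X$, and then verify the three assertions in the order: $(\zt_i)$ is a basis of $\Zt$; $(\Zt,(\zt_i))$ is an associated space; $(\xt_i,\ft_i)$ is a frame. It is convenient to relabel via the pairing $i=2j-1,2j$, so that $\zt_{2j-1}=(z_j,-\lambda_j y_j)$, $\zt_{2j}=(z_j,\lambda_j y_j)$, $\xt_{2j-1}=x_j-\lambda_j V(y_j)$, $\xt_{2j}=x_j+\lambda_j V(y_j)$ and $\ft_{2j-1}=\ft_{2j}=\tfrac12 f_j$. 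Two elementary identities drive everything: $\tfrac12(\zt_{2j-1}+\zt_{2j})=(z_j,0)$ and $\tfrac1{2\lambda_j}(\zt_{2j}-\zt_{2j-1})=(0,y_j)$, so that $\mathrm{span}(\zt_i)$ coincides with the span of the canonical interleaved basis of $Z\oplus_\infty Y$ and is therefore dense; and $\tfrac12 f_j(x)\xt_{2j-1}+\tfrac12 f_j(x)\xt_{2j}=f_j(x)x_j$.

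To show $(\zt_i)$ is a Schauder basis I would compute its partial sum operators $\Pb_n$ (I write $\Pb_n$ loosely for the $n$-th partial sum projection of $(\zt_i)$). Writing $P^Z_m,P^Y_m$ for the partial sum projections of $(z_i)$ and $(y_i)$ and letting $C$ be the larger of their basis constants, a direct computation gives $\Pb_{2m}=P^Z_m\oplus P^Y_m$ on $Z\oplus_\infty Y$, hence $\|\Pb_{2m}\|\le C$; and, since $\Pb_{2m+1}(w)=\Pb_{2m}(w)+(\text{the }\zt_{2m+1}\text{-coefficient of }w)\,\zt_{2m+1}$, one has $\Pb_{2m+1}=\Pb_{2m}+\pi_{m+1}\circ R_{m+1}$, where $R_{m+1}=\Pb_{2m+2}-\Pb_{2m}$ is the projection onto the $(m+1)$st block $E_{m+1}=\mathrm{span}\{(z_{m+1},0),(0,y_{m+1})\}$ and $\pi_{m+1}$ projects $E_{m+1}$ onto $\R\zt_{2m+1}$ along $\zt_{2m+2}$. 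The one genuinely delicate point is that $\|\pi_j\|\le 1$ for every $j$; this is exactly where the hypotheses are used, for after rescaling so that $\|z_j\|=\|y_j\|=1$ (hence $\lambda_j=1$) the block $E_j$ is isometric to $\ell_\infty^2$ with $\zt_{2j-1}\mapsto(1,-1)$, $\zt_{2j}\mapsto(1,1)$, whence $\|s\zt_{2j-1}+t\zt_{2j}\|=\max(|s+t|,|s-t|)=|s|+|t|\ge\|s\zt_{2j-1}\|$. Thus $\|\Pb_{2m+1}\|\le C+\|R_{m+1}\|\le C+2C=3C$, the $\Pb_n$ are uniformly bounded, and together with the density noted above this shows $(\zt_i)$ is a basis of $\Zt$.

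It remains to exhibit the associated operators. I would set $\St\colon\Zt\to X$, $(u,v)\mapsto S(u)+V(v)$, and $\Tt\colon X\to\Zt$, $x\mapsto(T(x),0)$; both are bounded because $\|(u,v)\|_{\Zt}=\max(\|u\|,\|v\|)$. From the definitions $\St(\zt_{2j-1})=S(z_j)-\lambda_j V(y_j)=\xt_{2j-1}$ and likewise $\St(\zt_{2j})=\xt_{2j}$, so $\St$ is the reconstruction operator. Applying the block projection $R_j$ to $(T(x),0)$ gives $\bigl((P^Z_j-P^Z_{j-1})(T(x)),0\bigr)=(f_j(x)z_j,0)=f_j(x)\cdot\tfrac12(\zt_{2j-1}+\zt_{2j})=\ft_{2j-1}(x)\zt_{2j-1}+\ft_{2j}(x)\zt_{2j}$, and summing in blocks yields $\Tt(x)=\sum_i\ft_i(x)\zt_i$, the series converging because $(\zt_i)$ is a basis and $(T(x),0)\in\Zt$; this is precisely Definition \ref{D:2.5}, so $(\Zt,(\zt_i))$ is an associated space for $(\xt_i,\ft_i)$. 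Finally, applying the bounded operator $\St$ to the convergent series $\Tt(x)=\sum_i\ft_i(x)\zt_i$ gives $x=\St(\Tt(x))=S(T(x))=\sum_i\ft_i(x)\xt_i$ with norm convergence, so $(\xt_i,\ft_i)$ is a frame of $X$.

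The main obstacle is the uniform boundedness of the odd partial sums $\Pb_{2m+1}$, which rests entirely on the estimate $\|\pi_j\|\le 1$ and hence on using the $\ell_\infty$-direct sum together with the normalization $\lambda_j=\|z_j\|/\|y_j\|$; the remaining steps are routine bookkeeping. (If $x_j=0$ for some $j$ one first replaces $f_j$ by $0$, which changes neither the frame nor the operator $T$, so that case requires no separate treatment.)
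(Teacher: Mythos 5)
Your proof is correct and follows essentially the same route as the paper's: you use the very same operators $\St(u,v)=S(u)+V(v)$ and $\Tt(x)=(T(x),0)$, the main difference being that you verify in detail the basis property of $(\zt_i)$ (which the paper dismisses as easy to check) via the block estimate $\|s\zt_{2j-1}+t\zt_{2j}\|\ge\|s\zt_{2j-1}\|$, and that you conclude by expanding $\Tt(x)$ in the basis $(\zt_i)$ and applying $\St$, rather than computing the coordinate functionals $\zt_i^*$ and citing Proposition \ref{P:2.4}. The only caveat, which is equally present in the paper's own computation of $\Tt^*(\zt_i^*)$, concerns degenerate indices with $x_j=0$ but $f_j\ne0$, where $z_j^*(T(x))$ need not equal $f_j(x)$; your parenthetical replaces $f_j$ by $0$ there, which in effect proves the statement for the correspondingly modified $\ft_i$, the same implicit reading the paper uses.
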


\begin{proof} Let $T:X\to Z$  and $S:Z\to X$  be the associated decomposition and reconstruction  operator, respectively.
Note that the operators 
\begin{align*}
\St: Z\oplus_\infty Y\to X,\quad (z,y)\mapsto S(z)+V(y)\text{ and }
\Tt:X\to Z\oplus_\infty Y,\quad x\mapsto (T(x),0)\end{align*}
are bounded and linear and that $\St\circ \Tt=Id_X$ and 
   $\St(\zt_i)=\xt_i$, for $i\in\N$. It is easy to verify that $(\zt_i)$ is a basis of $\Zt$
 for which its coordinate functionals $(\zt_i^*)$ are given by
 (denote the coordinate functionals of $(y_i)$ by $(y^*_i)$)
 $$\zt_i^*=\begin{cases} \frac12(z_{i/2}^*,\frac1{\lambda_i}y_{i/2}^*)&\text{ if $i$ even}\\  \frac12(z_{(i+1)/2}^*,-\frac1{\lambda_i}y_{(i+1)/2}^*)
&\text{ if $i$ odd.}\end{cases}$$
it follows for $x\in X$ that
$$\tilde T^*(\tilde z^*_i)(x)=\zt^*_i(\Tt(x))= \left\{ 
 \begin{matrix} f_{i/2}(x)/2 &\text{\ if $i$ even} \\ 
  f_{(i+1)/2}(x)/2 &\text{if $i$ odd}\end{matrix}\right\}=\ft_i(x),$$

which yields $\Tt^*(\zt_i^*)=\ft_i$,   for $i\in\N$.
Thus, the claim follows from Proposition \ref{P:2.4}.

\end{proof}

\section{Three Examples}\label{S:3}

In \cite{DOSZ} the following notion of quantization was introduced and studied for non redundant 
 systems.

\begin{defin}\label{D:3.1} Let $(x_i)_{i\in\J}$ be a 
 fundamental system for $X$, with $\J=\N$ or $\J=\{1,2,\ldots N\}$, for some
 $N\in\N$ (i.e. $\overline{\text{span}(e_i:i\in\J)}=X$) and let $\varepsilon>0$ and $\delta>0$ be given.
We say that $(x_i)_{i\in\J}$ has the $(\varepsilon,\delta)$-\textit{Net
Quantization Property} (abbr.\ $(\varepsilon,\delta)$-NQP) if for
 any
 $x = \sum_{i\in E} a_ix_i \in X$, $E\subset \J$ finite,
  there exists a sequence $(k_i) \subset \mathbb{Z}$,
  $\supp(k_i)=\{i\in\N: k_i\not=0\}$   is finite, such that
\begin{equation}\label{E:3.1}
 \Big\|x - \sum k_i \delta x_i\Big\| \le \varepsilon. \end{equation}
We say that $(x_i)$ has the NQP if $(x_i)$ has the $(\varepsilon,
\delta)$-NQP for some $\varepsilon>0$ and $\delta>0$.
\end{defin}
When we ask whether or not in a certain representation of vectors the coefficients
can be replaced by quantized coefficients, we are often interested in memorizing 
data as economically as possible, and reconstructing them with as little error as possible.
 With this in mind, we will exhibit in this section several examples, which show that
  it is not always meaningful to apply the notion of NQP word for word to
  redundant systems like frames.  These examples will  then also guide
  us to more appropriate  quantization concepts for frames.

The first example is a {\em tight} Hilbert frame $(f_i)$ in $S_{\ell_2}$ (i.e. $a=b$)
consisting of normalized vectors so that 
  for every $x\in\ell_2$ there is a sequence $(k_i)\subset \Z$
  so that
  $\|x-\sum_{i\in\N} k_i f_i\|<1$.
The second example is  a  semi-normalized Hilbert frame $(f_i)$ in $\ell_2$ which
has the property that for every  $x\in\ell_2$ there is a sequence
$(k_i)\subset \Z$
  so that
  $\|x-\sum_{i\in\N} k_i f_i\|<1$ and $(k_i)$ has the additional
  property that $\max_{i\in\N} |k_i|\le 1$, if $x\in B_{\ell_2}$.
 The third example is a Schauder frame  $(f_i)$ of $\ell_2$ which
 has the property that for every  $x\in\ell_2$ there is a sequence
$(k_i)\subset \Z$, so that not only   $\max_{i\in\N} |k_i|\le
\|x\|$
  and
  $\|x-\sum_{i\in\N} k_i f_i\|<1$, but so that also
   the support of $(k_i)$, i.e. the set
    $\{i\in\N: k_i\not=0\}$ is uniformly bounded.
\begin{ex}\label{Ex:3.2} Let $0<\vp_i<1/2$. For $i\in\N$
define  the following vectors $f_{2i-1}$ and  $f_{2i}$ in
$S_{\ell_2}$.
\begin{equation*}
f_{2i-1}=\sqrt{1-\vp_i^2} e_{2i-1}+\vp_i e_{2i}\text{ and }
f_{2i}=-\vp_i e_{2i-1} +\sqrt{1-\vp_i^2} e_{2i}.
\end{equation*}
Clearly $(f_i)$ is an orthonormal basis for $\ell_2$ and we let
$\cF=\{e_i:i\in\N\}\cup\{f_i:i\in\N\}$.
Then $\cF$ is a tight frame (as is  any  finite union of orthonormal
bases) and the sequence $(z_i)$ with
\begin{align*}
z_{2i-1}&=e_{2i-1}-f_{2i-1}=(1-\sqrt{1-\vp_i^2}) e_{2i-1}-\vp_i e_{2i}\\
z_{2i}&=e_{2i}-f_{2i} =\vp_i e_{2i-1}+(1-\sqrt{1-\vp_i^2})
e_{2i},\text{ for $i\in\N$ }
\end{align*}
is an orthogonal basis and $\|z_{2i-1}\|=  \|z_{2i}\|=O(\vp_i) $.
Thus, if $(\vp_i)$ converges fast enough
 to 0 it follows that for any $x\in\ell_2$ there is a family $(k_i)\subset \Z$, with $|\supp(k_i)|<\infty$,
so that 
$$\Big\|x-\sum k_i z_i\|= \Big\|x-\sum k_i (e_i-f_i)\Big\|<1.$$
\end{ex}

\begin{ex}\label{Ex:3.3} Our second example is  a semi-normalized  Hilbert frame 
 $(x_i)$  in $\ell_2$  so that  
$$D=\big\{\sum_{i=1}^\infty k_i x_i : k_i\in\{-1,0,1\} \text{ and }\{i:k_i\not=0\} \text{ is finite}\big\}$$
is dense in $B_{\ell_2}$.

 Put $ (c_i)_{i=1}^\infty=(1/2^i)$ and
  partition the unit vector basis $(e_i)$ of $\ell_2$
into infinitely many subsequences of infinite length, say
$(e(i,j)\!:\!i,j\!\in\!\N)$.
 Then
 our frame $(f_k)$  is defined to be the sequence:
 \begin{align*}
   &f_1=c_1e_1+e(1,1),f_2=e(1,1),\\
 &f_3=c_1e_2+e(1,2), f_4=e(1,2), f_5=c_2 e_1+e(2,1),f_6=e(2,1),\\
 &f_7\!=\!c_1 e_3+e(1,3),f_8\!=\!e(1,3),f_9\!=\!c_2 e_2+e(2,2),\\
 & \qquad\qquad \qquad\qquad  f_{10}\!=\!e(2,2),f_{11}\!=\!c_3 e_1+e(3,1),f_{13}\!=\!e(3,1),\\
 &f_{14}=c_1 e_4+e(1,4),f_{15}=e(1,4),\ldots,f_{20}=c_4 e_1+e(4,1),f_{21}=e(4,1),\\
 &\vdots
\end{align*}
Note that  the set of vectors  $x\in B_{\ell_2} $ of the form
$$x=\sum_{i,j\in\N} \vp(i,j) c_i e_j=\sum_{i,j\in\N} \vp(i,j)\big(c_i e_j +e(i,j)\big) -  \vp(i,j) e(i,j) $$
where $(\vp(i,j))\subset\{-1,0,1\}$ so that the set $\{i,j\in\N:
\vp(i,j)\not=0\}$ is finite, are dense in $B_{\ell_2}$.
 This implies that  every  $x\in B_{\ell_2}$ is the limit of vectors $(x_n)$  with
 $$(x_n)\subset \big\{\sum \vp_i f_i: (\vp_i)\subset\{-1,0,1\},  \{i\in\N: \vp_i\not=0\} \text{ is finite}\big\}.$$

The sequence $(f_n)$ is a frame. Indeed for any $x=\sum x_i e_i\in
\ell_2$ we have
\begin{align*}
\sum\langle f_i,x\rangle^2&=\sum_{i\text{ even}}\langle f_i,x\rangle^2+\sum_{i\text{ odd}}\langle f_i,x\rangle^2=
\sum x^2_i+\sum_{i,j\in\N} (c_i x_j+ \langle e(i,j),x\rangle)^2\\
&\begin{cases} \ge  \|x\|^2\\
\le \sum x^2_i+\sum_{i,j\in\N} 2c_i^2 x_j^2+ 2\langle
e(i,j),x\rangle^2\le (3+2\sum c_i^2)\|x\|^2.
\end{cases}
\end{align*}
\end{ex}

\begin{ex}\label{Ex:3.4}
We construct  a Schauder frame $(x_i,f_i)$  of
$\ell_2$ so that  $(x_n)$ is dense in $B_{\ell_2}$.

Let $(z_n)$ be dense in $B_{\ell_2}$ and choose for each $n\in\N$
$$x_{2n-1}=z_n+e_n, x_{2n}=z_n, f_{2n-1}=e_n \text{ and } f_{2n}=-e_n.$$
Clearly, for every $x\in\ell_2$ 
   $$x=\sum\langle e_i, x \rangle  e_i=\sum \langle f_{2n-1},x\rangle x_{2n-1}+
   \langle f_{ 2n},x\rangle x_{2n}$$
   (the above sum is conditionally converging).
   It follows that $\big((x_n),(f_n)\big)$ is a Schauder frame of
   $\ell_2$. It is clear that $(x_n)$ is  not a Hilbert
    frame.
\end{ex}

\begin{remark} All three examples satisfy the conditions (NQP)
 if we extend this notion word for word to frames. Nevertheless
  these examples do not satisfy our understanding of what
  quantization of coefficients should mean.

 In  Example \ref{Ex:3.2}  every $x\in\ell_2$ can be approximated by an
expansion with respect to a frame
 using only integer coefficients, but these coefficients might
 get arbitrarily large for elements in $B_{\ell_2}$. This means
   that we would need an infinite  alphabet to approximate
    vectors which are in $B_{\ell_2}$.
 Therefore it is not enough (as in the non redundant case) to assume that our frame is semi-normalized.
 
In the Examples \ref{Ex:3.3} and \ref{Ex:3.4} we achieve the
approximation of any vector in $\ell_2$ by  a quantized expansion whose
coefficients are bounded by a fixed multiple of the norm of the
vector, but in order to approximate even the vectors of a given
finite dimensional subspace
 (for example the space generated by  two elements of the unit vector basis
  elements of $\ell_2$) we need an infinite dictionary.
\end{remark}

\section{Quantization with $Z$-bounded coefficients}\label{S:4}

One way to
 avoid  examples like the ones mentioned  in Section \ref{S:3} 
 is to impose boundedness  conditions on the quantized coefficients 
 within an associated space $Z$.

\begin{defin}\label{D:4.1}
Assume  $(x_i,f_i)_{i\in\J}$, $\J=\N$ or $\J=\{1,2\ldots N\}$, for some $N\in\N$, is a frame of
 a Banach space $X$.  Let $Z$ be a space with  basis $(z_i)$ which is associated to   $(x_i,f_i)_{i\in\J}$.
Let $\vp,\delta>0$, $C\ge 1$.

We say that $(x_i,f_i)$ {\em satisfies  the  $(\vp,\delta,C)$-Net Quantization Property  with respect to} $(Z,(z_i))$  
 or $(\vp,\delta,C)$-$Z$-NQP, if for all $x\in X$ there exists  a sequence $(k_i)_{i\in\J}\subset \Z$ with finite support  
 so that
\begin{equation}\label{E:4.1.1}
\Big\|\sum k_i \delta z_i\Big\|_Z\le C\|x\| \text{ and }\Big\|x-\sum \delta k_i x_i\Big\|_X\le\vp.
\end{equation}

We say that $(x_i,f_i)$ {\em satisfies the NQP  with respect to $(Z,(z_i))$ } if  it satisfies  the
   $(\vp,\delta,C)$-$Z$-NQP for some choice of $\vp,\delta>0$ and $C\ge 1$.
\end{defin}

It is easy to see that the property $(\vp,\delta,C)$-NQP with respect to some associated space is 
homogenous in $(\vp,\delta)$, meaning that  a frame $(x_i,f_i)$  is  $(\vp,\delta,C)$-NQP if and only
if for some $\lambda>0$ (or for all $\lambda$) $(x_i,f_i)$  satisfies the   $(\lambda\vp,\lambda\delta,C)$-NQP.
The following result, analogous  to \cite[Theorem 2.4]{DOSZ},  shows that 
 it is enough to verify that one can quantize the coefficients of elments $x$ which are in $B_X$ to deduce the
 NQP.  
 
\begin{prop}\label{P:4.2} 
Assume that $(x_i)$ and $(z_i)$ are some sequences in Banach spaces $X$ and $Z$, respectively,
and assume that there are $C_0<\infty$, $\delta_0>0$  and  $0<q_0<1$, so that
 for all $x\in B_X$ there is a sequence $(k_i)\subset \Z$, $(k_i)\in\coo$  with
\begin{equation}\label{E:4.2.1}
\Big\| \sum \delta_0k_i z_i\Big\|\le C_0\text{ and } 
\Big\|x-\sum \delta_0 k_i x_i\Big\| \le q_0.
\end{equation}
Then  there are $\delta_1>0$, and $C_1<\infty$ only depending on 
 $\delta_0$, $q_0$ and $C_0$  so that 
 for all $x\in X$ there is a sequence $(k_i)\subset \Z$, $(k_i)\in\coo$,  with
\begin{equation}\label{E:4.2.1a}
\Big\| \sum \delta_1k_i z_i\Big\|\le C_1\|x\|\text{ and } 
\Big\|x-\sum \delta_1 k_i x_i\Big\| \le 1.
\end{equation}

\end{prop}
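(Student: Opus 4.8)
The plan is to establish \eqref{E:4.2.1a} by a greedy iteration: at each stage I normalise the current residual so that it lies in $B_X$, quantize it via the hypothesis \eqref{E:4.2.1}, subtract off the quantized piece, and repeat. Two bookkeeping adjustments are needed to force all coefficients into one fixed lattice. First, since $\delta_0\Z\subset(\delta_0/M)\Z$ for every $M\in\N$, the hypothesis \eqref{E:4.2.1} continues to hold with $\delta_0$ replaced by $\delta_0':=\delta_0/M$ (replacing each $k_i$ by $Mk_i$), with the same $q_0$ and $C_0$; I would fix $M$ large enough that $q_0\delta_0'/(1-q_0)\le\tfrac12$ and put $\delta_1:=(\delta_0')^2$. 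Second, I will round each normalising factor up to a multiple of $\delta_0'$.

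Now fix $x\in X$; I may assume $\|x\|>1$, since otherwise $k_i\equiv 0$ already satisfies \eqref{E:4.2.1a}. Set $v_0=x$ and $t_0=\|v_0\|$. Given $v_{j-1}$ with $t_{j-1}:=\|v_{j-1}\|>1$, let $\tilde t_{j-1}:=\delta_0'\lceil t_{j-1}/\delta_0'\rceil$, so that $t_{j-1}\le\tilde t_{j-1}<t_{j-1}+\delta_0'$; apply \eqref{E:4.2.1} to $v_{j-1}/\tilde t_{j-1}\in B_X$ to obtain a finitely supported $(k^{(j)}_i)\subset\Z$ with $\bigl\|\sum_i\delta_0'k^{(j)}_iz_i\bigr\|\le C_0$ and $\bigl\|v_{j-1}/\tilde t_{j-1}-\sum_i\delta_0'k^{(j)}_ix_i\bigr\|\le q_0$; and set $v_j:=v_{j-1}-\tilde t_{j-1}\sum_i\delta_0'k^{(j)}_ix_i$. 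Then $t_j=\|v_j\|\le\tilde t_{j-1}q_0\le q_0t_{j-1}+q_0\delta_0'$, and iterating this gives $t_j\le q_0^jt_0+q_0\delta_0'/(1-q_0)\le q_0^jt_0+\tfrac12$; hence as soon as $q_0^jt_0\le\tfrac12$ one has $t_j\le 1$ and the iteration stops, say after $J<\infty$ steps.

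Finally I would collect the estimates. The coefficient added to $x_i$ at step $j$ is $\tilde t_{j-1}\delta_0'k^{(j)}_i=(\delta_0')^2\lceil t_{j-1}/\delta_0'\rceil\,k^{(j)}_i\in\delta_1\Z$, so defining $k_i$ by $\delta_1k_i=\sum_{j=1}^J\tilde t_{j-1}\delta_0'k^{(j)}_i$ yields integers $k_i$ with finite support (a finite sum of finitely supported sequences), and by construction $\|x-\sum_i\delta_1k_ix_i\|=\|v_J\|\le1$. For the $Z$-bound, the triangle inequality gives $\bigl\|\sum_i\delta_1k_iz_i\bigr\|\le\sum_{j=1}^J\tilde t_{j-1}\bigl\|\sum_i\delta_0'k^{(j)}_iz_i\bigr\|\le C_0\sum_{j=0}^{J-1}\tilde t_j$; using $\tilde t_j<t_j+\delta_0'<q_0^jt_0+2$ for $j<J$ together with the bound $J<1+\log(2t_0)/\log(1/q_0)$ forced by $t_{J-1}>1$, one gets $\sum_{j=0}^{J-1}\tilde t_j<t_0/(1-q_0)+2J\le C_1'\,t_0$, where $C_1'$ depends only on $q_0$ (since $t_0=\|x\|>1$ and $\log(2t)/t$ is bounded for $t\ge1$). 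Thus \eqref{E:4.2.1a} holds with $\delta_1=(\delta_0/M)^2$ and $C_1=C_0C_1'$, each depending only on $\delta_0,q_0,C_0$.

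The part that takes care, and which dictates both adjustments above, is keeping every coefficient in a single lattice $\delta_1\Z$ while still forcing the residuals to contract. Rescaling the residual by the exact real number $t_{j-1}$ would destroy quantization outright; rescaling instead by a power of a fixed integer base $\ge 2$ keeps quantization but only contracts the residual when $q_0<\tfrac12$, which we cannot assume. Rounding $t_{j-1}$ up to a multiple of the refined step $\delta_0'$ is the compromise: it costs only an additive error $O(\delta_0')$ per step, absorbed by taking $M$ large, and produces coefficients in $(\delta_0')^2\Z$. The rest — termination of the iteration, finiteness of the support, and the geometric-series estimates for the error and the $Z$-norm — is routine.
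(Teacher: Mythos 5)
Your proof is correct, and it rests on the same basic idea as the paper's: greedily subtract a quantized approximation of the current residual (rescaled into $B_X$ so the hypothesis applies), iterate until the residual has norm at most $1$, and sum the geometric series of $Z$-norm contributions. What differs is the arithmetic device used to keep every contribution on a single lattice $\delta_1\Z$. The paper fixes $\delta_1=\delta_0/n_1$, lets the working scale $\delta\le\delta_1$ float, and picks an integer $n$ with $\delta_0/(n+1)<\delta\le\delta_0/n$; applying the hypothesis to $\tfrac{\delta_0}{\delta(n+1)}x$ and undoing the rescaling multiplies each coefficient by the \emph{integer} $n+1$, at the cost of a factor $(n+1)/n\le(n_1+1)/n_1$ that is tamed by taking $n_1$ large so that $q_1:=\tfrac{n_1+1}{n_1}q_0<1$. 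You instead refine the lattice to $\delta_0'=\delta_0/M$, take $\delta_1=(\delta_0')^2$, and round each normalising factor $t_{j-1}$ \emph{up} to a multiple $\tilde t_{j-1}$ of $\delta_0'$, so that $\tilde t_{j-1}\cdot\delta_0'k^{(j)}_i\in\delta_1\Z$; the rounding costs an additive $O(\delta_0')$ per step, tamed by taking $M$ large so that $q_0\delta_0'/(1-q_0)\le\tfrac12$. Your version avoids the paper's two-layer structure (an auxiliary claim followed by a nested induction followed by a final renormalisation) in favour of a single loop with an explicit stopping rule and an explicit bound $J\lesssim\log\|x\|$, at the minor price of a smaller $\delta_1$ (quadratic rather than linear in $\delta_0$). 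Both arguments are purely metric and use nothing beyond the triangle inequality.
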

\begin{proof} Choose $n_1\in\N$ and $q_1$ so that
\begin{equation}\label{E:4.2.2}
\frac{n_1+1}{n_1} q_0=q_1<1.
\end{equation}
and put $\delta_1=\delta_0/n_1$.

We first claim that for any $0<\delta\le\delta_1$ and any $x\in B_X$
there is a sequence $(k_i)\in \Z^\N\cap \coo$ so that
\begin{equation}\label{E:4.2.3}
\Big\| \sum k_i \delta z_i\Big\|\le   2C_0\text{ and }
\Big\|x-\sum \delta k_i x_i\Big\| \le q_1.
\end{equation}
Indeed, let $\delta\le \delta_1$ and $x\in B_X$ and choose
$n\ge n_1$ in $\N$ so that 
$\frac{\delta_{0}}{n+1}<\delta\le\frac{\delta_0}n$ and $(k_i)\subset \Z$ so that
$$ \Big\| \sum k_i \delta_0 z_i\Big\|\le C_0\text{ and }
\Big\| \frac{\delta_0}{\delta(n+1)} x-\sum k_i \delta_0 x_i\Big\|\le q_0
$$
and, thus,  since $n\ge n_1$,
$$ \Big\| \sum k_i (n+1) \delta z_i\Big\|= 
 \Big\| \sum k_i \delta_0 z_i\Big\| \frac{\delta (n+1)}{\delta_0}\le \frac{n+1}{n} C_0\le2C_0$$
and 
$$\Big\|  x
-\sum k_i \delta (n+1) x_i\Big\|\le 
\Big\| \frac{\delta_0}{(n+1)\delta} x-\sum k_i \delta_0 x_i\Big\| \frac{\delta(n+1)}{\delta_0}\le
q_0\frac{\delta}{\delta_0}(n+1)\le q_1.$$

By induction on $n\in\N$ we show that for any $\delta\le q^{n-1}_1\delta_1$ and any $x\in B_X$ there is 
a
 $(k_i)\subset \Z$, $(k_i) \in c_{00}$, so that
\begin{equation}\label{E:4.2.4}
 \Big\| \sum k_i \delta z_i\Big\|\le 2C_0 \sum_{i=0}^{n-1} q^i_1\text{ and }
\Big\|  x-\sum k_i \delta x_i\Big\|\le q_1^n.
\end{equation}
For $n=1$ this is just \eqref{E:4.2.3}. Assume our claim to be true for $n$ 
and let $\delta \le \delta_1q^n_1$ and $x\in B_X$.  By our induction hypothesis, we can find
$(k_i)\subset \Z$, $(k_i)\in c_{00}$, so that
$$ \Big\| \sum k_i \delta z_i\Big\|\le 2C_0 \sum_{i=0}^{n-1} q^i_1\text{ and }
\Big\|  x-\sum k_i \delta x_i\Big\|\le q_1^n,$$
Since ${q_1^{-n}} \big( x-\sum k_i \delta x_i)\in B_X$ and since $\delta q_1^{-n}\le \delta_1$,
we can use our first claim and choose 
$(\tilde k_i)\in \Z^\N\cap \coo$ so that
\begin{equation*}
\Big\| \sum \tilde k_i \delta q_1^{-n} z_i\Big\|\le   2C_0\text{ and }
\Big\|{q_1^{-n}}\Big(x-\sum k_i \delta x_i\Big)-\sum \delta  q_1^{-n}\tilde k_i x_i\Big\| \le q_1,
\end{equation*}
and, thus
$$ \Big\| \sum (k_i+\tilde k_i) \delta z_i\Big\|\le 2C_0 \sum_{i=0}^{n-1} q^i_1+ 2C_0q_1^n,$$
and
$$\Big\|x-\sum \delta k_i  x_i-\sum \delta \tilde k_i x_i\Big\| \le q_1^{n+1},$$
which finishes the induction step.

Now define $C_1=2C_0\sum_{n=0}^\infty q^{n}_1$
 and let $x\in X$ be arbitrary.

If $\|x\|\ge 1$ (this is the only case left to consider) we choose $n\in\N$   with  
$q^{n}_1< \frac1{\|x\|}\le   q_1^{n-1}$
 and, by \eqref{E:4.2.4} we can choose $(k_i)\in \Z^\N\cap \coo$ so that
$$\Big\|\sum k_i \frac{\delta_1}{\|x\|} z_i\Big \|\le 2C_0 \sum_{i=0}^{n-1} q_1^i\le C_1
\text{ and }
\Big\|\frac{x}{\|x\|}-\sum \frac{k_i\delta_1}{\|x\|} x_i\Big\|\le q_1^n,$$
which yields 
$$\Big\|\sum k_i \delta_1 z_i\Big \|\le C_1q_1\|x\|\text{ and }
\Big\|x-\sum k_i \delta_1 x_i\Big \|\le  q_1^n\|x\|\le 1.$$
\end{proof}

In the following result we consider a finite frame $(x_i,f_i)_{i=1}^N$ of  a finite dimensional Banach space $X$, and exploit the fact that, if  $(x_i,f_i)_{i=1}^N$  has the  $(\vp,\delta_, C)$-NQP with respect
to some space $Z$ having a basis $(z_i)_{i=1}^N$, then  the value
$$\vp^{-n}=\Vol(B_X)/\Vol(\vp B_X) $$
must be smaller then the cardinality of the set
$$\cF_{(\delta,C)}(x_i)=\Big\{ \sum n_j \delta x_j: \Big\|\sum n_j \delta z_j\Big\|\le C\Big\}.$$
 
\begin{prop}\label{P:4.3} Assume $f,g:(0,\infty)\to(0,\infty)$ are strictly increasing functions so that
\begin{equation}\label{E:4.3.0}
\lim_{n\to\infty} f(n)=\infty \text{ and } \lim_{n\to\infty} \frac{g(n)\ln n}{n}=0,
\end{equation}
and  $C,B,R\ge 1$, and  $0\le \delta,\vp<1$
.

 Assume that $(x_i,f_i)_{i=1}^N$ is a frame of a Banach space  $X$ with 
$\dim(X)=n<\infty$ and that $Z$ is an $N$-dimensional space, $N\in\N$,  with basis $(z_i)_{i=1}^N$, which is associated to
$(x_i,f_i)_{i=1}^N$. Let $T:X\to Z$ and $S:Z\to X$ be the associated decomposition and reconstruction operator, 
denote  by $K_Z$  the projection constant of $(z_i)$.

Assume
\begin{align}
&\label{E:4.3.1}  \cF_{(\delta,C)}(x_i) \text{ is $\vp$-dense in } B_X, \text{ and }\\
&\label{E:4.3.2} f(\#A)\le \Big\|\sum_{j\in A} n_j z_j\Big\|,  \text{ whenever $A\subset\{1,2\ldots N\}$ and
 $(n_j)_{j\in A}\!\subset\! \Z\setminus\{0\}$}.
\end{align}
Then 
\begin{equation}
\label{E:4.3.3} \ln N\ge \frac{n\ln(1/\vp)}{f^{-1}(C/\delta)} -\ln \Big(\frac{4 K_Z C}{\delta} +1\Big).
\end{equation}

In addition there is an $n_0\in\N$ (depending on    $C$, $B$, $R$,  $\vp$, $\delta$, $f$ and $g$) so that if, moreover, $K_Z\le B$, $\|S\|\le R$ and
\begin{align}
&\label{E:4.3.4}  
 g(\#A)\ge \sup \Big\|\sum_{j\in A}  \pm z_j\Big\|,  \text{ whenever $A\subset\{1,2\ldots N\}$},
\end{align}
then $n\le n_0$.
\end{prop}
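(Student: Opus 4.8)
The plan is to prove the two assertions separately. The inequality \eqref{E:4.3.3} is a direct volume/counting estimate; the bound $n\le n_0$ comes from intersecting the information in \eqref{E:4.3.3} with a second, opposing volume estimate squeezed out of the upper estimate \eqref{E:4.3.4}.

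\textbf{Proof of \eqref{E:4.3.3}.} Put $\kappa=f^{-1}(C/\delta)$. First I would record the two facts that drive the count. Since $\cF_{(\delta,C)}(x_i)$ is $\vp$-dense in $B_X$ we have $B_X\subseteq\bigcup_{y\in\cF_{(\delta,C)}(x_i)}(y+\vp B_X)$, so comparing volumes gives $\Vol(B_X)\le\#\cF_{(\delta,C)}(x_i)\cdot\vp^n\Vol(B_X)$, i.e.
\[
\#\cF_{(\delta,C)}(x_i)\ge\vp^{-n}.
\]
For the upper bound, note every element of $\cF_{(\delta,C)}(x_i)$ has the form $\delta\sum_{j\in A}n_jx_j$ with $A=\supp(n_j)$, $n_j\in\Z\setminus\{0\}$ and $\bigl\|\delta\sum_{j\in A}n_jz_j\bigr\|_Z\le C$. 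Applying \eqref{E:4.3.2} to this vector gives $\delta f(\#A)\le C$, hence $\#A\le\kappa$; applying it to the singleton $\{j\}$ gives $\|z_j\|\ge f(1)$. Since $|n_j|\,\delta\|z_j\|=\bigl\|(P_j-P_{j-1})\bigl(\delta\sum_i n_iz_i\bigr)\bigr\|\le K_ZC$, we get (after the harmless normalisation $\|z_i\|\ge1$, or by absorbing $f(1)$ into the constants) $|n_j|\le 2K_ZC/\delta$. Thus each element of $\cF_{(\delta,C)}(x_i)$ can be specified by $\kappa$ pairs $(j,m)$ with $j\in\{1,\dots,N\}$ and $m\in\Z$, $|m|\le 2K_ZC/\delta$ (allowing $m=0$ to record a shorter support), so
\[
\#\cF_{(\delta,C)}(x_i)\le\Bigl(N\Bigl(\tfrac{4K_ZC}{\delta}+1\Bigr)\Bigr)^{\kappa}.
\]
Combining, taking logarithms and dividing by $\kappa$ yields \eqref{E:4.3.3}.

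\textbf{Proof that $n\le n_0$.} Now assume also $K_Z\le B$, $\|S\|\le R$ and \eqref{E:4.3.4}. Put $\kappa=f^{-1}(C/\delta)$, $L=\lfloor 2BC/\delta\rfloor$, and $D=\mathrm{conv}\{\pm x_j:j\le N\}$. For a quantized vector $v=\delta\sum_{j\in A}n_jx_j\in\cF_{(\delta,C)}(x_i)$ (so $\#A\le\kappa$, $|n_j|\le L$) the layer‑cake identity $n_j=\sum_{l=1}^{L}\bigl(\mathbf 1_{\{n_j\ge l\}}-\mathbf 1_{\{n_j\le-l\}}\bigr)$ writes $v=\delta\sum_{l=1}^{L}\bigl(\sum_{j\in A_l^+}x_j-\sum_{j\in A_l^-}x_j\bigr)$ with disjoint $A_l^\pm\subseteq A$; by \eqref{E:4.3.4} and $\|S\|\le R$ each inner block has norm $\le Rg(\kappa)$, and averaging the $L$ blocks (each of which lies in $\kappa D$, being $\kappa$ times an average of $\le\kappa$ points of $\{\pm x_j,0\}$) gives $\cF_{(\delta,C)}(x_i)\subseteq\delta L\kappa\,D$. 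Since $\cF_{(\delta,C)}(x_i)$ is $\vp$-dense in $B_X$, we have $B_X\subseteq\delta L\kappa D+\vp B_X$; testing against $\phi\in X^*$ gives $\|\phi\|\le\delta L\kappa\max_j|\phi(x_j)|+\vp\|\phi\|$, i.e. $\max_j|\phi(x_j)|\ge\theta\|\phi\|$ with $\theta:=\tfrac{1-\vp}{\delta L\kappa}$; equivalently $D\supseteq\theta B_X$, hence $\Vol(D)\ge\theta^n\Vol(B_X)$. On the other hand $\|x_j\|\le R\|z_j\|\le Rg(1)$ by \eqref{E:4.3.4}, so $D\subseteq Rg(1)B_X$; and, crucially, the same layer‑cake trick gives $\bigl\|\sum_j a_jx_j\bigr\|\le R\|a\|_\infty g(\#\supp a)$ for all $a\in\R^N$, which forces $D$ to be a ``flat'' body. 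This should yield an upper bound $\Vol(D)\le\psi(n)^{-n}\Vol(B_X)$ with $\psi(n)\to\infty$, the divergence of $\psi$ being exactly where the hypothesis $\lim_{m\to\infty}g(m)\ln m/m=0$ is used. Comparing the two volume bounds gives $\delta L\kappa\ge(1-\vp)\psi(n)$, which is impossible once $n>n_0$ for some $n_0=n_0(C,B,R,\vp,\delta,f,g)$.

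\textbf{Main obstacle.} The routine step is \eqref{E:4.3.3}. The real work is the last paragraph: converting the single combinatorial hypothesis \eqref{E:4.3.4}, together with the mild growth condition $g(m)\ln m/m\to0$, into a genuine volume bound $\Vol(D)\le\psi(n)^{-n}\Vol(B_X)$ with $\psi\to\infty$ — that is, proving that a family $(x_j)$ whose signed partial sums are uniformly controlled by $g(\#\,\cdot\,)$ cannot have an absolutely convex hull comparable to $B_X$ in high dimension. This needs a genuinely convex‑geometric input (a Khintchine‑type averaging, a cotype argument, or a volume estimate for zonotopes / absolute convex hulls of families with bounded $\ell_\infty$‑weighted partial sums), and getting the dependence of $\psi$ on $g$ precise enough that the clash with the lower bound $\Vol(D)\ge\theta^n\Vol(B_X)$ occurs at an $n_0$ depending only on $C,B,R,\vp,\delta,f,g$ is the crux.
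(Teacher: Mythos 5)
Your first half is fine: the argument you give for \eqref{E:4.3.3} is essentially the paper's own proof (volume comparison gives $\#\cF_{(\delta,C)}(x_i)\ge\vp^{-n}$, while \eqref{E:4.3.2} bounds the support size by $f^{-1}(C/\delta)$ and the projection constant bounds each coefficient by roughly $K_ZC/\delta$, so $\#\cF_{(\delta,C)}(x_i)\le\bigl[N\bigl(\tfrac{2K_ZC}{\delta}+1\bigr)\bigr]^{f^{-1}(C/\delta)}$; the slack in the constant $\tfrac{4K_ZC}{\delta}+1$ absorbs the normalisation issue with $\|z_j\|\ge f(1)$ that you mention). Likewise your containments $\cF_{(\delta,C)}(x_i)\subseteq\delta L\kappa D$ and $D\supseteq\theta B_X$ are correct.

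The second half, however, has a genuine gap, and you flag it yourself: the whole content of the assertion $n\le n_0$ has been compressed into the unproved claim that $\Vol(D)\le\psi(n)^{-n}\Vol(B_X)$ with $\psi(n)\to\infty$, where $D=\mathrm{conv}\{\pm x_j\}$. This is not a routine step and cannot follow from soft considerations: for $X=\ell_1^n$ and $x_j=e_j$ one has $D=B_X$, so no such bound holds for absolute convex hulls in general; any proof must exploit \eqref{E:4.3.4} quantitatively (in that example any admissible $g$ grows linearly), and you give no mechanism converting the $g$-control of signed sums into volume decay of $D$, nor any way of tracking how $\psi$ depends on $g$ — which is exactly what $n_0(C,B,R,\vp,\delta,f,g)$ demands. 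The paper avoids convex geometry here entirely and instead refines the counting argument of the first half: fix an Auerbach basis $(e_i,e_i^*)_{i=1}^n$ of $X$, choose $\eta$ with $\vp(1+1/\eta)<1$, and set $A_i=\{j\le N:|e_i^*(x_j)|\ge\vp/(\eta K_ZCf^{-1}(C/\delta)n)\}$; applying \eqref{E:4.3.4} with suitable signs and using $\|S\|\le R$ gives $\#A_i/g(\#A_i)\le\eta RK_ZCf^{-1}(C/\delta)n/\vp$. One then checks that the coordinates of any element of $\cF_{(\delta,C)}(x_i)$ lying outside $A=\bigcup_iA_i$ contribute at most $\vp/\eta$ in norm (each such $x_j$ has $\|x_j\|\le\vp/\eta K_ZCf^{-1}(C/\delta)$, the coefficients are at most $K_ZC/\delta$, and there are at most $f^{-1}(C/\delta)$ of them), so the quantized vectors supported in $A$ are $\vp(1+1/\eta)$-dense in $B_X$; rerunning your counting argument with $N$ replaced by $\#A$ and invoking $g(\ell)\ln\ell/\ell\to0$ then bounds $n$. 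To complete your route you would have to supply the missing volume upper bound for $D$ (or for the zonotope $\{\sum a_jx_j:|a_j|\le1\}$, as in Section \ref{S:5} of the paper); as written, the second assertion is not proved.
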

\begin{proof} 
  First note that, if $A\subset \{1,2,.., N\}$ and $(n_j)_{j\in A}\!\subset\!\Z\setminus\{0\}$
with $C\!\ge\!\big\|\sum_{j\in A} n_j \delta z_j\big\|_Z\!\ge \delta f(\#A),$
then $\#A\le f^{-1}(C/\delta)$ and $|n_j|\le  K_Z C/\delta$ for $j\in A$. Thus,  \eqref{E:4.3.1} and
 the volume argument, mentioned before the statement of our proposition, yields
\begin{align*}
 \vp^{-n}\le
\#\cF_{(\delta,C)}(x_i)\le\begin{pmatrix}N \\ \lfloor f^{-1}(C/\delta)\rfloor  \end{pmatrix}
 \Big(\frac{2K_Z C}{\delta} +1\Big)^{\lfloor f^{-1}(C/\delta)\rfloor}
 \le \Big[ N\Big(\frac{2K_Z C}{\delta} +1\Big)\Big]^{f^{-1}(C/\delta)},
\end{align*}
which, after taking $\ln(\cdot)$ on both sides, implies \eqref{E:4.3.3}.

Now assume that also \eqref{E:4.3.4} is satisfied. Let $(e_i,e^*_i)_{i=1}^n$ be an Auerbach basis of $X$,
i.e. $\|e_i\|=\|e_i^*\|=1$ and $e^*_i(e_j)=\delta_{(i,j)}$. Such a basis always exists
(c.f \cite[Theorem 5.6]{FHHMPZ}). Choose $0<\eta<\infty$ so that $\vp(1+1/\eta)<1$ and define for $i=1,2\ldots n$ 
\begin{align*}
A_i&=\Big\{j\in\{1,2\ldots N\}: |e^*_i(x_j)|\ge \vp/\eta K_ZCf^{-1}(C/\delta)n\Big\}
\end{align*}
Then  it follows for the right choice of $\sigma_j =\pm 1$, $j\in A_i$ that
\begin{align*}
g(\#A_i)&\ge  \Big\|\sum_{j\in A_i} \pm z_j\Big\|\\
         & \ge\frac1{\|S\|} \sup \Big\|\sum_{j\in A_i}  \pm x_j\Big\|\\
        & \ge \frac1{\|S\|}  e^*_i\Big(\!\sum_{j\in A_i^\sigma}  \sigma_jx_j\!\Big)
        \ge \frac{\#A_i\vp}{\eta\|S\|K_ZCf^{-1}(C/\delta)n}
\end{align*}
and thus
\begin{equation}\label{E:4.3.5} 
\frac{\#A_i}{g(\#A_i)} \le \frac{\eta\|S\|K_ZCf^{-1}(C/\delta)n}\vp.
\end{equation}
Put $A=\bigcup_{i\le n} A_i$. If $(n_j)_{j\le N}\subset \Z$ is such that $\sum_{j=1}^N \delta n_j x_j\in \cF_{(\delta,C)}(x_j)$, then
\begin{equation*}
\Big\|\sum_{j\in A^c} \delta n_j x_j\Big\|\le 
\sum_{j\in A^c,  n_j\not=0 }\Big(\sum_{i=1}^n |e^*_i(x_j)|\Big) \max_{j\le N} \delta| n_j| 
\le \#\{j: n_j\not=0\}   \frac{\vp}{\eta f^{-1}(C/\delta)} 
\le \frac{\vp}\eta, 
\end{equation*}
 where
  the second inequality follows from the definition of the $A_i$'s and the observation
  at the beginning of the proof, and
  the last inequality  follows from  the fact that
\begin{equation} \label{E:4.3.6} 
f(\#\{j: n_j\not=0\})\le \Big\|\sum n_j  z_j\Big\|\le C/\delta.
\end{equation}

This implies together with \eqref{E:4.3.1} that the set
$$\tilde \cF_{(\delta, C)}=\Big\{ \sum_{j\in A} n_j \delta x_j : (n_j)\subset \Z,\,
 \Big\|\sum_{j=1}^N n_j \delta z_j\Big\|\le C\Big\}$$
is $\vp\big(1+\frac1\eta\big)$-dense in  $B_X$. Hence,  our usual argument comparing volumes and \eqref{E:4.3.6}  yields
$$   \#A ^{f^{-1}(C/\delta)} \Big(\frac{2K_Z C}{\delta} +1\Big)^{f^{-1}(C/\delta) }\ge \begin{pmatrix} \#A \\ \lfloor f^{-1}(C/\delta)\rfloor \end{pmatrix}\Big(\frac{2K_Z C}{\delta} +1\Big)^{\lfloor f^{-1}(C/\delta)\rfloor } \ge  \frac1{\vp^n(1+1/\eta)^n} ,$$
Taking $\ln(\cdot)$  on both sides and letting $r(\ell)=\ln(\ell)g(\ell)/\ell$ for $\ell\in\N$, we conclude by \eqref{E:4.3.5} and since $\vp(1+1/\eta)<1$ that
\begin{align*}
n &\ln\Big(\frac{1}{\vp(1+1/\eta)}\Big)\\
&\le f^{-1}(C/\delta)\Big( \ln(\#A)+ \ln\Big(\frac{2K_Z C}{\delta} +1\Big)   \Big)\\
  &\le f^{-1}(C/\delta)\Big( \ln n +\max_{i\le n} \ln(\#A_i )+ \ln\Big(\frac{4K_Z C}{\delta} \Big)   \Big)\\
 &= f^{-1}(C/\delta)\Big( \ln n +\max_{i\le n} r(\#A_i )
     \Big(\frac{\#A_i}{g(\#A_i)}\Big) +\ln\Big(\frac{4K_Z C}{\delta} \Big) \Big)\\
    &\le f^{-1}(C/\delta)\Big( \ln n +n r(\#A_{i_0} ) \frac{\eta\|S\|K_Z Cf^{-1}(C/\delta)}{\vp}+\ln\Big(\frac{4K_Z C}{\delta} \Big) \Big)
\end{align*}
where $i_0\le n$ is chosen so that $\#A_{i_0}$ is maximal.
By our assumption on $g$ we can find an $\ell_0\in\N$ so that
$$f^{-1}(C/\delta)r(\ell)\le \frac12\ln\Big(\frac{1}{\vp(1+1/\eta)}\Big),\text{ whenever $\ell\ge \ell_0$}.$$
If $A_{i_0}\le \ell_0$ then 
$$n\ln\Big(\frac{1}{\vp(1+1/\eta)}\Big) \le f^{-1}(C/\delta) \Big[\ln n+\ln \ell_0 + \ln
\Big(\frac{2K_Z C}{\delta} +1\Big)\Big],$$
which implies that $n$ is bounded by a number which only depends on  $\vp$, $\delta$, $C$,
 $f$, $g$ and $K_Z$. If $\#A_{i_0}>\ell_0$, then it follows that 
$$\frac{n}{2}\ln\Big(\frac{1}{\vp(1+1/\eta)}\Big)\le \ f^{-1}(C/\delta)\Big( \ln n +
\frac{\eta\|S\|K_Z Cf^{-1}(C/\delta)}{\vp}+\ln\Big(\frac{4K_Z C}{\delta} \Big) \Big).$$
which implies our claim also in that case.
\end{proof}

We  shall formulate a corollary of Proposition \ref{P:4.3} for the infinite dimensional
situation.  We need first to introduce some notation and make some observations.

 Let $(x_i,f_i)_{i\in\N}$ be a frame of $X$. Furthermore assume that  $X$ has the  {\em $\pi_\lambda$-property}, which means that there is a sequence
  $\Pb=(P_n)$  of finite rank projections, whose norms are uniformly bounded, and 
 which {\em  approximate the identity}, i.e.
\begin{equation}\label{E:4.1}
  x=\lim_{n\to\infty} P_n(x), \text{ in norm  for all $x\in X$.}\end{equation}
For example, if $X$ has a basis $(e_i)$ we could choose for $n\in\N$ the projection onto the first $n$ coordinates, i.e.
$$P_n: X\to X,\quad \sum a_i e_i\mapsto \sum_{i=1}^n a_i e_i.$$

It is easy to see that
 $\big(P_n(x_i),f_i|_{P_n(X)}\big)$  
 is a frame of the space  $X_n=P_n(X)$. Moreover, condition 
 \eqref{E:4.1} and a straightforward compactness argument
 shows that for any $n\in\N$ and any $\frac12<r<1$ there is an $M_n=M(r,n)$ so that
it follows that
\begin{equation}\label{E:4.2} \Big\| x-\sum_{i=1}^N \la f_i,x\ra P_n(x_i )\Big\|\le (1-r)\|x\|,
\text{ whenever $x\in X_n$ and $N\ge M_n$}
\end{equation}

 It follows that the operators $(Q_n)$, with
$$Q_n: X_n\to X_n,\qquad x\mapsto \sum_{i=1}^{M_n}\la P_n^*(f_i),x\ra P_n(x_i ),$$
are uniformly bounded ($\|Q_n\|\le 2$, for $n\in\N$), invertible and their inverses are uniformly bounded
($\|Q_n^{-1}\|\le \frac1{r}$, for $n\in\N$). For $x\in X_n$ we write
\begin{equation*}
x= Q_n^{-1}Q_n(x)
=\sum_{i=1}^{M_n}\la P_n^*(f_i),x\ra (Q_n^{-1}\circ P_n)(x_i )
\end{equation*}
and deduce therefore that 
$$\big(y_i^{(n)},g_i^{(n)}\big)_{i=1}^{M_n}:=\big((Q_n^{-1}\circ P_n(x_i ),P_n^*(f_i)\big)_{i=1}^{M_n}$$
 is a finite frame of
$X_n$. 

Let $Z$ be a space with basis $(z_i)$ which is associated to the frame 
 $(x_i,f_i)$. 
It follows  easily that the operators $(S_n)$ and $(T_n)$
\begin{align*}&S_n:Z_n= [z_i:i\le M_n]\to X_n,\quad  z\mapsto \sum_{i=1}^{M_n} a_i z_i\mapsto
\sum_{i=1}^{M_n} a_i  (Q_n^{-1}\circ P_n)(x_i)\\
&T_n: X_n\to Z_n,\quad  x=\sum_{i=1}^{M_n} f_i(x) (Q_n^{-1}\circ P_n)(x_i)\mapsto \sum_{i=1}^{M_n} f_i(x) z_i\\
\end{align*}
are uniformly bounded, and thus $Z_n$ is an associated space
for the frame $(y^{(n)}_i,g_i^{(n)})_{i\le M_n}$ while $T_n$ and
$S_n$ are the associated decomposition and reconstruction operators, respectively.

Finally assume that the frame $(x_i,f_i)$ satisfies the $(\vp,\delta,C)$-NQP with respect to $Z$.
Again  by compactness   and using Proposition \ref{P:4.2} we can choose $M_n=M(r,n)$ large enough so that it also satisfies
\begin{align}\label{E:4.3}
&\text{For all $n\in\N$ and all $x\in X_n$ there is a sequence $(k_i)_{i=1}^{M_n}\subset\Z$ so that}\\
&\Big\|\sum_{i=1}^{M_n} k_i\delta z_i\Big\|\le C\|x\|\text{ and }\Big\|x- \sum_{i=1}^{M_n}\delta k_i x_i\Big\|\le\vp.\notag
\end{align}

 After changing $\vp>0$ and $\delta$ proportionally, if necessary, and since $r>\frac12$, we can assume that
$q=\frac{1-r}{r}+\sup_n{\|P_n\|}\frac{\vp}{r}<1$. 
For $n$ in $\N$ and $x\in B_{X_n}$ we can therefore  choose $(k_i)_{i=1}^{M_n}\subset \Z$ so that
 $\|\sum_{i=1}^{M_n} \delta k_i z_i\|\le C$ and
\begin{align*}
\Big\|x-\sum_{i=1}^{M_n} \delta k_i(Q_n^{-1}\circ P_n)(x_i)\Big\|&\le 
 \|Q^{-1}_n\|\cdot\Big\|Q_n(x)-\sum_{i=1}^{M_n} \delta k_iP_n(x_i)\Big\|\\
&\le\|Q^{-1}_n\|\cdot\|Q_n(x)- x\|+\|Q^{-1}_n\|\cdot\Big\|x-\sum_{i=1}^{M_n} \delta k_iP_n(x_i)\Big\|\\
&\le\|Q^{-1}_n\|\cdot\|Q_n(x)- x\|+\|Q^{-1}_n\|\cdot\|P_n\| \cdot \Big\|x-\sum_{i=1}^{M_n} \delta k_ix_i\Big\|\\
&\le \frac{1-r}{r}+\sup_n{\|P_n\|}\frac{\vp}{r}=q<1.
\end{align*}
Thus, for every $n\in\N$ the frame $\big(P^*_n(f_i),(Q_n^{-1}\circ P_n)(x_i)\big)_{i=1}^{M_n}$
satisfies condition \eqref{E:4.3.1} of Proposition \ref{P:4.3} (for $\vp=q$). Therefore we deduce the following Corollary.
\begin{cor}\label{C:4.4}
 Let $(x_i,f_i)_{i\in \N}$ be a frame of an infinite dimensional Banach space  $X$  for which there is a uniformly bounded sequence $(P_n)$ of finite rank projections which approximate the identity.
Assume that $(x_i,f_i)_{i\in \N}$ satisfies the 
 $(\vp,\delta,C)$-NQP with respect to a space $Z$ with basis $(z_i)$ for some choice of $\vp>0$, $\delta>0$ and $C$ 
so that  $q=\frac{1-r}{r}+\sup_n{\|P_n\|}\frac{2\vp}{r}<1$ with $\frac12<r<1$.
Let $(M_n)$ be any sequence in $\N$ which satisfies
\eqref{E:4.2} and \eqref{E:4.3}.

Finally assume that $(z_i)$ satisfies the following lower estimate 
$$\lim_{n\to\infty} \inf\Big\{ \Big\|\sum_{j\in A} n_j z_j\Big\|:
 (n_j)_{j\in A}\!\subset\! \Z\setminus\{0\}, A\subset \N, \#A=n\Big\}=\infty.$$

Then
\begin{enumerate}
\item[a)] $(M_n)$ increases exponentially with the dimension of $X_n$, i.e. there is a $c>1$, so that $M_n\ge c^{\dim(X_n)}$ eventually,
\item[b)] $\displaystyle \limsup_{n\to\infty} \frac {\ln(n)}{n}\sup\Big\{\Big\|\sum_{i\in A} \pm z_i \Big\|: A\subset \N,
 \#A=n,\Big\}=\infty$.
\end{enumerate}
\end{cor}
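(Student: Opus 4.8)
The plan is to deduce everything from Proposition \ref{P:4.3} applied to the finite frames $(y_i^{(n)}, g_i^{(n)})_{i=1}^{M_n}$ of $X_n$, whose construction precedes the statement. The setup paragraphs already verified that $Z_n = [z_i : i \le M_n]$ is an associated space for this finite frame, with uniformly bounded associated operators $S_n, T_n$, and that condition \eqref{E:4.3.1} of Proposition \ref{P:4.3} holds with the parameter $q < 1$ in place of $\vp$. It remains to produce the functions $f$ and $g$ required by \eqref{E:4.3.0}. For $f$, the hypothesis that $\inf\{\|\sum_{j\in A} n_j z_j\| : (n_j)_{j\in A}\subset\Z\setminus\{0\},\ \#A = n\} \to \infty$ lets me define $f(n)$ to be (a strictly increasing minorant of) that infimum, so that \eqref{E:4.3.2} holds uniformly in $n$ and $f(n)\to\infty$; after passing to a suitable strictly increasing function below this infimum one may assume $f$ is strictly increasing and satisfies the first half of \eqref{E:4.3.0}.

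For part (a): suppose toward a contradiction that $M_n$ does \emph{not} grow exponentially in $\dim(X_n)$, i.e.\ $\ln M_n / \dim(X_n) \to 0$ along a subsequence. Apply the first conclusion \eqref{E:4.3.3} of Proposition \ref{P:4.3} to the finite frame $(y_i^{(n)}, g_i^{(n)})$ with the parameters $\vp \rightsquigarrow q$, $\delta$, $C$, and note $K_{Z_n} \le K_Z$ is bounded. This gives
$$
\ln M_n \ge \frac{\dim(X_n)\,\ln(1/q)}{f^{-1}(C/\delta)} - \ln\Bigl(\tfrac{4K_Z C}{\delta}+1\Bigr).
$$
Since $f^{-1}(C/\delta)$, $K_Z$, $C$, $\delta$, $q$ are all fixed, the right-hand side is $\dim(X_n)$ times a positive constant minus a constant; as $\dim(X_n)\to\infty$ (because the $P_n$ are finite-rank and approximate the identity on the infinite-dimensional $X$), this forces $\liminf \ln M_n/\dim(X_n) > 0$, contradicting the assumption. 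Hence $M_n \ge c^{\dim(X_n)}$ eventually for some $c > 1$.

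For part (b): argue again by contradiction. If $\limsup_n \frac{\ln n}{n}\sup\{\|\sum_{i\in A}\pm z_i\| : \#A = n\} < \infty$, then setting $g(n) := \sup\{\|\sum_{i\in A}\pm z_i\| : \#A = n\}$ (replaced by a strictly increasing majorant if needed), this $g$ satisfies the second condition in \eqref{E:4.3.0}, namely $g(n)\ln n / n \to 0$, as well as \eqref{E:4.3.4}. Now invoke the second conclusion of Proposition \ref{P:4.3}: with $K_{Z_n}\le B := K_Z$ and $\|S_n\|\le R := \sup_n\|S_n\|$ fixed, there is an $n_0$ depending only on $C, B, R, q, \delta, f, g$ such that $\dim(X_n) \le n_0$ for every $n$ — which is absurd since $\dim(X_n)\to\infty$. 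The main obstacle is purely bookkeeping: one must check that the uniform bounds established in the setup paragraphs ($K_{Z_n}\le K_Z$, $\sup_n\|S_n\| < \infty$, and validity of \eqref{E:4.3.1} with parameter $q$ independent of $n$) genuinely let the single $n_0$ of Proposition \ref{P:4.3} be applied simultaneously to all the finite frames $(y_i^{(n)}, g_i^{(n)})$, and that $f, g$ can be taken to be honest strictly increasing functions on $(0,\infty)$ rather than merely sequences; both are routine once the monotone-minorant/majorant replacements are made.
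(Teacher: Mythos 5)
Your overall route is exactly the one the paper intends: Corollary \ref{C:4.4} is stated without a separate proof and is meant to be read off from Proposition \ref{P:4.3} applied to the finite frames $(y^{(n)}_i,g^{(n)}_i)_{i\le M_n}$ of $X_n$ with associated spaces $Z_n=[z_i:i\le M_n]$, using the uniform bounds established in the preceding paragraphs ($K_{Z_n}\le K_Z$, $\sup_n\|S_n\|<\infty$, and $q$-density of the quantized set in $B_{X_n}$ with $q<1$ independent of $n$). Your part (a) is correct: the lower-estimate hypothesis produces a strictly increasing $f$ with $f(m)\to\infty$ satisfying \eqref{E:4.3.2} simultaneously for all $n$, and \eqref{E:4.3.3} gives $\ln M_n\ge a\,\dim(X_n)-b$ with $a>0$ fixed, which is exponential growth since $\dim(X_n)\to\infty$.

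Part (b), however, has a genuine gap. Writing $h(n)=\sup\{\|\sum_{i\in A}\pm z_i\|:\#A=n\}$, you negate (b) to get $\limsup_n\frac{\ln n}{n}h(n)<\infty$ and then claim that (a strictly increasing majorant of) $h$ satisfies the second condition in \eqref{E:4.3.0}, namely $g(n)\ln n/n\to0$. That inference is a non sequitur: finiteness of the limsup only gives boundedness, say $\frac{\ln n}{n}h(n)\le M$, and any majorant $g\ge h$ has $\limsup_n g(n)\ln n/n\ge\limsup_n\frac{\ln n}{n}h(n)$, which need not be $0$ (e.g.\ $h(n)\sim n/\ln n$ is compatible with your assumption but admits no admissible $g$). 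The distinction matters, because the second half of Proposition \ref{P:4.3} genuinely uses the limit condition: its proof needs $f^{-1}(C/\delta)\,\frac{\ln\ell}{\ell}g(\ell)$ to drop below the fixed threshold $\frac12\ln\frac{1}{q(1+1/\eta)}$ for all large $\ell$, and a bound by an arbitrary constant $M$ is not enough, since all parameters entering that threshold ($q,\delta,C,f,K_Z,\sup_n\|S_n\|$) are fixed by the frame. What your argument actually delivers is the weaker conclusion that $\frac{\ln n}{n}h(n)$ cannot tend to $0$; tracking the constants in the proof of Proposition \ref{P:4.3} one even gets $\limsup_n\frac{\ln n}{n}h(n)\ge c_0$ for an explicit $c_0>0$ depending only on those parameters. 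To reach the literal statement (b), with $\limsup=\infty$, an additional idea beyond Proposition \ref{P:4.3} is needed (none is visible along this route), so you should either supply one or prove (b) in the weakened form just described.
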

Let us simplify the conditions in Corollary \ref{C:4.4} and observe that it implies the following.
\begin{cor}\label{C:4.5}
Assume that $X$ is an infinite dimensional Banach space with the $\pi_\lambda$-property and
that $Z$ is a Banach space with a  basis $(z_i)$ satisfying for some choice of $1<q<p<\infty$
{\em lower $\ell_p$ and upper $\ell_q$ estimates}, which means that for some $C$
$$\frac1C\Big(\sum |a_i|^p\Big)^{1/p}\le \Big\|\sum a_i z_i \Big\|\le
 C\Big(\sum |a_i|^q\Big)^{1/q}.$$

Then no frame of  $X$ has the $NQP$ with respect to $\big(Z, (z_i)\big)$. 
\end{cor}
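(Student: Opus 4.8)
The plan is to deduce Corollary \ref{C:4.5} directly from Corollary \ref{C:4.4} by checking that the hypotheses of the latter are met. Suppose, for contradiction, that some frame $(x_i,f_i)$ of $X$ has the NQP with respect to $(Z,(z_i))$, say the $(\vp,\delta,C)$-$Z$-NQP. By the homogeneity of the property in $(\vp,\delta)$ noted after Definition \ref{D:4.1}, we may rescale so that $\vp$ is as small as we like; in particular, since $X$ has the $\pi_\lambda$-property we may fix $\frac12<r<1$ (say $r$ close to $1$) and then shrink $\vp$ until $q=\frac{1-r}{r}+\lambda\frac{2\vp}{r}<1$, where $\lambda=\sup_n\|P_n\|$. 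Thus the standing hypotheses of Corollary \ref{C:4.4} are satisfied.

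Next I would verify the lower estimate required in Corollary \ref{C:4.4}. The lower $\ell_p$ estimate on $(z_i)$ gives, for any finite $A\subset\N$ with $\#A=n$ and any $(n_j)_{j\in A}\subset\Z\setminus\{0\}$,
\begin{equation*}
\Big\|\sum_{j\in A} n_j z_j\Big\|\ge \frac1C\Big(\sum_{j\in A}|n_j|^p\Big)^{1/p}\ge \frac1C\, n^{1/p},
\end{equation*}
since each $|n_j|\ge 1$. Hence the infimum over all such configurations is at least $C^{-1}n^{1/p}\to\infty$ as $n\to\infty$, which is exactly the lower-estimate hypothesis of Corollary \ref{C:4.4}. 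Therefore conclusion (b) of that corollary applies:
\begin{equation*}
\limsup_{n\to\infty}\frac{\ln n}{n}\sup\Big\{\Big\|\sum_{i\in A}\pm z_i\Big\|:A\subset\N,\ \#A=n\Big\}=\infty.
\end{equation*}

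Finally I would contradict this with the upper $\ell_q$ estimate. For any $A\subset\N$ with $\#A=n$ and any signs $\pm$,
\begin{equation*}
\Big\|\sum_{i\in A}\pm z_i\Big\|\le C\Big(\sum_{i\in A}1\Big)^{1/q}=C\, n^{1/q},
\end{equation*}
so $\frac{\ln n}{n}\sup\{\cdots\}\le C\,\frac{\ln n}{n^{1-1/q}}$. Since $q<\infty$ we have $1-1/q>0$, so this quantity tends to $0$ as $n\to\infty$, whence the $\limsup$ is $0$, not $\infty$. This contradiction shows no such frame can exist, proving the corollary. (Note that the hypothesis $q<p$ is not actually needed for this argument; only $1\le p$ and $q<\infty$ are used, through the two one-sided estimates separately.) The only mildly delicate point is the bookkeeping in the first paragraph — making sure the rescaling of $(\vp,\delta)$ is compatible with choosing $r$ and then a small enough $\vp$ so that $q<1$ — but this is routine given the stated homogeneity.
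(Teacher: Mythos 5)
Your proof is correct and is essentially the argument the paper intends — it deduces Corollary \ref{C:4.5} from Corollary \ref{C:4.4} by (i) using the homogeneity remark to shrink $\vp$ so that $q<1$, (ii) observing that the lower $\ell_p$ estimate with integer coefficients gives a lower bound of $C^{-1}n^{1/p}\to\infty$, and (iii) observing that the upper $\ell_q$ estimate makes $\frac{\ln n}{n}\sup\|\sum_{i\in A}\pm z_i\|\to 0$, contradicting conclusion (b). One small correction to your closing parenthetical: what you actually use is $p<\infty$ (so that $n^{1/p}\to\infty$) and $q>1$ (so that $1-1/q>0$), not ``$1\le p$ and $q<\infty$''; the inference ``since $q<\infty$ we have $1-1/q>0$'' fails at $q=1$, where $\ln n/n^{1-1/q}=\ln n\to\infty$. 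The stated hypotheses $1<q<p<\infty$ do supply exactly what is needed, so the proof stands.
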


The following example shows how to construct a frame with respect to a space $Z$ which contains $\ell_1$.

\begin{prop}\label{P:4.6}
Let $(x_i,f_i)_{i\in\N}$ be any frame of a Banach space $X$ and let $Z$ be a space with semi-normalized basis
 $(z_i)$, which is associated to $(x_i,f_i)$.
Then there is a  frame $(\tilde x_i, \tilde f_i)_{i\in\N}$  and a basis $(\tilde z_i)$ of
 $\tilde Z=Z\oplus_\infty\ell_1$ so that $(\tilde x_i, \tilde f_i)_{i\in\N}$ has
$\tilde Z$ as an associated space and has the NQP with respect to $\tilde Z$. Moreover,
 $(\tilde x_i, \tilde f_i)_{i\in\N}$ is semi-normalized if 
 $(x_i,f_i)_{i\in\N}$  has this property (for example if $(x_i)$ is a normalized basis of $X$).  

\end{prop}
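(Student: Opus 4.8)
The plan is to add, in parallel to the existing frame, a new frame built from a dense sequence in $B_X$ sitting inside an $\ell_1$-block. Concretely, let $(u_n)_{n\in\N}$ be a dense sequence in $B_X$, and let $(e_n)$ denote the unit vector basis of $\ell_1$. Using Proposition \ref{P:2.7} with $Y=\ell_1$, the operator $V:\ell_1\to X$ given by $V(e_n)=u_n$ (which is bounded since $\|u_n\|\le 1$), and the basis $(z_i)$ of $Z$, one obtains a new frame $(\tilde x_i,\tilde f_i)$, a new basis $(\tilde z_i)$ of $\tilde Z=Z\oplus_\infty\ell_1$, and the fact that $(\tilde Z,(\tilde z_i))$ is an associated space. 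The new frame vectors are of the form $x_i\pm V(y)$ up to normalization, so semi-normalization of $(\tilde x_i,\tilde f_i)$ follows from semi-normalization of $(x_i,f_i)$ together with $\inf_n\|u_n\|>0$ if we arrange the dense sequence to avoid a neighborhood of $0$ (replace $(u_n)$ by a dense sequence in an annulus $\{1/2\le\|x\|\le 1\}$, which is still fine for the approximation step after rescaling); the $\tilde f_i$ are half the $f_i$, hence semi-normalized too.

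The main point is to verify the NQP. Fix any $x\in X$ with $\|x\|\le 1/2$, say. Since $(u_n)$ is dense, pick $n$ with $\|x-u_n\|<\vp/2$. In the expansion of $u_n$ through the new frame, $u_n=V(e_n)$ is represented by the single basis vector $e_n$ of the $\ell_1$-summand, which in the $(\tilde z_i)$ coordinates corresponds (via the construction in Proposition \ref{P:2.7}) to a pair $\tilde z_{2m-1},\tilde z_{2m}$ with coefficients whose difference reproduces $V(e_n)=u_n$ and whose $Z$-parts cancel. Choosing $\delta=1$ (homogeneity lets us rescale afterward) and the quantized coefficients $k_{2m-1}=1$, $k_{2m}=-1$ (or an analogous single pair, depending on the exact indexing in Proposition \ref{P:2.7}) — and all other $k_i=0$ — gives $\sum\delta k_i\tilde x_i=u_n$ exactly, while $\|\sum\delta k_i\tilde z_i\|_{\tilde Z}$ is, up to the fixed constant $\max(1,\|z_i\|,1/\|z_i\|)$, of order $\|e_n\|_{\ell_1}=1$, i.e. bounded by some absolute constant $C_0$ independent of $x$. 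Then $\|x-\sum\delta k_i\tilde x_i\|=\|x-u_n\|<\vp/2<1$, so the hypotheses of Proposition \ref{P:4.2} are met with this $\delta_0$, $q_0=\vp/2<1$, $C_0$; invoking Proposition \ref{P:4.2} upgrades this to the full $(\vp',\delta',C')$-$\tilde Z$-NQP for every $x\in X$.

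The one genuinely delicate point is bookkeeping the index shift in Proposition \ref{P:2.7}: that construction splits each old index $i$ into two new indices $2i-1,2i$, and the new frame vectors mix a piece of $x_{\lceil i/2\rceil}$ with $\pm\lambda V(y)$. So the dense vector $u_n=V(e_n)$ is not hit by a single quantized coefficient but by the combination $k_{2i-1}\tilde x_{2i-1}+k_{2i}\tilde x_{2i}$ for the index $i$ attached to $e_n$; with $k_{2i-1}=k_{2i}=1$ this yields $2x_i+0$ (the $V$-parts cancel, wrong), while $k_{2i-1}=1,k_{2i}=-1$ yields $2\lambda_i V(y_i)$ and kills the $x_i$-part — so one must instead first pass to a frame that is already ``$x_i$-free'' on the $\ell_1$ block, or equivalently apply Proposition \ref{P:2.7} with the $\ell_1$-block kept disjoint from the $Z$-block so that the relevant $\tilde x_i$ on the new indices are honestly multiples of $V(e_n)$. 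The cleanest route is: enlarge $Z$ to $Z\oplus_\infty\ell_1$ directly, set $S$ on the $\ell_1$ part to be $V$, keep $T$ unchanged (landing in the $Z$-part only), and check $S\circ T=\Id_X$, $S(\tilde z_i)=\tilde x_i$, $T^*(\tilde z_i^*)=\tilde f_i$ by hand as in the proof of Proposition \ref{P:2.4} — this avoids the doubling entirely and makes the single-coefficient argument above literally correct. I would present it this way, citing Proposition \ref{P:2.4} for the frame/associated-space equivalence and Proposition \ref{P:4.2} for the final upgrade.
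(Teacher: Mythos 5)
Your first two paragraphs are essentially the paper's argument (apply Proposition \ref{P:2.7} with $Y=\ell_1$ and $V$ sending the unit vectors onto a net in $B_X$, hit the net vector with one quantized pair of coefficients, finish with Proposition \ref{P:4.2}), but the point at which you abandon that route rests on a miscomputation, and the substitute you say you would actually present does not prove the full statement. In Proposition \ref{P:2.7} one has $\tilde x_{2i}=x_i+\lambda_i V(e_i)$ and $\tilde x_{2i-1}=x_i-\lambda_i V(e_i)$, so the choice $k_{2i}=1$, $k_{2i-1}=-1$ gives $\tilde x_{2i}-\tilde x_{2i-1}=2\lambda_i V(e_i)$: the $x_i$-part is killed and what survives is a multiple of the net vector $V(e_i)$ --- which is exactly what your paragraph-2 argument needs, not an obstruction. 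Taking $\delta=\tfrac12$ (with $\lambda_i=1$ after normalizing $\|z_i\|=1$, or absorbing $\lambda_i$ into the choice of $V$) one gets $\sum\delta k_j\tilde x_j=V(e_i)$ exactly, while $\bigl\|\tfrac12(\tilde z_{2i}-\tilde z_{2i-1})\bigr\|=\|(0,e_i)\|=1$ because the $Z$-components cancel; Proposition \ref{P:4.2} then upgrades this to the NQP. This is precisely the published proof, so no passage to an ``$x_i$-free'' frame is needed; your ``so one must instead'' is a non sequitur (and your $\delta=1$ choice gives $2V(e_i)$, not $V(e_i)$, another sign of the same confusion).

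The ``cleanest route'' you end with --- keep $T$ landing in the $Z$-summand and let $S$ act as $V$ on the $\ell_1$-summand --- does produce a frame with the NQP with respect to $Z\oplus_\infty\ell_1$, but the functionals attached to the new $\ell_1$-block vectors are $\tilde f_i=T^*(\tilde z_i^*)=0$, so $(\tilde f_i)$ contains infinitely many zeros and the frame is never semi-normalized; the ``Moreover'' clause of the proposition is lost. Relatedly, in your first paragraph the claim that $\inf_n\|u_n\|>0$ together with semi-normalization of $(x_i,f_i)$ yields $\inf_i\|\tilde x_i\|>0$ is false as stated: $\tilde x_{2i}=x_i+u_i$ can be arbitrarily small when $u_i\approx -x_i$. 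One must in addition arrange the pairing so that $\|u_i\pm x_i\|$ is bounded below, which is what the paper's condition $\|Q(e_i)\pm x_i\|>\tfrac14$ does. So the correct course is to keep your Proposition \ref{P:2.7} construction, add that separation requirement on the net, and drop the final modification.
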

\begin{proof}
 Assume, without loss of generality that  $\|z_i\|=1$ for $i\in\N$.
Choose a quotient map $Q:\ell_1\to X$ so that $(Q(e_i):i\in\N)$ is a $\frac12$-net in $B_X$
and so that $\big\|Q(e_i)\pm x_i\big\|>\frac14$ for $i\in\N$ (which is easy to accomplish). 
Finally we apply Proposition \ref{P:2.7} to $Y=\ell_1$ with its unit vector basis $(e_i)$
 and $V=Q$, and  observe
  that the frame $(\xt_i,\tilde f_i)$ and basis $(\zt_i)$ of $\tilde Z$, as  constructed there, has the property that
for any $x\in B_X$ there is an $i\in\N$ so that
$$\Big\|x-\frac{\tilde x_{2i}-\tilde x_{2i-1}}2\Big\|= \|x-Q(e_i)\|\le \frac12 \text{ and }
 \Big\|\frac12  (\tilde z_{2i}-\tilde z_{2i-1})\Big\|= 1$$
which implies by Proposition \ref{P:4.2} that $(x_i,f_i)_{i\in\N}$ has the NQP with respect to $(z_i)$.

By construction of $(\xt_i,\ft_i)$ in Proposition \ref{P:2.7} it follows that $(\ft_i)$ is semi-normalized if $(f_i)$ has this property and since $\big\|Q(e_i)\pm x_i\big\|>\frac14$, for $i\in \N$, it follows that
$$\frac14\le \|\xt_i\|\le \sup_{j\in\N}\|x_j\| +1,$$
which implies that $(\tilde x_i, \tilde f_i)$ is semi-normalized if  $(x_i,f_i)$ has this property. 
\end{proof}

Finally let us present an infinite  dimensional argument implying that if $Z$ is a reflexive
space with basis it cannot be the associated space  of a frame $(x_i,f_i)_{i\in\N}$, with $\|x_i\|=1$, for $i\in\N$, which satisfies the NQP.

\begin{prop}\label{P:4.7} Assume that $Z$ is a reflexive space with
  a semi-normalized basis $(z_i)$, and assume that $(x_i,(f_i))$ is a frame of an infinite dimensional Banach space $X$ with associated space $Z$.

Then $((x_i),(f_i))$  cannot have the NQP with respect to $Z$.
\end{prop}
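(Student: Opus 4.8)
The plan is to derive a contradiction from the assumption that $(x_i,f_i)$ has the NQP with respect to $Z$ by producing, inside $Z$, a sequence of quantized vectors that is bounded in norm but whose supports grow without bound, and then using reflexivity of $Z$ to extract a weakly convergent subsequence whose limit cannot exist. More precisely, suppose $(x_i,f_i)$ satisfies the $(\vp,\delta,C)$-$Z$-NQP. Since $X$ is infinite dimensional and $\|x_i\|=1$, Remark \ref{R:2.3}(a) gives $f_i\xrightarrow{w^*}0$, so in particular for each fixed finite set the functionals eventually act trivially; combined with the fact that $X$ is infinite dimensional one can choose, for every $n$, a norm-one vector $y_n\in X$ whose best quantized approximation $\tilde y_n=\sum_j\delta k^{(n)}_j x_j$ is supported on indices that are both large (beyond any prescribed threshold) and numerous. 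The first key step is therefore to show: \emph{the support sizes $\#\{j:k^{(n)}_j\neq 0\}$ are unbounded as $n\to\infty$.} Here is where one imitates the finite-dimensional volume argument of Proposition \ref{P:4.3}, but now in the purely infinite-dimensional form outlined in Corollary \ref{C:4.4}: if the quantized supports had uniformly bounded cardinality, say $\le m$, then the set $\cF_{(\delta,C)}(x_i)$ restricted to any fixed $M_n$ coordinates would, by the computation preceding Corollary \ref{C:4.4}, be $q$-dense in $B_{X_n}$ for a dimension $\dim(X_n)$ that grows to infinity, while its cardinality is at most $\binom{M_n}{m}(2K_ZC/\delta+1)^m$, which is polynomial in $M_n$ of fixed degree $m$; a volume comparison $q^{-\dim(X_n)}\le \#\cF$ then forces $\dim(X_n)\le c\log M_n$, contradicting the exponential growth of $\dim(X_n)$ that the $\pi_\lambda$-property permits (we may pass to $M_n$ as large as we like). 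Hence the supports must be unbounded.

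The second key step uses reflexivity. Having produced quantized vectors $w_n=\sum_j\delta k^{(n)}_j z_i\in Z$ with $\|w_n\|_Z\le C$ (since $\|y_n\|=1$) whose supports have cardinality tending to infinity, and whose nonzero coefficients are integer multiples of $\delta$, hence bounded below in modulus by $\delta$, I would pass to a subsequence (and use a gliding-hump / perturbation argument with the shrinking basis — every basis of a reflexive space is boundedly complete and shrinking) to arrange that the supports of the $w_n$ are ``spread out'': either they march off to infinity (disjointly supported blocks) or they stabilize on an infinite common tail. In the boundedly complete case, a sequence of blocks each of norm at most $C$ with at least $\delta$-sized coefficients on arbitrarily long supports contradicts bounded completeness once we form partial sums $\sum_{n\le N} w_n/2^n$ — these are Cauchy in the obvious formal sense but the boundedly complete property, together with the fact that infinitely many coordinates get hit by a coefficient of absolute value $\ge\delta/2^n$ is too weak to bite directly, so the cleaner route is: since $Z$ is reflexive, $B_Z$ is weakly compact, so $w_n\xrightarrow{w}w$ along a subsequence; but $z_i^*(w_n)$ is, for each fixed $i$, eventually $0$ (because the supports escape to infinity after a gliding hump), forcing $w=0$; on the other hand a reflexive space with a semi-normalized basis satisfies a nontrivial lower estimate (it cannot contain $\ell_1$ or $c_0$, and one extracts from the basis a subsequence with a genuine lower $\ell_p$ estimate, $p<\infty$), and this lower estimate applied to $w_n$ gives $\|w_n\|_Z\ge \frac1{C'}(\#\supp w_n)^{1/p}\delta\to\infty$, contradicting $\|w_n\|_Z\le C$.

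The main obstacle — and the step I would spend the most care on — is the extraction of a genuine nontrivial lower estimate for (a subsequence of) the semi-normalized basis $(z_i)$ of the reflexive space $Z$, uniformly enough to contradict the boundedness $\|w_n\|_Z\le C$. One cannot simply quote a lower $\ell_p$ estimate for the whole basis: reflexivity plus semi-normalization only rules out subsequences equivalent to the $c_0$- or $\ell_1$-basis (by James' theorem, a space with an unconditional basis is reflexive iff the basis contains no copy of $c_0$ or $\ell_1$ — but here the basis need not be unconditional), so the honest statement is that $(z_i)$ has \emph{no} subsequence equivalent to the $c_0$-unit-vector basis. To turn this into a quantitative lower bound for $\|\sum_{j\in A}\delta k_j z_j\|$ with $k_j$ integers of modulus $\ge 1$ and $\#A\to\infty$, I would argue by contradiction: if these norms stayed bounded, then the normalized sums would form a sequence of normalized blocks whose coefficients are uniformly bounded above (by $C/\delta$) and whose number of nonzero terms tends to infinity; a standard subsequence-splitting argument (choosing at each stage a large but fixed number of terms whose contribution is at least a fixed fraction, otherwise one could discard them) would then produce a block subsequence equivalent to the $c_0$ or summing basis, contradicting reflexivity of $Z$. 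Making this last argument airtight — in particular handling the lack of unconditionality via the bimonotone (or monotone) structure available from Proposition \ref{P:2.4} and a careful choice of signs and index blocks — is the technical heart of the proof.
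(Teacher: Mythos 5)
There is a genuine gap, and it sits precisely at what you call the ``technical heart.'' Your plan is to contradict the bound $\|w_n\|_Z\le C$ by extracting from the semi-normalized basis $(z_i)$ of the reflexive space $Z$ some nontrivial lower estimate (or, failing that, by showing that bounded long integer combinations would force a subsequence equivalent to the $c_0$ or summing basis). That claim is false in general: take $Z=\bigl(\sum_n\oplus\,\ell_\infty^n\bigr)_{\ell_2}$ with the concatenation of the unit vector bases of the blocks. This $Z$ is reflexive, the basis is semi-normalized, monotone and even $1$-unconditional, and no subsequence is equivalent to the unit vector basis of $c_0$; yet the sum of the $n$ basis vectors in the $n$-th block has norm $1$ for every $n$. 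So vectors with nonzero integer coefficients on arbitrarily large supports can stay bounded in a reflexive $Z$, and no amount of care with signs and index blocks will produce the contradiction you are aiming for. (Your first step has a separate problem: the volume count only yields $\dim(X_n)\le c\ln M_n$, which is not a contradiction, since $M_n$ is chosen after $X_n$ and may well be exponentially large in $\dim(X_n)$ --- indeed Corollary \ref{C:4.4} asserts exactly this exponential growth as a conclusion, not an absurdity, and moreover it requires a lower-estimate hypothesis on $(z_i)$ that you do not have.)

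The part of your sketch that does survive is the weak-compactness extraction: integrality of the coefficients plus semi-normalization gives finite supports and coordinatewise stabilization along a subsequence, and bounded completeness and shrinkingness of the basis (automatic in a reflexive space) give weak convergence. The paper uses exactly this, but then turns in a different direction: the set $\cB_{(C,\delta)}$ of admissible quantized vectors is a \emph{countable} weakly compact subset of $Z$, and if $S(\cB_{(C,\delta)})$ is $\vp$-dense in $B_X$ with $\vp<1$, then $x^*\mapsto S^*(x^*)|_{\cB_{(C,\delta)}}$ is an isomorphic embedding of $X^*$ into $C(K)$ for a countable compact $K$. Such $C(K)$ spaces are hereditarily $c_0$, whereas $X$, being complemented in the reflexive space $Z$, is itself reflexive, so $X^*$ cannot contain $c_0$. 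In other words, the obstruction is the reflexivity of $X$ inherited from $Z$, not any lower estimate satisfied by the basis of $Z$; to complete your argument you would need to replace your second step by something of this kind.
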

The following result follows from Proposition \ref{P:4.7} as well as from 
Corollary \ref{C:4.5}.
\begin{cor} A semi-normalized frame of an infinite dimensional 
 Hilbert space H   cannot have the NQP with respect to  the associated Hilbert space $\ell_2(\N)$.
\end{cor}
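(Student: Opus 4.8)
The plan is to argue by contradiction, and the two structural facts driving the argument are: \emph{(i)} by Proposition~\ref{P:2.4}, $X$ is isomorphic to a complemented subspace of the reflexive space $Z$, hence $X$ is itself reflexive (and infinite dimensional), so it contains a normalized weakly null sequence $(u_n)$; and \emph{(ii)} because $(z_i)$ is semi-normalized, the set of admissible quantized coefficient vectors is weakly compact. Concretely, assume the frame has the $Z$-NQP. By the homogeneity of the property, and after applying Proposition~\ref{P:4.2}, we may assume it holds with some $\vp<\tfrac12$; that is, writing
\[
\mathcal Q_C=\Bigl\{\,\delta\sum_{i} k_i z_i : (k_i)\in\coo(\N),\ k_i\in\Z,\ \Bigl\|\sum_i k_i\delta z_i\Bigr\|_Z\le C\,\Bigr\},
\]
every $x\in B_X$ lies within $\vp$ (in $X$) of $S(w)$ for some $w\in\mathcal Q_C$. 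Now $\mathcal Q_C$ is weakly closed: if $(w^{(m)})\subset\mathcal Q_C$ converges weakly to $w\in Z$ then, for each $i$, $z_i^*(w)=\lim_m z_i^*(w^{(m)})$ lies in the closed set $\delta\Z$, so $w=\delta\sum_i m_i z_i$ with $m_i\in\Z$; since this basis expansion converges in norm while $\inf_i\|z_i\|>0$, only finitely many $m_i$ are nonzero, so $w\in\mathcal Q_C$. As $\mathcal Q_C\subset C\,B_Z$ and $Z$ is reflexive, $\mathcal Q_C$ is weakly compact, and therefore so is $S(\mathcal Q_C)\subset X$.

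The second step is a gliding-hump selection. For each $n$ fix $w_n\in\mathcal Q_C$ with $\|u_n-Sw_n\|_X\le\vp$. Passing to a subsequence, $w_n\to w$ weakly with $w\in\mathcal Q_C$ (so $w$ has finite support); since $u_n\to0$ weakly and the norm is weakly lower semicontinuous, $\|Sw\|_X\le\vp$, while $\|Sw_n\|_X\ge1-\vp$, hence $\|S(w_n-w)\|_X\ge1-2\vp>0$. The vectors $w_n-w$ form a bounded, weakly null sequence of quantized vectors whose coordinates are bounded in absolute value by a fixed constant (the coordinate functionals of a semi-normalized basis are uniformly bounded). A Bessaga--Pe\l czy\'nski argument --- discarding negligible tails, which preserves quantization since the coordinates have the fixed step size $\delta$ and are bounded --- then yields, after a further subsequence, a block sequence $(\eta_n)$ with successive supports, with $\|\eta_n-(w_{k_n}-w)\|_Z\to0$, so that $\sup_n\|\eta_n\|_Z<\infty$ and (after deleting finitely many terms) $\inf_n\|S\eta_n\|_X\ge\tfrac12(1-2\vp)>0$, each $\eta_n$ still of the form $\delta\sum_i k_i z_i$ with $k_i\in\Z$.

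For the conclusion one invokes bounded completeness. Since $Z$ is reflexive, $(z_i)$ is boundedly complete, hence so is the block sequence $(\eta_n)$; as $\|\eta_n\|_Z\ge\|S\eta_n\|_X/\|S\|$ does not tend to $0$, the series $\sum_n\eta_n$ diverges, so $\bigl\|\sum_{n=1}^m\eta_n\bigr\|_Z\to\infty$. Likewise, after a subsequence making $(S\eta_n)$ basic, it is a boundedly complete basic sequence in the reflexive space $X$, so $\bigl\|\sum_{n=1}^m S\eta_n\bigr\|_X\to\infty$ too. Thus for each $m$ the vector $\zeta_m=\sum_{n=1}^m\eta_n$ is quantized with $\|S\zeta_m\|_X\to\infty$; normalizing $x_m=S\zeta_m/\|S\zeta_m\|_X$ and applying the NQP produces, for every $m$, a $\omega_m\in\mathcal Q_C$ (so $\|\omega_m\|_Z\le C$) with $\|x_m-S\omega_m\|_X\le\vp$. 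The contradiction must come from the fact that a single quantized vector of $Z$-norm at most $C$ cannot approximate, uniformly in $m$, the $m$ linearly independent weakly null directions $S\eta_1,\dots,S\eta_m$ that $x_m$ charges --- equivalently, that the (large) kernel of $S$ cannot absorb the discrepancy. Turning this into a genuine contradiction is the heart of the matter, and I expect it to require a finite-dimensional volume/counting estimate in the spirit of Proposition~\ref{P:4.3} applied to well-chosen finite sections, rather than pure soft analysis; reconciling the fixed step size $\delta$ and the weak compactness of $\mathcal Q_C$ with the blow-up produced by bounded completeness is the main obstacle.
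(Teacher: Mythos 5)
There is a genuine gap, and it sits exactly where you acknowledge it: your construction ends with vectors $x_m\in B_X$ and approximants $\omega_m\in\mathcal Q_C$, but nothing you have established is contradicted by their existence. A weakly compact subset of a reflexive space can perfectly well be $\vp$-dense in $B_X$ (take $B_X$ itself), so weak compactness of $\mathcal Q_C$, the gliding hump, and the bounded-completeness blow-up of $\bigl\|\sum_{n\le m}S\eta_n\bigr\|$ do not by themselves produce any tension with the NQP; the approximants $\omega_m$ are simply allowed to exist. The property you must exploit --- and never do --- is that $\mathcal Q_C$ consists of finitely supported vectors with coordinates in $\delta\Z$, hence is \emph{countable} (and weakly discrete except at limit points), which is a far more rigid restriction than weak compactness. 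Your closing suggestion that a volume/counting estimate in the spirit of Proposition~\ref{P:4.3} would finish the job points at a viable alternative (this is essentially the route of Corollary~\ref{C:4.5}), but it is not carried out, so the proposal does not constitute a proof.

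For comparison, the paper deduces the corollary from Proposition~\ref{P:4.7}, whose proof shares your preliminary reductions (reflexivity of $X$, weak compactness and finite supports of the elements of $\cB_{(C,\delta)}=\mathcal Q_C$) but then closes the argument softly using countability: since $S(\cB_{(C,\delta)})$ is $\vp$-dense in $B_X$ with $\vp<1$, the map $E:X^*\to C(\cB_{(C,\delta)})$, $E(x^*)\bigl(\sum\delta k_i z_i\bigr)=\sum\delta k_i\,S^*(x^*)(z_i)$, is an isomorphic embedding of $X^*$ into a space of continuous functions on a countable (weakly) compact set; such a $C(K)$ space is hereditarily $c_0$, while $X^*$ is reflexive (as $X$ is a quotient of the reflexive space $Z$), a contradiction. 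If you want to salvage your approach, the missing step is to replace the vague appeal to ``a single quantized vector cannot approximate $m$ directions'' by an actual use of the countability or $\delta$-separation of $\mathcal Q_C$ --- either through a $C(K)$/$c_0$ argument as above, or through a genuine finite-dimensional counting estimate as in Proposition~\ref{P:4.3} applied to the sections $X_n=P_n(X)$ as in Corollary~\ref{C:4.4}; as written, neither is present.
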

\begin{proof}[Proof of Proposition \ref{P:4.7}]
We assume w.l.o.g. that $(z_i)$ is bimonotone and let $T:X\to Z$ and $S:Z\to X$
 be the associated decomposition and reconstruction operator, respectively.

For $C<\infty$ and $\delta>0$ define 
$$\cB_{(C,\delta)}=\Big\{\sum_{i=1}^\infty \delta k_i z_i\in Z: (k_i)\subset \Z,
\|\sum_{i=1}^\infty \delta k_i z_i\|\le C\Big\}.$$ 
Assume that $(x_i,f_i)$ has the NQP with respect to $Z$. Then we can choose
$\delta>0$ small enough and $C\ge 1$ large enough so that $S(B_{(C,\delta)})$ is $\vp$-dense
in $B_X$ for some $0<\vp<1$.

Since $(z_i)$ is semi-normalized and $Z$ is
 reflexive, $\cB_{(C,\delta)}$ is weakly compact. Indeed, assume
that for $n\in\N$,
$$y_n=\sum_{i=1}^\infty \delta k_i^{(n)} z_i\in \cB_{(C,\delta)}.$$
After passing to  subsequence we can assume that for all
 $i\in\N$ there is a $k_i\in\N$ so that
  $k^{(n)}_i=k_i$ whenever $n\ge i$. Thus, by bimonotonicity,
  it follows that $\|\sum_{i=1}^n \delta k_iz_i\|\le C$, for all $n\in\N$,
  and, thus,  since $(z_i)$ is boundedly complete
   $\sum_{i=1}^\infty \delta k_iz_i\in Z$ and 
$\|\sum_{i=1}^\infty \delta k_iz_i\|\le C$. Thus,
 $\sum_{i=1}^\infty \delta k_iz_i$ in $B_{(C,\delta)}$,
 and since $k_i^{(n)}$ converges point-wise to $(k_i)$ and $(z_i)$ is shrinking it is the weak limit
 of $y^{(n)}$. The support of each element in $B_{(C,\delta)}$ is finite
  since $(z_i)$ is a semi-normalized basis, and thus $B_{(C,\delta)}$ is 
 countable. Since $S(B_{(C,\delta)})$ is $\vp$-dense in $X$, it follows that
the map
$$ E: X^*\to C(\cB_{(C,\delta)}), \text{ with }
  E(x^*)\Big(\sum \delta k_i z_i\Big)=\sum \delta k_i S^*(x^*)(z_i),$$
is an isomorphic embedding.
Indeed for $x^*\in B_X^*$ there is an $x\in B_X$ so that $|x^*(x)|=1$ and a sequence $(k_i)\in \Z\cap \coo$ so that
 $\|x-\sum \delta k_i x_i\|\le \vp$, and thus
$$\|E(x^*)\|\ge \Big| E(x^*)\Big(\sum \delta k_i z_i\Big)\Big|=x^*\Big(\sum \delta k_i x_i\Big)= 1+
x^*\Big(\sum \delta k_i x_i-x\Big)\ge 1-\vp.$$
But this  would means that $X^*$ is isomorphic to a subspace of the space of continuous functions on a countable compact space,
 and, thus, hereditarily $c_0$, which is impossible since $X$ is a quotient of a reflexive space and thus 
also reflexive.\end{proof}

\section{Quantization and Cotype}\label{S:5}

In this section we consider a quantization concept for Schauder frames, which is independent of an associated space.

\begin{defin}\label{D:5.1}  Let $(x_i,f_i)_{i\in\J}$ be a frame of a (finite or infinite dimensional) Banach space $X$, $\J=\N$ or $\J=\{1,2,\ldots  N\}$, for some $N\in\N$, and let  $0<\vp$,  $0<\delta\le1$ and $1\le C<\infty$. We say that
  $(x_i,f_i)_{i\in\J}$  satisfies the {\em $(\vp,\delta,C)$-Bounded Coefficient Net Quantization Property} or  
$(\vp,\delta,C)$-BCNQP if for all $(a_i)_{i\in\J}\in[-1,1]^\J\cap \coo(\J)$ there is a  $(k_i)_{i\in\J}\in \Z^{\J}\cap \coo(\J)$ so that
$$\Big\|\sum_{i\in\J} a_i x_i -\sum_{i\in\J}\delta k_ix_i\Big\|\le \vp \text{ and } \max_{i\in\J} |k_i|\le \frac{C}{\delta}.$$

\end{defin}

\begin{remark}\label{R:5.2} 
 Let $(x_i,f_i)_{i\in\J}$ be a frame of $X$ and let $0<\vp$, $0<\delta\le1$ and $1\le C<\infty$.
 
\begin{enumerate}
\item[a)]
Since for any $(a_i)_{i\in\J}\in \coo(\J)$ and any $i\in\J$ we can write
 $a_i=m_i\delta+\tilde a_i$ with $m_i\in\N$, $|m_i|\delta\le |a_i|$ and 
 $|\tilde a_i|\le \delta$, for $i\in\J$,  $(x_i,f_i)$ satisfies the $(\vp,\delta,C)$-BCNQP 
implies that 
\begin{align}\label{E:5.2.1} &\text{for all $(a_i)_{i\in\J}\in \coo(\J)$ there is a  $(k_i)_{i\in\J}\in \Z^{\J}\cap \coo(\J)$ so that}\qquad\qquad\qquad\\
&\qquad\Big\|\sum_{i\in\J} a_i x_i -\sum_{i\in\J}\delta k_ix_i\Big\|\le \vp \text{ and } \max_{i\in\J}  |k_i|\le \max_{i\in\J} |a_i|+\frac{C}{\delta}.\notag\end{align}

(a) immediately    implies 

\item[b)] If $(x_i,f_i)$ satisfies $(\vp,\delta,C)$-BCNQP and $0<\lambda \le 1$ then $(x_i,f_i)$ satisfies 
  $(\lambda\vp,\lambda\delta,1+\lambda C)$-BCNQP.
\item[c)] If $(x_i)$ is a semi-normalized basis of $X$ and $(f_i)$ are  the coordinate functionals  with respect to  $(x_i)$ and $(x_i)$ satisfies the 
$(\vp,\delta)$-NQP (Definition \ref{D:3.1}), then $(x_i,f_i)$ satisfies the    $(\vp,\delta,C)$-BCNQP  with 
$C=1+\vp\sup_{i\in\J}\|f_i\|$. Indeed, for $x\in X$, $x=\sum_{i=1} a_i x_i$, with $|a_i|\le 1$,
  there is a sequence $(k_i)\in \Z$ with finite support so that $\big\|x-\sum \delta k_i x_i\big\|\le \vp$ and
$$\delta\max |k_i|\le \max_i \Big(|f_i(x)|   + \Big|f_i\Big(x-\sum \delta k_i x_i\Big)\Big|\Big)\le 1+\vp\sup_{i\in\J}\|f_i\|.$$

\end{enumerate}
\end{remark}

We will connect the property BCNQP with properties of the {\em cotype } of the Banach space. 

\begin{defin}\label{D:5.3}
Let $p\le 2$.   We say that a Banach space $X$ {\em has type $p$} if there is a $c<\infty $ so that for all $n\in\N$ and all vectors $x_1,x_2,\ldots x_n\in X$ 
$$\Big(\text{ave}\Big\| \sum_{i=1}^n\pm x_i\Big\|^2\Big)^{1/2}
=
  \Big( 2^{-n}\sum_{(\sigma_i)_{i=1}^n\in\{\pm1\}^n} \Big\|\sum_{i=1}^n\sigma_i x_i\Big\|^2\Big)^{1/2}
\le c \Big(\sum_{i=1}^n \|x_i\|^p\Big)^{1/p}.$$
In that case the smallest such $c$ will be denoted by $T_p(X)$.

Let $q\ge 2$. We say that a Banach space $X$ {\em has cotype $q$} if there is a $c<\infty $ so that
for all $n\in\N$ and all $x_1,x_2,\ldots x_n\in X$:
$$\Big(\sum_{i=1}^n \|x_i\|^q\Big)^{1/q}\le c \Big(\text{ave}\Big\| \sum_{i=1}^n\pm x_i\Big\|^2\Big)^{1/2}
=
  c\Big( 2^{-n}\sum_{(\sigma_i)_{i=1}^n\in\{\pm1\}^n} \Big\|\sum_{i=1}^n\sigma_i x_i\Big\|^2\Big)^{1/2}.
$$
The smallest of all these constants will be denoted by $C_q(X)$.

We say that $X$ has only trivial type, or only  trivial cotype if $T_P(X)=\infty$ for all $p>1$, or
 $C_q(X)=\infty$, for all $q<\infty$. 
\end{defin}

Basic properties of spaces with type and cotype can be found for example in \cite{DJT} or \cite{Pi2}. We are mainly interested in estimates 
 of the volume ratio of  the unit ball $B_X$ of a finite dimensional space $X$ 
  using $C_q(X)$ and  the connection between finite cotype and the lack of containing $\ell_\infty^n$'s uniformly. 

Assume 
$X$ is an $n$-dimensional space which we identify with $(\R^n, \|\cdot\|)$.
 Let $E$ be the John ellipsoid of the unit ball $B_X$ of $X$,  i.e. 
the  ellipsoid contained in $B_X$ having maximal volume.
 It was show in  \cite{Jo} (see also \cite[Chapter 3]{Pi2}) that $E$ is unique. 
 We call the ratio $\Vol^{1/n}(B_X)/\Vol^{1/n}(E)$ the {\em  volume ratio of $B_X$}.
 Combining
\cite[Theorem 6]{Ro}, which establishes an upper estimate for the volume ratio using  $T_p(X^*)$, with 
 a result of  
 Maurey and  Pisier \cite{MP1,MP2} (see also \cite[Proposition 13.17]{DJT}) estimating $T_p(X^*)$ and  a result
   of Pisier (\cite{Pi1} (see also \cite[Theorem 2.5] {Pi2}) estimating the $K$-convexity constant $K(X)$ of
 $X$,   we obtain the  connection between the 
 volume ratio of $B_X$ and the
 cotype constant of $X$.
\begin{thm}\label{T:5.3}
There is a  universal constant $d$ so that for all finite dimensional Banach spaces $X$, with $n=\dim(X)\ge 2$, and all $2\le q<\infty$.
 \begin{equation}\label{E:5.3.1}
\left(\frac{\Vol(B_X)}{\Vol(E)} \right)^{1/n}\le d C_q(X) n^{\alpha(q)}\ln n,
\end{equation}
where $E\subset X$ is the John ellipsoid of $B_X$ and $\alpha(q):= \frac12 -\frac1q$.
\end{thm}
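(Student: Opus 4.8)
The plan is to assemble Theorem \ref{T:5.3} by chaining together three known ingredients that are cited in the paragraph preceding the statement, each of which controls one link between the volume ratio of $B_X$ and the cotype constant $C_q(X)$. Write $n=\dim X$ and let $E$ be the John ellipsoid of $B_X$. First I would invoke Rogers--Zhang type estimate (here attributed to \cite{Ro}, Theorem 6): the volume ratio $\big(\Vol(B_X)/\Vol(E)\big)^{1/n}$ is bounded by an absolute constant times $T_p(X^*)$ for any $1<p\le 2$, i.e.\ the dual type-$p$ constant governs how far $B_X$ can be from its John ellipsoid. This reduces the problem to bounding $T_p(X^*)$ in terms of the cotype of $X$.

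The second step is the Maurey--Pisier estimate \cite{MP1,MP2} (see \cite[Proposition 13.17]{DJT}): if $X$ has cotype $q$ with constant $C_q(X)$, then $X^*$ has type $p$ for the conjugate-like exponent, with $T_p(X^*) \lesssim K(X)\, C_q(X)$, where $K(X)$ is the $K$-convexity constant of $X$ — the point being that duality between type and cotype is only \emph{iso}morphic up to the $K$-convexity factor, so this factor must be carried along. Choosing $p$ so that $1/p = 1 - 1/q$ (so $p$ is the conjugate exponent of $q$, giving $1/p = 1/2 + \alpha(q)$ when combined with the $n$-dependence below), one gets $\big(\Vol(B_X)/\Vol(E)\big)^{1/n} \lesssim K(X)\, C_q(X)$.

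The third step handles the $K$-convexity constant, which for a general space can be infinite but for an $n$-dimensional space is always controlled by dimension: Pisier's theorem \cite{Pi1} (see \cite[Theorem 2.5]{Pi2}) gives $K(X) \lesssim \ln n$ for every $n$-dimensional $X$ (in fact $K(X)\le c(1+\ln n)$). Actually the cleaner route is to combine the dimensional bound on $K(X)$ with the choice of exponent so that the $p$ appearing in Rogers' estimate produces the factor $n^{\alpha(q)}$: one does not apply Maurey--Pisier with the conjugate exponent directly but rather uses that $X^*$ always has type $p$ with constant at most a dimensional multiple, and optimizing the trade-off between the $T_p(X^*)$ bound (which blows up as $p\to 2$) and the cotype-$q$ hypothesis yields the $n^{\alpha(q)}$ term with $\alpha(q)=\tfrac12-\tfrac1q$. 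Multiplying the three estimates gives the claimed bound $\big(\Vol(B_X)/\Vol(E)\big)^{1/n}\le d\, C_q(X)\, n^{\alpha(q)}\ln n$ for a universal $d$.

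The main obstacle — and really the only nonroutine part, since all three cited results are quoted as black boxes — is bookkeeping the exponents and constants so that they line up to produce exactly $n^{\alpha(q)}\ln n$ rather than some other power of $n$ and $\ln n$. In particular one must be careful that the $K$-convexity factor contributes the single $\ln n$ and that the interplay between the type exponent $p$ used in \cite{Ro} and the cotype exponent $q$ in the hypothesis is arranged (by the substitution $1/p=1-1/q$ or an optimization over $p$) so that the dimensional loss is precisely $n^{1/2-1/q}$; verifying that $C_q(X)$ enters linearly (no extra powers) also requires tracking that each of the three inequalities is linear in the relevant constant. Once the exponent arithmetic is pinned down, the proof is just "$\le \cdot \le \cdot \le$".
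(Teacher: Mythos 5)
Your proposal takes the same route the paper does: chain Rogalski's volume-ratio estimate in terms of $T_p(X^*)$ \cite{Ro}, the Maurey--Pisier duality (cotype $q$ of $X$ plus $K$-convexity gives type $q'$ of $X^*$, with $T_{q'}(X^*)\lesssim K(X)C_q(X)$) \cite{MP1,MP2,DJT}, and Pisier's bound $K(X)\lesssim\ln n$ for $n$-dimensional spaces \cite{Pi1,Pi2}. The paper gives no more detail than the citation chain in the preceding paragraph, so in substance you have reconstructed the same argument.

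Where you go off the rails is the exponent bookkeeping, precisely the part you flag as "the only nonroutine part." In your step two you assert that setting $1/p=1-1/q$ already yields $\big(\Vol(B_X)/\Vol(E)\big)^{1/n}\lesssim K(X)C_q(X)$ with no dimensional factor, and then in step three you try to conjure $n^{\alpha(q)}$ out of an unspecified "optimization over $p$." That is not where the power of $n$ comes from, and no optimization is needed. Rogalski's Theorem 6 (the version being invoked) bounds the volume ratio by an absolute constant times $T_p(X^*)\,n^{1/p-1/2}$ for $1\le p\le 2$; equivalently, one can state Rogalski in terms of $T_2(X^*)$ and then pay $n^{1/p-1/2}$ to pass from $T_p$ to $T_2$ in an $n$-dimensional space. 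Either way the exponent on $n$ is fixed once you choose $p$. The Maurey--Pisier step forces $p=q'$, and then $1/p-1/2=(1-1/q)-1/2=1/2-1/q=\alpha(q)$ falls out automatically; Pisier's $K$-convexity estimate contributes the single $\ln n$. So the dimensional loss $n^{\alpha(q)}$ is a fixed feature of Rogalski's inequality at the forced exponent $p=q'$, not the output of a trade-off. With that correction your three-step chain is exactly the paper's intended derivation.
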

We will also need a second upper estimate for the volume ratio due to Milman and Pisier \cite{MiP}.
\begin{thm}\label{T:5.3a}{\rm\cite{MiP}}(see also \cite[Theorem 10.4] {Pi2})
There is a  universal  constant $A$ so that for any finite dimensional Banach space $X$,
 \begin{equation}\label{E:5.3a.1}
\left(\frac{\Vol(B_X)}{\Vol(E)} \right)^{1/n}\le g(C_2(X)):= A C_2(X) \ln (1+C_2(X))
\end{equation}where $E\subset X$ is the John ellipsoid of $B_X$.
\end{thm}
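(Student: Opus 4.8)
This is the volume-ratio estimate of Milman and Pisier \cite{MiP}, and I would follow the line of argument there. By affine invariance of the volume ratio we may put $X$ in a position in which its John ellipsoid $E$ is the Euclidean ball $B_{\ell_2^n}$; then $\|x\|_X\le|x|_2$ for all $x$, and $\Vol(E)=v_n:=\Vol(B_{\ell_2^n})$. Write $M^*(X)=\int_{S^{n-1}}\|y\|_{X^*}\,d\sigma(y)$ for the average over the Euclidean sphere in its normalized rotation-invariant measure. The starting point is the soft estimate
\begin{equation*}
\Big(\frac{\Vol(B_X)}{\Vol(E)}\Big)^{1/n}\le M^*(X),
\end{equation*}
valid in any position with $B_{\ell_2^n}\subset B_X$: by the Blaschke--Santal\'o inequality $\Vol(B_X)\,\Vol(B_{X^*})\le v_n^2$, while the polar-coordinate formula $\Vol(B_{X^*})/v_n=\int_{S^{n-1}}\|y\|_{X^*}^{-n}\,d\sigma(y)$ together with Jensen's inequality, applied first to $t\mapsto t^n$ and then to $t\mapsto 1/t$, gives $(\Vol(B_{X^*})/v_n)^{1/n}\ge 1/M^*(X)$. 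So the problem reduces to producing one position of $X$ with $B_{\ell_2^n}\subset B_X$ in which $M^*(X)\le A\,C_2(X)\ln(1+C_2(X))$.

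To control $M^*$ I would extract from cotype $2$ the ``weak cotype $2$'' structure of $X$. In John's position, John's theorem gives contact points $x_1,\dots,x_m\in S^{n-1}$ with $\|x_j\|_X=1$ and weights $c_j>0$ satisfying $\sum_j c_j=n$ and $\sum_j c_j\,x_j\otimes x_j=\Id$. Applying the cotype-$2$ inequality to the vectors $\sqrt{c_j}\,x_j$ yields $\big(\text{ave}\|\sum_j\pm\sqrt{c_j}\,x_j\|_X^2\big)^{1/2}\ge\sqrt n/C_2(X)$, while this average is at most $\big(\text{ave}|\sum_j\pm\sqrt{c_j}\,x_j|_2^2\big)^{1/2}=\sqrt n$; combined with Gaussian concentration this forces all but a proportion $\delta=\delta(C_2(X))$ of Euclidean directions $x\in S^{n-1}$ to satisfy $\|x\|_X\gtrsim|x|_2$, i.e.\ $B_X$ is trapped between $B_{\ell_2^n}$ and a body that, off a subspace of dimension at most $\delta n$, is already a Euclidean ball. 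Stratifying $B_{X^*}$ accordingly bounds $M^*(X)$, but a single such step carries an extraneous $\sqrt{\ln n}$-type loss --- this is visible for $\ell_1^n$ in its John position, where $M^*\sim\sqrt{\ln n}$ although the volume ratio of $\ell_1^n$ is bounded. For the same reason a direct appeal to Theorem \ref{T:5.3}, which already carries a factor $\ln n$, does not yield the dimension-free bound.

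The genuinely hard step, which I would import from \cite{MiP}, is the passage from this weak-cotype-$2$ structure to a dimension-free bound by iteration: peel off a Euclidean block of dimension $\lfloor(1-\delta)n\rfloor$, pass to a complementary piece of strictly smaller dimension which again has cotype $2$ with the same constant (so the same $\delta$), and repeat, for $O(\ln n)$ stages. The crux --- and the main obstacle --- is that the losses accumulated at the successive stages must be arranged to sum to $O(\ln(1+C_2(X)))$ rather than merely to $O(\ln n)$; this requires controlling how the relevant geometric parameters propagate down the iteration, and it is precisely the quantitative heart of the weak-cotype-$2$ theory of \cite{MiP}. I would quote that result rather than reprove it, obtaining $(\Vol(B_X)/\Vol(E))^{1/n}\le A\,C_2(X)\ln(1+C_2(X))$.
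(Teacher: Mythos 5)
The paper offers no proof of this statement at all: it is quoted verbatim from Milman--Pisier \cite{MiP} (see also \cite[Theorem 10.4]{Pi2}), and your proposal ultimately does the same thing, explicitly importing the weak-cotype-2 iteration from \cite{MiP}, so in substance you and the paper take the identical route of citation. Your surrounding remarks are sound where they matter --- the reduction $\bigl(\Vol(B_X)/\Vol(E)\bigr)^{1/n}\le M^*(X)$ via Blaschke--Santal\'o and Jensen is correct, and your observation that a single-step bound on $M^*$ in John's position must lose a $\sqrt{\ln n}$ factor (as $\ell_1^n$ shows) correctly explains why the quoted theorem, rather than Theorem \ref{T:5.3}, is needed for a dimension-free estimate --- while the heuristic middle step about ``most directions'' is loose but not load-bearing, since you defer the actual argument to \cite{MiP}.
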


The next result describes the  connection between the property of having a finite cotype for $q<\infty$ and the lack of of containing $\ell_\infty^n$'s uniformly.

\begin{thm}\label{T:5.4}{\rm \cite{MP1,MP2}} For $N\in \N$  there is 
  a $q(N)\in(2,\infty)$ and a $C(N)<\infty$ so that:
  \begin{align}\label{E:6.1.5}
&\text{For  any (finite or infinite dimensional) Banach space $X$   which does not 
}\\ 
&\text{contain a 2-isomorphic copy of $\ell_\infty^N$  we have  that }C_{q(N)}(X)\le C(N).\notag
\end{align}
\end{thm}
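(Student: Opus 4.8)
The plan is to derive this as a direct consequence of the Maurey–Pisier theorem on the relationship between cotype and finite representability of $\ell_\infty$. Recall that the deep input (which I will cite as \cite{MP1,MP2}) states: a Banach space $X$ has finite cotype (i.e. $C_q(X)<\infty$ for some $q<\infty$) if and only if $X$ does not contain $\ell_\infty^n$'s uniformly, that is, if and only if there exists $\lambda\ge 1$ so that $\ell_\infty^n$ does \emph{not} embed $\lambda$-isomorphically into $X$ for all $n$. The statement to be proved is a uniform quantitative version of the contrapositive direction, and the natural strategy is a compactness-by-contradiction argument in which the constants $q(N)$ and $C(N)$ are extracted abstractly rather than computed.

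First I would fix $N\in\N$ and the isomorphism constant $2$, and argue by contradiction: suppose no finite pair $(q,C)$ works. Then for every $k\in\N$ there is a Banach space $X_k$ which contains no $2$-isomorphic copy of $\ell_\infty^N$ but for which $C_k(X_k)>k$ — here one uses the sequence $q=k$, so that simultaneously the cotype exponent is forced to $\infty$ and the constant blows up. The point is now to manufacture a single space that contradicts the Maurey–Pisier dichotomy. I would pass to an ultraproduct $X=\bigl(\prod X_k\bigr)_{\mathcal U}$ along a nonprincipal ultrafilter $\mathcal U$ on $\N$. Two standard facts about ultraproducts finish the job: (i) the property of not containing a $2$-isomorphic copy of $\ell_\infty^N$ is inherited by the ultraproduct, since $\ell_\infty^N$ is finite-dimensional and a $2$-embedding into $X$ would, by a perturbation/finite-dimensionality argument, yield a $2'$-embedding into one of the $X_k$ for $\mathcal U$-many $k$ for any $2'>2$ (and one arranges the contradiction with a slightly smaller target constant, e.g. working with $3/2$-embeddings throughout); and (ii) cotype constants pass to ultraproducts, so $C_q(X)\le \liminf_k C_q(X_k)$ for each fixed $q$, whence the blow-up $C_k(X_k)>k$ forces $C_q(X)=\infty$ for every finite $q$, i.e. $X$ has only trivial cotype.

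But a space with only trivial cotype must, by the Maurey–Pisier theorem, contain $\ell_\infty^n$'s uniformly — in particular it contains a $2$-isomorphic copy of $\ell_\infty^N$ — contradicting (i). This contradiction yields the existence of the desired finite $q(N)$ and $C(N)$. The one genuine subtlety, and the step I expect to require the most care, is fact (i): one must ensure the bookkeeping of isomorphism constants is consistent, i.e. set up the contradiction so that ``no $2$-copy in each $X_k$'' survives to ``no $2$-copy in $X$.'' This is handled by the routine device of doing the whole argument with a strictly smaller constant (say $3/2$) in the hypotheses on the $X_k$ and concluding a $2$-embedding in $X$, using that for finite-dimensional subspaces an isomorphism of the ultraproduct is approximated arbitrarily well by isomorphisms of the factors (the local structure of $X$ is exactly the finitely-representable-in-$\{X_k\}$ structure). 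Alternatively, and perhaps more cleanly for a write-up, one can avoid ultraproducts entirely and simply invoke the Maurey–Pisier result in its already-quantified form: their work in fact produces, for each pair $(\lambda, N)$, the required $q$ and $C$; in that case the ``proof'' is a one-line citation, and the theorem is stated here only to package the dependence $N\mapsto (q(N),C(N))$ in the form we shall use in the sequel.
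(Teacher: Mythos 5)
The paper does not prove Theorem~\ref{T:5.4}; it is cited directly from Maurey--Pisier \cite{MP1,MP2}, whose work already contains the uniform quantitative statement (this is the ``already-quantified form'' you mention at the end). Your ultraproduct-by-contradiction argument is nevertheless a correct and genuinely different route: it derives the uniform version from the weaker qualitative dichotomy (finite cotype if and only if the $\ell_\infty^n$'s are not uniformly contained), which is the standard compactness device for extracting uniform constants from a pointwise theorem. Two small remarks on your write-up. First, the bookkeeping of constants in step (i) is simpler than you fear: there is no need to shrink the hypotheses to $3/2$-embeddings. Since the Maurey--Pisier dichotomy gives $(1+\vp)$-copies of $\ell_\infty^N$ in the ultraproduct $X$ for every $\vp>0$, and since $X$ is finitely representable in the family $\{X_k\}$, such a copy yields a $(1+\vp)^2$-copy in $X_k$ for $\mathcal U$-many $k$; taking $\vp$ small enough that $(1+\vp)^2<2$ already contradicts ``no $2$-copy of $\ell_\infty^N$ in any $X_k$.'' Second, in your diagonal choice $q=C=k$, it is worth saying explicitly that $C_q(\cdot)$ is non-increasing in $q$, so for each fixed $q$ one has $C_q(X_k)\ge C_k(X_k)>k$ once $k\ge q$; this is what makes $C_q(X)=\infty$ for every finite $q$ in the ultraproduct. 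With those two clarifications the argument is complete, though for the purposes of this paper the direct citation suffices and is what the authors intend.
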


Finally we will need the following 
 result from \cite{Os}.  It is implicitly already
contained in  \cite[pp.95--97]{GMP}, and it has  probably been  known for much longer.

In order to state it we will need the following notation. Let $m\le n\in\N$ and let $L\subset  \R^n$ be an $m$-dimensional
subspace. Let $Q_n$ be the unit cube in $\R^n$ .  By a simple compactness argument there is a projection $P:\R^n\to L$ for which
$\Vol(P(Q_n))$ is minimal. In that case we call the image  $P(Q_n)$ a {\em minimal-volume projection of $Q_n$ onto $L$}. 

\begin{thm}\label{T:5.5} \cite[Theorem 1]{Os} Let $L$ be a linear subspace of $\R^n$, and let $\cM$ be the set of
all minimal volume projections of $Q_n$ onto $L$. 

Then $\cM$ contains a parallelepiped.
\end{thm}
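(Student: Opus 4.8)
The plan is to reduce the statement to a counting/dimension argument about the vertices and facets of the zonotope $P(Q_n)$, where $P$ is a minimal-volume projection of $Q_n$ onto $L$. First I would recall the basic structure: the image of the cube $Q_n = [0,1]^n$ (or $[-1,1]^n$) under a linear map $P$ is a zonotope, namely the Minkowski sum $\sum_{i=1}^n [0, P e_i]$ of the $n$ segments spanned by the columns of $P$, where $e_1,\dots,e_n$ is the standard basis of $\R^n$. The volume of such a zonotope in the $m$-dimensional space $L$ has the classical Shephard/McMullen formula
\begin{equation*}
\Vol(P(Q_n)) = \sum_{\substack{I \subset \{1,\dots,n\} \\ \#I = m}} \bigl| \det\bigl( P e_i : i \in I \bigr) \bigr|,
\end{equation*}
a sum of absolute values of the $m\times m$ minors of the matrix of $P$. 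This formula is the main computational engine, so I would state it (or cite a reference) and then analyze when it is minimized.

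Next I would exploit minimality. Consider varying $P$ among all projections onto $L$; equivalently, fix a splitting $\R^n = L \oplus L'$ with $L' = \ker P$, and note that the only freedom is the choice of the complement $L'$. The key observation is that if there is some subset $I$ with $\#I = m$ for which the minor $\det(Pe_i : i\in I)$ vanishes and also some other nonvanishing minor, one can typically perturb $L'$ so as to strictly decrease the volume — the absolute-value function $|\cdot|$ has a corner at $0$, so a minimizer should sit at a configuration where the minors are "as degenerate as possible." More precisely, I would argue that at a minimum, after relabeling coordinates, the last $n-m$ columns of $P$ can be taken to vanish on a suitable set, forcing $P$ to factor through the coordinate projection onto $\R^I$ for a single $m$-subset $I$; then $P(Q_n)$ is the image of the $m$-cube $Q_m$ under an invertible linear map, hence a parallelepiped. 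Concretely: one shows that a minimal-volume projection can be chosen so that $\ker P$ is spanned by $n-m$ of the coordinate vectors $e_j$ (equivalently, $P$ kills $n-m$ of the cube's edge directions), and then $P(Q_n)$ is exactly the parallelepiped spanned by the images of the remaining $m$ edge directions.

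The main obstacle is making the perturbation/optimality argument rigorous: one must show that the subdifferential of $P \mapsto \sum_I |\det(Pe_i : i\in I)|$ at a minimizer forces the "collapse" onto a single coordinate subspace, ruling out genuinely mixed minimizers where several minors are simultaneously nonzero and balanced. I expect this to require a careful first-variation (or Lagrange-multiplier) computation on the Grassmannian of complements $L'$, handling the non-smoothness at zero minors by a convexity or piecewise-linearity observation — indeed $\Vol(P(Q_n))$ as a function of the relevant parameters is piecewise given by a sum of $\pm$ minors, hence piecewise multilinear, so a minimum over the compact Grassmannian is attained at a "vertex" of this piecewise structure, and at such a vertex the zonotope degenerates to a parallelepiped. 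Once that structural fact is in hand, the conclusion that $\cM$ contains a parallelepiped is immediate. I would also remark that this matches the heuristic from \cite[pp.~95--97]{GMP}: the cheapest way to squeeze the cube into $L$ is to project along coordinate directions.
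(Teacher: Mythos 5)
You should first note that the paper contains no proof of Theorem~\ref{T:5.5} to compare against: it is quoted from \cite{Os} (and, as the authors remark, is implicit in \cite[pp.~95--97]{GMP}). Your reduction is aimed at the right target --- it does suffice to produce one minimal-volume projection whose kernel is spanned by $n-m$ coordinate vectors, and the zonotope formula $\Vol(P(Q_n))=\sum_{\#I=m}\bigl|\det_L(Pe_i: i\in I)\bigr|$ is the right computational tool --- but the step you yourself flag as the main obstacle is genuinely missing, and the mechanism you propose for it is wrong as stated. A first-variation/subdifferential argument at an arbitrary minimizer would have to show that minimality forces the zonotope to degenerate, and that is false: minimizers need not be parallelepipeds. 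For instance, for $L=(1,1,1)^{\perp}\subset\R^3$ the orthogonal projection of $Q_3$ onto $L$ (a regular hexagon) and the projection along $e_3$ (a parallelogram) both have area $\sqrt3$, hence both are minimal-volume shadows; so one cannot argue pointwise that ``at a minimum the configuration collapses onto a single coordinate $m$-subset.'' Likewise ``piecewise multilinear, hence the minimum sits at a vertex'' has no force on the Grassmannian (which has no vertices, and multilinearity does not place minima at degenerate configurations), and the corner of $|\cdot|$ at $0$ works against, not for, a strict decrease through a vanishing minor.

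What does work --- and is essentially the \cite{GMP} argument --- is a global H\"older/extreme-point argument in Pl\"ucker coordinates rather than a local perturbation. Let $X$ be an orthonormal basis matrix of $L$ and $W$ an $n\times(n-m)$ basis matrix of $N=\ker P$. Pulling back the volume form of $L$ under $P$ (it is proportional to the contraction of $dx_1\wedge\cdots\wedge dx_n$ by the columns of $W$) gives $\det_L(Pe_i:i\in I)=\det[(e_i)_{i\in I},W]/\det[X,W]$, so by the zonotope formula and Laplace expansion $\Vol(P(Q_n))=\bigl(\sum_{\#J=n-m}|W_J|\bigr)/\bigl|\det[X,W]\bigr|$, where $W_J$ are the maximal minors of $W$; in other words $\Vol(P(Q_n))=\|p(N)\|_1/\bigl|\langle p(L),p(N)\rangle\bigr|$ in Pl\"ucker coordinates. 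Since $|\det[X,W]|=\bigl|\sum_J \pm\det(X_{J^c})\,W_J\bigr|\le \max_{\#I=m}|\det X_I|\cdot\sum_J|W_J|$, every complement $N$ satisfies $\Vol(P(Q_n))\ge 1/\max_{\#I=m}|\det X_I|$, and equality holds for the choice $W=(e_j)_{j\in J^*}$ with $J^*$ the complement of a maximizing $I$. The crucial point your sketch does not supply is exactly this: the extreme points of the $\ell_1$ ball are single Pl\"ucker coordinates, which are decomposable and hence realized by honest (coordinate) subspaces, so the unconstrained linear bound is attained on the Grassmannian. That yields a minimal-volume projection with coordinate kernel, whose image is the parallelepiped spanned by $P(e_i)$, $i\in I^*$, proving the theorem; without this (or some substitute for your perturbation step), your plan does not close.
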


We are now in the position to state and to prove the connection
between cotype and BCNQP in the finite dimensional case.

\begin{thm}\label{T:5.6} 
There is a map
$n_0: [1,\infty)^2\to [0,\infty)$
so that for   all finite dimensional Banach spaces  $X$ the following holds. 

If  $(x_i,f_i)_{i=1}^N$  is a frame of $X$, with $\|x_i\|=1$, for $i\le N$, and  which  satisfies $(1,\delta,C)$-BCNQP for some $0<\delta<1$ and $C\ge 1$,
then  for all $2\le q<\infty$
$$N\ge \dim(X)\ln(\dim(X)) \frac{1}{2q\ln \big(1+2\frac{C}\delta\big)} ,\text{ whenever 
$\dim(X)\ge n_0\Big(\frac C\delta, K C_q(X)\Big)$},$$
where $K$ is the projection constant of $(x_i,f_i)_{i=1}^N$.
\end{thm}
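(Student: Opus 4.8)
The goal is a lower bound on the length $N$ of a frame of an $n$-dimensional space $X$ that satisfies the BCNQP, obtained by combining a volume-counting argument with the upper bounds on volume ratios from Theorems \ref{T:5.3}, \ref{T:5.3a} and the projection theorem \ref{T:5.5}. The overall strategy is: (1) translate the $(1,\delta,C)$-BCNQP into a statement that a certain finite set of $\delta$-quantized frame combinations is $1$-dense in the zonotope $Z_X=\{\sum_{i=1}^N a_ix_i:|a_i|\le 1\}$; (2) bound the cardinality of that finite set from above by a function of $N$, $C$ and $\delta$; (3) bound from below the number of such points needed to be $1$-dense in $Z_X$ using the volume of $Z_X$ and the volume ratio of $B_X$, which by Theorems \ref{T:5.3} and \ref{T:5.3a} is controlled by $KC_q(X)$; and (4) compare the two estimates, taking logarithms, to extract the stated inequality once $n=\dim(X)$ is large enough in terms of $C/\delta$ and $KC_q(X)$.

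First I would make the counting precise. If $(x_i,f_i)_{i=1}^N$ has $(1,\delta,C)$-BCNQP then every $x=\sum a_ix_i$ with $|a_i|\le 1$ is $1$-approximated by some $\sum\delta k_ix_i$ with $|k_i|\le C/\delta$; hence the set $\mathcal F=\{\sum_{i=1}^N\delta k_ix_i:(k_i)\subset\Z,\ |k_i|\le C/\delta\}$ is $1$-dense in the zonotope $Z_X$. Since each $k_i$ ranges over at most $2C/\delta+1$ values, $\#\mathcal F\le(2C/\delta+1)^N$, so $\ln\#\mathcal F\le N\ln(1+2C/\delta)$. For the matching lower bound on $\#\mathcal F$ I would use that $\mathcal F$ being $1$-dense in $Z_X$ forces $\#\mathcal F\cdot\Vol(B_X)\ge\Vol(Z_X)$ (covering $Z_X$ by unit balls centred at the points of $\mathcal F$), so I need good lower estimates for $\Vol(Z_X)^{1/n}$. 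The zonotope $Z_X$ is the image of the cube $Q_N=[-1,1]^N$ under the operator $R:(a_i)\mapsto\sum a_ix_i$, which factors through $X$; here I would invoke Theorem \ref{T:5.5} to replace $Z_X$ up to volume by a parallelepiped that is a minimal-volume projection of $Q_N$ onto an $n$-dimensional subspace, and use that a minimal-volume projection of the cube has $n$-th root of volume bounded below by a universal constant times $N^{?}$ — more precisely, comparing $\Vol(Z_X)$ with $\Vol(B_X)$ and with $\Vol(E)$ (the John ellipsoid), and using that $Z_X\supset B_X$ (since $\|x_i\|=1$ and the frame reconstructs vectors, so $B_X$ is contained in a bounded multiple of $Z_X$, with the bound governed by the projection constant $K$). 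This is the step where $K=K(x_i,f_i)$ enters: the analysis/reconstruction identity $x=\sum f_i(x)x_i$ together with $\|f_i\|\le K$ shows $B_X\subseteq K^{-1}Z_X$ after rescaling, hence $\Vol(Z_X)^{1/n}\ge c\,K^{-1}\Vol(B_X)^{1/n}$, and on the other side $\Vol(B_X)^{1/n}\ge\Vol(E)^{1/n}$ while $\Vol(E)^{1/n}$ is controlled using the volume-ratio upper bounds. Combining Theorem \ref{T:5.3} (giving $(\Vol(B_X)/\Vol(E))^{1/n}\le dC_q(X)n^{\alpha(q)}\ln n$) with the dimensional gain from the cube, one arrives at an inequality of the shape $N\ln(1+2C/\delta)\ge\ln\#\mathcal F\ge n\ln n-n\ln\big(C'KC_q(X)n^{\alpha(q)}\ln n\big)$, and for $n$ large depending on $C/\delta$ and $KC_q(X)$ the right side is at least $\tfrac{1}{2q}n\ln n$, which rearranges to the claimed bound $N\ge\dfrac{n\ln n}{2q\ln(1+2C/\delta)}$.

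The main obstacle, I expect, is step (3): obtaining a clean lower bound for $\Vol(Z_X)^{1/n}$ in terms of $N$, $n$ and the cotype constant. One cannot simply say the zonotope is large, since the $x_i$ could be very unevenly distributed; the content of Theorem \ref{T:5.5} is precisely that the minimal-volume projection of the cube is a parallelepiped, and one then needs a quantitative lower estimate for the volume of that parallelepiped relative to $\Vol(B_X)$. I would handle this by passing through the John ellipsoid: $\Vol(E)^{1/n}$ is comparable (by Theorems \ref{T:5.3}, \ref{T:5.3a}) to $\Vol(B_X)^{1/n}$ up to the factor $g(C_2(X))$ or $dC_q(X)n^{\alpha(q)}\ln n$, and inside $E$ one has genuine volume to spare because $Z_X\supseteq$ a multiple of $B_X\supseteq E$. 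The delicate bookkeeping is tracking how the factors $K$, $C/\delta$, $C_q(X)$ and the $n^{\alpha(q)}\ln n$ term interact under $\ln(\cdot)$, and choosing the threshold $n_0(C/\delta,KC_q(X))$ so that the lower-order terms $\ln n$, $\ln(1+2C/\delta)$, $\ln(KC_q(X))$, $\alpha(q)\ln n$ are all absorbed into the gap between $\ln n$ and $\tfrac1{2q}\ln n$ — which works because $\alpha(q)=\tfrac12-\tfrac1q<1$, leaving a definite fraction of $n\ln n$ on the winning side. Once that is arranged, taking logarithms in the volume inequality and dividing by $\ln(1+2C/\delta)$ yields the theorem.
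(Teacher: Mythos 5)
Your overall strategy (cover the zonotope by at most $(1+2C/\delta)^N$ translates of $B_X$, then show the volume ratio $\Vol(Z_X)/\Vol(B_X)$ is superexponential in $n=\dim X$) is the same as the paper's, and your counting step and the covering inequality are correct. The gap is in your step (3), and it is genuine: every estimate you actually write down for $\Vol(Z_X)$ is only a constant-per-dimension bound. The inclusions you use (correctly scaled, $K^{-1}B_X\subseteq Z_X$ from $\|f_i\|\le K$, and $E\subseteq B_X$) give $\Vol(Z_X)/\Vol(B_X)\ge (c/K)^n$, which after taking logarithms yields only $N\gtrsim n$ and can never produce the extra $\ln n$. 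The polynomial-in-$n$ gain per dimension, without which the theorem is empty, is never derived: your final ``inequality of the shape'' $\ln\#\mathcal F\ge n\ln n-n\ln\bigl(C'KC_q(X)n^{\alpha(q)}\ln n\bigr)$ is asserted rather than proved, and its leading term is in fact wrong.

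The paper's mechanism, which your sketch does not supply, forces you to leave $X$ and work in the associated space $Z\cong\R^N$ of Proposition \ref{P:2.4}: with $T,S$ the analysis and reconstruction operators and $L=T(X)$, the map $P=T\circ S$ is a genuine projection of $\R^N$ onto $L$ with $P(Q_N)=T(Z_X)$ (your zonotope in $X$ is only a linear image of $Q_N$, so Theorem \ref{T:5.5} does not apply to it directly), and the bimonotonicity of the associated basis together with $\|T\|\le K$ gives $T(B_X)\subseteq KB_Z\subseteq KQ_N$. Hence the John ellipsoid $E$ of $T(B_X)$ satisfies $\tfrac1K E\subseteq Q_N\cap L\subseteq M$, where $M$ is Ostrovskii's minimal-volume projection of $Q_N$ onto $L$, a parallelepiped; comparing the parallelepiped $M$ with this inscribed ellipsoid via John's theorem for the cube gives $\Vol(P(Q_N))^{1/n}\ge\Vol(M)^{1/n}\ge\frac{\sqrt n}{cK}\Vol(E)^{1/n}$. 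This is the sole source of the dimensional gain, and it is $\sqrt n$ per dimension, not $n$, and not a power of $N$ as your ``$N^{?}$'' suggests; without the containment $T(B_X)\subseteq KQ_N$ you cannot place any ellipsoid related to $B_X$ inside the minimal-volume parallelepiped, so knowing $M$ is a parallelepiped gives you nothing to compare with. Theorem \ref{T:5.3} then converts $E$ into $T(B_X)$ at the price $dC_q(X)n^{\alpha(q)}\ln n$, so the net gain is $n^{1/2-\alpha(q)}/\ln n=n^{1/q}/\ln n$ per dimension, and the correct final bound has leading term $\frac nq\ln n$, not $n\ln n$; correspondingly the reason the bookkeeping closes is $\alpha(q)<\tfrac12$ (i.e.\ $\tfrac12-\alpha(q)=\tfrac1q>0$), not your stated criterion $\alpha(q)<1$, under which a hypothetical $\alpha(q)=0.9$ would still ``work'' while the actual argument would collapse.
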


\begin{proof} Let $Z$ be the space with a basis $(z_i)$ and let $T: X\to Z$ and   $S:Z\to X$ be the associated  decomposition and the reconstruction operator as constructed in  the proof of Proposition \ref{P:2.4} (a) ``$\Rightarrow$''.
Since the $x_i$'s are normalized  the $z_i$'s are also of norm 1.
 After a linear  transformation we can assume that  $Z=\R^N$ and  
$(z_i)_{i=1}^N$ is the
 unit vector basis of $\R^N$.  
 Since $(z_i)$ is a bimonotone basis it follows that $\|z^*_i\|=1$, for $i\le N$ .
 Hence 
 $B_Z\subset  Q_N$, where $Q_N$ denotes the unit cube in $\R^N$.
Define
$L=T(X)$  and put $n=\dim(X)=\dim(L)$. Since $S\circ T=Id_X$  it follows
that $P=T\circ S$ is a projection from $Z$ onto $L$ and if we denote the John ellipsoid of $T(B_X)$
 by $E$ and we deduce that (recall that by Proposition \ref{P:2.4} (a) $\|T\|\le K$)
 \begin{equation}\label{E:5.6.1} 
 E\subset  T(B_X) =P\circ T(B_X)\subset  P(K\cdot B_Z)\subset P(K\cdot Q_N).
\end{equation}
 By Theorem \ref{T:5.5} there  is a minimal -volume projection $M$ of $Q_N$ onto $L$ which
 is a parallelepiped. Let $B_n$ denote the $n$ dimensional Euclidean ball in $\R^n$.
 Since there is a universal constant $c$ so that
 $$\Vol(B_n)\le  \left(\frac{c}{\sqrt n}\right)^n,$$
and since
$$\frac1{K}E\subset \frac1{\|T\|}E\subset L\cap B_Z\subset L\cap Q_N\subset  M,$$
we deduce from the fact that $B_n$ is the John ellipsoid of  the unit cube in $\R^n$ \cite{Jo}
(see also \cite[Chapter 3]{Pi2}), that
$$K\left( \frac{\Vol(P(Q_N))}{\Vol\big( E\big)}\right)^{1/n}
=\left( \frac{\Vol(P( Q_N))}{\Vol\big( \frac1{K}E\big)}\right)^{1/n} \ge
\left( \frac{\Vol( M)}{\Vol\big( \frac1{K}E\big)}\right)^{1/n}\ge \frac{\sqrt{n}}{c}
$$
The last inequality follows from applying a linear transformation $A$ to $L$ so that
$A(M)$ is a the unit cube in $L$ (with respect to some orthonormal basis of $L$) and, thus $A(\frac1K E)$ is an ellipsoid whose volume cannot exceed that
 of the Euclidean unit ball in $L$.
Since $T:(X,\|\cdot\|)\to (L,\|\cdot\|_{T(B_X)})$, where $\|\cdot\|_{T(B_X)}$ is the Minkowski
 functional for ${T(B_X)}$, is an isometry it follows from 
 Theorem \ref{T:5.3}  that
\begin{align}\label{E:5.6.2} \Vol^{1/n} (T(B_X))&\le d C_q(X) n^{\alpha(q)} \ln(n) \Vol^{1/n}(E)\\
 &\le  d cK C_q(X) n^{-\frac1q} \ln(n) \Vol^{1/n}(P(Q_N))\notag
\end{align}
(the universal constant  $d$ was introduced in Theorem \ref{T:5.3}).

Since the zonotope
$$P(Q_N)= T\circ S \Big(\Big\{\sum_{i=1}^N a_i z_i:  |a_i|\le 1 \Big\}\Big)=\Big\{ \sum_{i=1}^N a_i T(x_i):  |a_i|\le 1\Big\},$$
contains at most $\big(1+\frac{2C}{\delta})^N$ points from  the set
$D=\{\sum \delta n_i T(x_i) : (n_i)\subset\Z, \max \delta |n_i|\!\le\!C\}$ and since
  from our assumption  that $(x_if_i)_{i=1}^N$ satisfies the $(1,\delta,C)$-BCNQP it follows that
 $$P(Q_N)\subset \bigcup_{z\in D} z+ T(B_X),$$
we deduce that
$$\Big(1+\frac{2C}{\delta}\Big)^{N}\ge \frac{\Vol(P(Q_N))}{\Vol(T(B_X))}
\ge \left( \frac{n^{1/q}}{  Kdc  C_q(X)\ln (n)}\right)^n $$
and, thus,
$$N\ge \frac{  n\ln(n)}{q\ln \big(1+\frac{C}\delta\big)} -
\frac{n\ln(\ln(n)) }{\ln \big(1+\frac{2C}\delta\big)}-
 \frac{n\ln(dc K  C_q(X))}{\ln \big(1+\frac{2C}\delta\big)},$$
which easily implies our claim.
\end{proof}

In the  next result we will show that,  up to a  constant factor,  the result in 
Theorem \ref{T:5.6} is sharp.
We are using the simple fact that for any number $0\le r\le1$  and any $m\in\N$, 
$r$ can be approximated by a finite sum of dyadic numbers, say $\tilde r=\sum_{j=1}^m \sigma_j 2^{-j}$,
$\sigma_j\in\{0,1\}$, for $j=1,\ldots m$, so that $|r-\tilde r|\le 2^{-m}$.

\begin{prop}\label{P:5.7}
Let $X$ be an $n$-dimensional space with an Auerbach basis $(e_i,e^*_i)_{i=1}^n$. Let $m\in \N$ and let
 $K$ be the projection constant of $(e_i)_{i=1}^n$.
Then there is a frame $(x_{(i,j,s)},f_{(i,j,s)}:1\le i\le n,1\le  j\le m, s=0,1)$ (ordered lexicographically) so that
\begin{align}\label{E:5.7.1}
&\frac12 \le \|x_{(i,j,s)}\|\le 2\text{ and } \|f_{(i,j,s)}\|=1,
\text {whenever }1\!\le\! i\!\le\! n, 1\!\le\!j\!\le\! m\text{ and } s\!=\!0,1,\\
&\label{E:5.7.2}
\forall  (a_{(i,j,s)}:\!1\le i\!\le\!n,\,1\le \!j\!\le\! m,\!s\!=\!0,\!1)\!\subset\![-1,1]\,\\
 &\qquad\qquad \exists (k_{(i,j,s)}\!:\!i\le\!n,\! j\!\le\!m,\! s\!=\!0,\!1)\!\subset\!
\{-3,\!-2,...,3\}\notag \\
&\Big\|\sum_{i=1}^n\sum_{j=1}^m\sum_{s=0}^1 a_{(i,j,s)} x_{(i,j,s)}-
\sum_{i=1}^n\sum_{j=1}^m\sum_{s=0}^1 k_{(i,j,s)}
 x_{(i,j,s)}\Big\| \le 1+n\frac{2^{-m}}{1-2^{-m}}\notag\\
&\big(\text{i.e. $(x_{(i,j,s)},f_{(i,j,s)}:1\le i\!\le\!n,\,1\le j\!\le\!m, s\!=\!0,\!1)$ satisfies the}\notag\\
&\text{$\big(1+n\frac{2^{-m}}{1\!-\!2^{-m}},1,3\big)$-BCNQP}\big).\notag\\
&\label{E:5.7.3}\text{The projection constant of $(x_{(i,j,s)},f_{(i,j,s)}:1\!\le\!i\!\le\!n, 1\!\le\!j\!\le\!m, s\!=\!0,\!1)$ does not}\\
&\text{exceed $4K$.}\notag
\end{align}
\end{prop}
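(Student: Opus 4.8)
The plan is to build the frame by hand from the given Auerbach basis $(e_i,e^*_i)_{i=1}^n$, splitting each basis vector into $2m$ dyadically scaled copies so that arbitrary coefficients in $[-1,1]$ can be reconstructed by a greedy binary expansion. Concretely, for each $i\le n$, each $j\le m$ and $s\in\{0,1\}$ I would set
\[
x_{(i,j,s)}=2^{-j}e_i + (-1)^s w_{(i,j,s)},
\]
where the $w_{(i,j,s)}$ are small "ghost'' vectors chosen to make the system a frame (and to control the projection constant) but which are summed in $\pm$ pairs over $s$ so that their net contribution to any reconstruction cancels. The dual functionals would be $f_{(i,j,s)}=\tfrac12 (-1)^{s}\,(\text{something})$ paired to detect the $e_i$-component; the factor $\tfrac12$ reflects the two copies $s=0,1$, exactly as in the doubling construction of Proposition~\ref{P:2.7}. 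In fact the cleanest route is to \emph{apply} Proposition~\ref{P:2.7}: start from the trivial frame $(e_i, e_i^*)_{i=1}^n$ of $X$ with associated space $Z_0=\R^n$ (with the norm inherited from the basis, so $(e_i)$ is bimonotone after the standard renorming), then apply Proposition~\ref{P:2.7} $m$ times, or once with $Y$ a suitable $\ell_\infty$-sum, to split each $e_i$ into $m$ dyadic pieces, and finally double with $s=0,1$. Each application of Proposition~\ref{P:2.7} multiplies the relevant functionals by $\tfrac12$ and leaves the reconstruction operator norm controlled, and it keeps track of an associated space $\tilde Z$ with a bimonotone basis.

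The norm bounds \eqref{E:5.7.1} follow directly: $x_{(i,j,s)}$ is a sum of $2^{-j}e_i$ (norm $\le 1$) and a ghost vector whose norm I control to be $\le 1$, giving $\|x_{(i,j,s)}\|\le 2$, and by choosing the ghost vectors so that $\|x_{(i,j,s)}\pm x_{(i,j,1-s)}\|$ is bounded below (e.g. making the ghosts genuinely present) one gets $\|x_{(i,j,s)}\|\ge\tfrac12$; the $f_{(i,j,s)}$ are normalized by fiat. The frame/reconstruction identity is automatic from Proposition~\ref{P:2.7} (or a direct check: the ghosts cancel in pairs, and $\sum_j 2^{-j}<1$ plus a geometric-series correction recovers each $e_i$). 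The projection-constant bound \eqref{E:5.7.3}: each doubling step via Proposition~\ref{P:2.7} with $Y$ normalized preserves the projection constant up to a factor of $2$ coming from the $\tfrac12$ in the functionals, but we only double \emph{once} (the $s$-coordinate), and the $m$-fold dyadic split can be arranged inside a single $\ell_\infty$-sum so the projection constant of the block basis stays at $2K$; the extra ghost contribution costs another factor $2$, giving $4K$. I would verify this by writing the partial-sum projections of $(\tilde z_i)$ explicitly in terms of those of $(z_i)$ in Proposition~\ref{P:2.7}'s proof and estimating $\|S\|,\|T\|$.

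The heart is the quantization statement \eqref{E:5.7.2}. Given $(a_{(i,j,s)})\subset[-1,1]$, first use the ghost-pair cancellation to reduce to approximating $\sum_{i,j,s} a_{(i,j,s)}\,2^{-j}e_i$ by an integer combination (the ghost parts can be matched exactly by choosing $k_{(i,j,0)}$ and $k_{(i,j,1)}$ to have the same ghost-contribution, i.e. equal up to the pairing sign). For each fixed $i$, I must approximate a target real number $t_i=\sum_{j,s}a_{(i,j,s)}2^{-j}\in[-3,3]$ (since $\sum_j 2^{-j}<1$ and there are $2$ values of $s$, actually $|t_i|\le 2\sum 2^{-j}<2$) by $\sum_{j,s}k_{(i,j,s)}2^{-j}$ with $k_{(i,j,s)}\in\{-3,\dots,3\}$. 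Using the elementary dyadic-approximation fact quoted just before the statement — any $r\in[0,1]$ is within $2^{-m}$ of $\sum_{j=1}^m\sigma_j 2^{-j}$, $\sigma_j\in\{0,1\}$ — applied to $|a_{(i,j,s)}|$ coordinatewise (with an extra sign and at most a carry of size $1$, hence the range $\{-3,\dots,3\}$ rather than $\{-1,0,1\}$), each $t_i$ is matched with error at most $2^{-m}$ per term in the relevant sense; summing the geometric tail gives per-coordinate error $\tfrac{2^{-m}}{1-2^{-m}}$, and summing over $i\le n$ against the Auerbach functionals gives total error $\le n\tfrac{2^{-m}}{1-2^{-m}}$. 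The leading "$1+$'' absorbs the one remaining non-dyadic unit: writing each $a_{(i,j,s)}=k_{(i,j,s)}\cdot 1 + (\text{fractional dyadic part})$ costs at most $\|\sum(\text{fractional parts})x_{(i,j,s)}\|\le 1$ after the dyadic replacement of the fractional parts, via the triangle inequality and $\|x_{(i,j,s)}\|\le 2$ combined with the bimonotone/Auerbach structure. The main obstacle I anticipate is bookkeeping the carries in the dyadic expansion so that the integers $k_{(i,j,s)}$ genuinely stay in $\{-3,\dots,3\}$ (a naive binary expansion of a sum of several numbers can overflow $\{0,1\}$), and simultaneously keeping the ghost-vector contributions exactly cancelling while the $e_i$-contributions reconstruct $t_i$; getting these two requirements compatible, rather than each being easy in isolation, is where the construction must be chosen carefully. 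Everything else — the frame identity, the norm estimates, the projection constant — is routine once the building blocks from Propositions~\ref{P:2.4} and~\ref{P:2.7} are invoked.
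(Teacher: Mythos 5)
There is a genuine gap, and it sits exactly at the heart of the proposition, namely \eqref{E:5.7.2}. Your construction $x_{(i,j,s)}=2^{-j}e_i+(-1)^sw_{(i,j,s)}$ requires the ghost vectors to be semi-normalized (for $j$ close to $m$ the term $2^{-j}e_i$ is tiny, so the lower bound $\|x_{(i,j,s)}\|\ge\frac12$ forces $\|w_{(i,j,s)}\|$ to be of order $1$), and then the claim that ``the ghost parts can be matched exactly'' fails: the target's ghost contribution carries the real coefficients $a_{(i,j,0)}-a_{(i,j,1)}\in[-2,2]$, which integer differences $k_{(i,j,0)}-k_{(i,j,1)}$ cannot reproduce, so unless all ghosts are parallel you accumulate an error of order $nm$ rather than $1+n\frac{2^{-m}}{1-2^{-m}}$. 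If you do make all ghosts a single fixed vector, the remaining problem is the coupling you yourself flag as unresolved: the coefficients $k_{(i,j,0)}$ you need (of modulus up to $3$) to absorb the accumulated ghost budget also multiply $2^{-j}e_i$ in your symmetric construction, perturbing the $e_i$-components by up to about $3$ per coordinate, and fixing that perturbs the budget again. The appeal to Proposition \ref{P:2.7} does not help here, since that proposition only produces a frame with an associated space and says nothing about quantization with uniformly bounded integer coefficients; the quantization must be, and in the paper is, proved by a bare-hands argument. Likewise your accounting for the leading ``$1+$'' and for the range $\{-3,\dots,3\}$ (a ``carry of size $1$'') does not correspond to a mechanism that is actually exhibited.

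The paper's construction resolves precisely this coupling by breaking the symmetry in $s$: it sets $x_{(i,j,0)}=e_1$ (no dyadic part at all), $x_{(i,j,1)}=e_1+\frac{2^{-j}}{1-2^{-m}}e_i$, $f_{(i,j,0)}=-e_i^*$, $f_{(i,j,1)}=e_i^*$, so the frame identity holds because the $e_1$-parts cancel in each $(i,j)$-pair and $\sum_{j\le m}\frac{2^{-j}}{1-2^{-m}}=1$. The two tasks are then decoupled: first the $s=1$ coefficients are chosen in $\{0,\pm1\}$ by the dyadic approximation fact, giving error at most $\frac{2^{-m}}{1-2^{-m}}$ in each $e_i$-direction (total $n\frac{2^{-m}}{1-2^{-m}}$); second, the leftover coefficient of $e_1$, a single real number $M$ with $|M|\le 3nm$, is absorbed by the $nm$ coefficients $k_{(i,j,0)}\in\{-3,\dots,3\}$ of the repeated vector $e_1$, leaving a residual of modulus at most $1$ --- which is exactly the ``$1+$'' in the bound and the reason the alphabet is $\{-3,\dots,3\}$. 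This choice costs nothing in the other coordinates because $x_{(i,j,0)}$ has no $e_i$-component, which is what your symmetric ansatz cannot arrange. The projection constant bound \eqref{E:5.7.3} is then an explicit partial-sum computation in the lexicographic order (each partial sum differs from a basis projection of $x$ by at most two boundary terms of size $\le 2|a_{i_0}|$ and $|a_{i_1}|$), not a consequence of Proposition \ref{P:2.7}.
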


\begin{proof} For $1\le i\le n$ and  $1\le j\le m$ define
$
x_{(i,j,0)}=e_1$, $x_{(i,j,1)}=e_1+\frac{2^{-j}}{1-2^{-m}} e_i$, 
$f_{(i,j,0)}=-e^{*}_i$ and $f_{(i,j,1)}=e^*_i$.
Since for every $x\in X$
\begin{align*}
x=\sum_{i=1}^n e^*_i(x) e_i
  =\sum_{i=1}^n \sum_{j=1}^me^*_i(x) e_i\frac{2^{-j}}{1-2^{-m}}
  = \sum_{i=1}^n \sum_{j=1}^m f_{(i,j,1)}(x)  x_{(i,j,1)}+f_{(i,j,0)}(x)x_{(i,j,0)},
  \end{align*}  
  $(x_{(i,j,s)},f_{(i,j,s)}:1\!\le\!i\le\!n,1\le\!j\!\le m, s=0,1)$ is a frame of $X$ and it satisfies
  \eqref{E:5.7.1}.
In order to verify \eqref{E:5.7.2}  let $(a_{(i,j,s)}:1\!\le\!i\le\!n, 1\!\le\!j\!\le\!m, s=0,1)\subset[-1,1]$ be given.
  For $i=1,2,\ldots, n$ it follows that $\Big|\sum_{j=1}^m a_{(i,j,1)} \frac{2^{-j}}{1-2^{-m}}\Big|\le 1$, and, thus, we can choose $(k_{(i,j,1)}:j\le m)\subset\{0,\pm1\}$ so that for each $i\le n$
  \begin{equation}\label{E:5.7.4}
  \Big|\sum_{j=1}^m a_{(i,j,1)} \frac{2^{-j}}{1-2^{-m} }-\sum_{j=1}^m k_{(i,j,1)} 
  \frac{2^{-j}}{1-2^{-m} }\Big|\le \frac{2^{-m}}{1-2^{-m}}.
  \end{equation} 
  Since  the absolute value of
  $M=\sum_{i=1}^n\sum_{j=1}^m a(i,j,1)+a(i,j,0)-k_{(i,j,1)}$
  is  at most $3nm$ we can choose for $1\le i\le n$ and $1\le j\le m$,  $k_{(i,j,0)}\in\{-3,-2,\ldots, 2,3\}$ so that 
   $a=M-\sum_{i=1}^n\sum_{j=1}^mk_{(i,j,0)}$,
 has absolute value at most $1$. We compute
  \begin{align*}
   \Big\|\sum_{i=1}^n&\sum_{j=1}^m\sum_{s=0}^1 a_{(i,j,s)} x_{(i,j,s)}-
\sum_{i=1}^n\sum_{j=1}^m\sum_{s=0}^1 k_{(i,j,s)} x_{(i,j,s)}\Big\|\\
 &\le   \Big\|\sum_{i=1}^n\sum_{j=1}^m a_{(i,j,1)} e_i\frac{2^{-j}}{1-2^{-m}}-
   \sum_{i=1}^n\sum_{j=1}^m k_{(i,j,1)} e_i\frac{2^{-j}}{1-2^{-m}}\Big\|\\
   &\qquad+ \Big|\sum_{i=1}^n\sum_{j=1}^m a_{(i,j,1)} +a_{(i,j,0)}-k_{(i,j,0)}-k_{(i,j,1)} \Big|\\
    &\le  1+\sum_{i=1}^n \Big|\sum_{j=1}^m (a_{(i,j,1)}-k_{(i,j,1)}) \frac{2^{-j}}{1-2^{-m}}\Big|
\le 1+ n\frac{2^{-m}}{1-2^{-m}}
  \end{align*}
  which proves \eqref{E:5.7.2}.
  
  To estimate the projection constant of 
  $(x_{(i,j,s)},f_{(i,j,s)}:1\!\le\!i\le n,1\le\! j\!\le m, s\!=\!0,1)$  we denote by $\le_{\text{lex}}$  the lexicographic order on 
  $\{(i,j,s) :i\le n, j\le m, s=0,1\}$, and let
  $$x=\sum_{i=1}^n a_ie_i =\sum_{i=1}^n\sum_{j=1}^m -a_ie_1+a_ie_1
  +a_ie_i\frac{2^{-j}}{1-2^-m}
   =\sum_{i=1}^n\sum_{j=1}^m\sum_{s=0,1} f_{(i,j,s)}(x)x_{(i,j,s)}
   $$
  and $(i_0,j_0,s_0)\le_{\text{lex}}(i_1,j_1,s_1)$. Then, if $i_0<i_1$,
  \begin{align*}
 \Big\| &\sum_{(i_0,j_0,s_0)\le_{\text{lex}}(i,j,s)\le_{\text{lex}}(i_1,j_1,s_1) }
 f_{(i,j,s)}(x )x_{(i,j,s)}\Big\|\\
 &=\Big\| 1_{\{s_0=1\}}\Big[a_{i_0}e_1
  +a_{i_0}\frac{2^{-j_0}}{1-2^{-m}}\Big]
 +\sum_{j=j_0+1}^m -a_{i_0} e_1 +a_{i_0} e_1 + a_{i_0}
  \frac{2^{-j}}{1-2^{-m}}e_{i_0}\\
&\qquad+\sum_{i=i_0+1}^{i_1-1}\sum_{j=1}^m \Big(-a_ie_1+a_ie_1+
   a_{i} \frac{2^{-j}}{1-2^{-m}}e_{i}\Big)\\
  &\qquad+\sum_{j=1}^{j_1} \Big(-a_{i_1} e_{1}+a_{i_1}  e_{1}+    
  a_{i_1} \frac{2^{-j}}{1-2^{-m}}e_{i_1}\Big) - 1_{\{s_1=0\}}
  a_{i_1}  e_{1} \Big\|\\
   &\le 2|a_{i_0}|+\Big\|\sum_{i=i_0+1}^{i_1-1} a_i e_i\Big\|+|a_{i_1}|\le 4K\|x\|.
  \end{align*} 
 If $i_0=i_1$  similar estimates give the to the same result  for the remaining cases
 and \eqref{E:5.7.3} follows.
\end{proof}

\begin{remark}\label{R:5.6b} If we choose in Proposition  \ref{P:5.7} $m=\lceil 2\log n\rceil$ and thus $2^{-m}\simeq 1/n^2$ we obtain a frame for $X$ of  approximate size   $4n\log_2(n)$ 
having the $(3,1,3)$-BCNQP. Thus as we mentioned earlier, up to a constant Theorem \ref{T:5.6} is best possible.
\end{remark}

\begin{remark}\label{R:5.6a} In Theorem \ref{T:5.6} we assumed for simplicity that the $x_i$'s of our frame are normalized.
It is easy to see that the same proof works for  a general frame, in that case
$n_0$ depends also on $a=\min\{\|x_i\|: i\le N, x_i\not=0\}$ and $b=\max \{\|x_i\|:i\le N\}$.
\end{remark}

With  a similar proof to that  of Theorem \ref{T:5.6} we derive an upper estimate  for 
  $\min_{i\le N}\|x_i\|$, $i\le N$,
 assuming  that $(x_i,f_i)_{i=1}^N$ is a frame of an $n$ dimensional space $X$ which satisfies
 the $(1,\delta,C)$-BCNQP for some choice of $\delta>0$ and $C<\infty$ assuming  that $N$ is proportional to $n$.

\begin{thm}\label{T:5.6a} For any choice of $\delta\in(0,1]$, and $C,K,q, c_2 \ge 1$ there is
a value $h=h(\delta,C,K,q, c_2)$ so that the following holds for all $n\in\N$.

If $X$ is an $n$-dimensional space, $N\le q n$ and $(x_i,f_i)_{i=1}^N$ is a frame of $X$  with projection constant $K$
which has the $(1,\delta,C)$-BCNQP, then if $C_2(X)\le c_2$,
$$ \min_{i\le N}\|x_i\|\le  \frac{h(\delta,C,K,q, c_2)}{\sqrt{n}}. $$
\end{thm}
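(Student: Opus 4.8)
The idea is to run the volume--cotype scheme from the proof of Theorem~\ref{T:5.6} ``backwards'': there $\|x_i\|=1$ was fixed and one bounded $N$ from below, whereas here $N\le qn$ is the constraint and the information has to be squeezed out of $\min_i\|x_i\|$. The crucial point is that under the hypothesis $C_2(X)\le c_2$ one may use the Milman--Pisier volume ratio estimate (Theorem~\ref{T:5.3a}), which gives a \emph{constant} bound; it is precisely the absence of the growing factor $n^{\alpha(q)}\ln n$ that occurs in Theorem~\ref{T:5.3} which yields the exponent $\tfrac12$ in the decay $n^{-1/2}$. One should also note in advance that $\max_i\|x_i\|$ will play no role.

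\textbf{Setup.} We may assume $x_i\ne 0$ for all $i$ (else $\min_i\|x_i\|=0$ and there is nothing to prove). Put $a=\min_{i\le N}\|x_i\|$, $n=\dim X$, let $S\colon\R^N\to X$, $e_i\mapsto x_i$, be the synthesis map and $T\colon X\to\R^N$, $x\mapsto(f_i(x))_{i=1}^N$, the analysis map, so that $S\circ T=\Id_X$; then $P:=T\circ S$ is a projection of $\R^N$ onto $L:=T(X)$, with $\dim L=n$ and $T$ an isomorphism of $X$ onto $L$. (Equivalently, one may use the associated space $(Z,(z_i))$ of Proposition~\ref{P:2.4}(a).) Write $Q_N=[-1,1]^N$ and $W=S(Q_N)=\{\sum_i a_ix_i:|a_i|\le1\}$, so that $T(W)=P(Q_N)$, and let $E\subset L$ be the John ellipsoid of $T(B_X)$. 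All volumes below are Lebesgue measure inside $L$; only ratios will occur.

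\textbf{A lower bound for $\Vol(P(Q_N))$.} By Remark~\ref{R:2.3}(b) one has $|f_i(x)|\,\|x_i\|\le K\|x\|$ for all $i$ and $x$, hence $\frac{a}{K}T(B_X)\subseteq Q_N$ and therefore $\frac{a}{K}T(B_X)\subseteq L\cap Q_N$. By Theorem~\ref{T:5.5} there is a minimal-volume projection $M$ of $Q_N$ onto $L$ which is a (necessarily centrally symmetric) parallelepiped; since the projection defining $M$ fixes $L$ we get $L\cap Q_N\subseteq M$, whence $\frac{a}{K}E\subseteq M$. Mapping $M$ affinely onto the cube $[-1,1]^n$ of $L$ turns $\frac{a}{K}E$ into a centred ellipsoid inscribed in that cube, so its volume is at most that of the Euclidean ball $B_2^n$; using $\Vol(B_2^n)^{1/n}\le c_1/\sqrt n$ for a universal constant $c_1$, this gives $\Vol(M)\ge (\sqrt n/c_1)^n(a/K)^n\Vol(E)$, and by minimality $\Vol(P(Q_N))\ge\Vol(M)$.

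\textbf{An upper bound, and the conclusion.} By the $(1,\delta,C)$-BCNQP, every point of $W$ lies within $B_X$ of some $\sum_i\delta k_ix_i$ with $k_i\in\Z$, $|k_i|\le C/\delta$; there are at most $(2C/\delta+1)^N$ such points, so $W$ --- and hence $P(Q_N)=T(W)$ --- is covered by at most $(2C/\delta+1)^N$ translates of $T(B_X)$, giving $\Vol(P(Q_N))\le(2C/\delta+1)^N\Vol(T(B_X))$. Since $(L,\|\cdot\|_{T(B_X)})$ is isometric to $X$, it has cotype-$2$ constant at most $c_2$, so Theorem~\ref{T:5.3a} yields $\Vol(T(B_X))\le g(c_2)^n\Vol(E)$ with $g$ as there (which is increasing). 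Combining the two bounds on $\Vol(P(Q_N))$, cancelling the common factor $\Vol(E)>0$, taking $n$-th roots, and using $N\le qn$ gives $\frac{a}{K}\cdot\frac{\sqrt n}{c_1}\le(2C/\delta+1)^q g(c_2)$, that is $\min_i\|x_i\|=a\le h/\sqrt n$ with $h(\delta,C,K,q,c_2)=c_1\,K\,g(c_2)\,(2C/\delta+1)^q$. The only step that is not routine is the lower bound for $\Vol(P(Q_N))$: it is exactly the interplay of Osmann's parallelepiped (Theorem~\ref{T:5.5}) with the estimate $\Vol(B_2^n)^{1/n}\le c_1/\sqrt n$ that produces the $\sqrt n$, and one must feed the cotype hypothesis in through the constant Milman--Pisier bound rather than through Theorem~\ref{T:5.3}, which would only give decay of order $n^{-1/q}\ln n$.
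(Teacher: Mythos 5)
Your argument is correct and is essentially the paper's own sketch: the same chain (Ostrovskii's parallelepiped for the minimal-volume shadow, the John-ellipsoid estimate $\Vol(B_2^n)^{1/n}\lesssim n^{-1/2}$, the Milman--Pisier volume-ratio bound from Theorem~\ref{T:5.3a} in place of Theorem~\ref{T:5.3}, and the $(2C/\delta+1)^N$ covering from BCNQP), with $N\le qn$ absorbed in the $n$-th root. The only cosmetic difference is that you work directly with $Q_N$ by using $|f_i(x)|\le K\|x\|/\|x_i\|$ to get $\frac{a}{K}T(B_X)\subseteq Q_N$, whereas the paper runs the same inclusion through the box $\tilde Q_N=\prod[-1/\|x_i\|,1/\|x_i\|]$ and then rescales by $\alpha=\min_i\|x_i\|$; you also spell out the step $\frac{a}{K}E\subseteq L\cap Q_N\subseteq M$ that the paper's sketch leaves implicit.
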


\begin{proof}[Sketch of proof] Let  $(x_i,f_i)_{i=1}^N$ be a frame of $X$, $N\le qn$, which has the  $(1,\delta,C)$-BCNQP and projection constant $K$.
As in the proof of  Theorem \ref{T:5.6} we let $Z$ be the  associated space with basis $(z_i)$ which was constructed in Proposition \ref{P:2.4}, $T:X\to Z$ the  associated decomposition operator, and $S$ the associated  reconstruction operator. Let
 $L=T(X)$, and $P=T\circ S$,
 and let us  also assume that $Z=\R^N$  and $z_i=e_i$ for $i\le N$. Note that now
  $\|z_i\|=\|x_i\|$ and $z^*_i=\|x_i\|^{-1}$ and we can therefore follow the proof
 of  Theorem \ref{T:5.6}  replacing $Q_N$ by  the box
$$\tilde Q_N=\prod_{i=1}^N\Big[-\frac1{\|x_i\|}, \frac1{\|x_i\|}\Big].$$
As in the proof  of Theorem  \ref{T:5.6} it follows that
$\frac1K T(B_X)\subset P(B_Z)\subset  P(\tilde Q_N)$. 
For  the John ellipsoid $E$ of $T(B_X)$ it follows therefore that $\frac1K E\subset M$, where a $M$ is a minimal volume 
projection of $\tilde Q_N$ which is also a parallelepiped  in $L$, and as before we deduce  that
 $K\Vol^{1/n}(P(\tilde Q_N))\ge \Vol^{1/n}(E) \sqrt{n}/c$.
Instead of applying Theorem \ref{T:5.3} we now use  Theorem \ref{T:5.3a} and letting
 $\alpha =\min_{i\le N}\|x_i\|$ we  deduce that
\begin{align*}
\Vol^{1/n}(T(B_X))&\le g(C_2(X)) \Vol^{1/n}(E)\\
 &\le \frac{g(C_2(X))cK\Vol^{1/n}(P(\tilde Q_N))}{\sqrt{n}}\le
 \frac{g(C_2(X)) c K\Vol^{1/n}(P(Q_N))}{\sqrt{n} \alpha}.
\end{align*}
We can again compare the volume of the zonotope $P(Q_N)$ with the  volume of the union
$\bigcup_{z\in D} z+T(B_X)$, where $D$ is defined as in the proof of Theorem \ref{T:5.3}, and deduce that
$$\Big(1+\frac{2C}\delta\Big)^{qn}\ge
\Big(1+\frac{2C}\delta\Big)^N\ge \frac{\Vol(P(Q_N))}{\Vol(T(B_X))}\ge 
\Big(\frac{\sqrt{n} \alpha}{g(C_2(X))cK}\Big)^n.
 $$
Taking the $n$th root on both sides yields our claim.
\end{proof}

\begin{remark}\label{R:5.8a}
In section \ref{S:6} we will recall a result of Lyubarski and Vershinin \cite{LV} which shows that
  for $q>1$ there are $\vp<1$ , $\delta<1$, $C<\infty$  so that  for any $n\in\N$ 
 and there is a Hilbert  frame  $(x_i)_{i=1}^N$ of $\ell_2^n$, with $N\le qn$, so that
  $(x_i)_{i=1}^N$  satifies $(\vp,\delta,C)$-BCNQP.
\end{remark}

As in the previous section we formulate a corollary of Theorem \ref{T:5.6} for the infinite dimensional
situation.

\begin{cor}\label{C:5.8}
 Assume that  $X$ has the $\pi_\lambda$-property and that $\Pb=(P_n)$ is a sequence of uniformly bounded projections
 approximating point-wise the identity  on  $X$. Let  $(x_i,f_i)$
 be a frame of $X$, with $(x_i)$ being bounded, and assume  for  an increasing sequence $(L_n)\subset \N$,  $0<r,\delta,\vp <1$ and $C<\infty$ that
 the following conditions are satisfied:
 \begin{enumerate}
 \item[a)] For $n\in\N$ and $x\in X_n=P_n(X)$ 
 $$\Big\| x-\sum_{i=1}^{L_n} \la f_i,x \ra P_n(x)\Big\|\le (1-r)\|x\|,
 $$ 
 \item[b)]   $\|P_n(x_i)\|\ge r$, for all  $n\in\N$ and $i\le L_n$, and
 \item[c)] For all $n\in\N$ and all  $x\in\{\sum_{i=1}^{L_n} a_i x_i :|a_i|\le 1 \text{ for }i=1,2,\ldots L_n\} $ there is a sequence $(k_i)_{i\le L_n} $ so that $$\Big\|x-\sum_{i=1}^{L_n} \delta k_i x_i\Big\|<\vp \text{ and }
 \max_{i\le L_n} |k_i|\delta \le C$$
 (in particular $(x_i,f_i)$ satisfies the BCNQP).
 \end{enumerate}
Then there is either a constant $c>0$ so that $L_n \ge c\dim P_n(X) \ln(\dim P_n(X))$
 or  the spaces $\{\ell_\infty^n:n\in|N\}$ are uniformly contained in $X$.
\end{cor}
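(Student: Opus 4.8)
The plan is to imitate, in the setting of the $\pi_\lambda$-property, the reduction carried out in the paragraphs preceding Corollary \ref{C:4.4}: from the frame $(x_i,f_i)$ of $X$ we manufacture finite frames of the finite-dimensional spaces $X_n=P_n(X)$, apply the finite-dimensional Theorem \ref{T:5.6} to each $X_n$, and then read off the dichotomy according to whether or not $X$ has some finite cotype. Throughout set $\lambda=\sup_n\|P_n\|$ and let $K$ be the projection constant of $(x_i,f_i)$.

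Following the construction in the paragraphs before Corollary \ref{C:4.4}, for $x\in X_n$ put $Q_n(x)=\sum_{i=1}^{L_n}\la f_i,x\ra P_n(x_i)=\sum_{i=1}^{L_n}\la P_n^*(f_i),x\ra P_n(x_i)$ (the second equality because $P_n x=x$ on $X_n$). Condition (a) gives $\|x-Q_n(x)\|\le(1-r)\|x\|$ on $X_n$, so $Q_n$ is invertible on $X_n$ with $\|Q_n\|\le 2$ and $\|Q_n^{-1}\|\le 1/r$, whence
$$\big(y_i^{(n)},g_i^{(n)}\big)_{i=1}^{L_n}:=\big(Q_n^{-1}P_n(x_i),\,P_n^*(f_i)|_{X_n}\big)_{i=1}^{L_n}$$
is a frame of $X_n$ of length $L_n$. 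From condition (b), boundedness of $(x_i)$, and the uniform bounds $\|Q_n^{\pm1}\|\le\max\{2,1/r\}$ and $\|P_n\|\le\lambda$ one gets $0<a_0\le\|y_i^{(n)}\|\le b_0<\infty$ with $a_0,b_0$ independent of $n$; similarly the projection constant of $\big(y_i^{(n)},g_i^{(n)}\big)$ is at most $\|Q_n^{-1}\|\,\|P_n\|\,K\le\lambda K/r$, again independent of $n$. Finally, applying the bounded operator $Q_n^{-1}P_n$ to the approximations supplied by condition (c), and rescaling $(\vp,\delta,C)$ proportionally by means of Remark \ref{R:5.2}(b) exactly as is done before Corollary \ref{C:4.4}, one checks that $\big(y_i^{(n)},g_i^{(n)}\big)_{i=1}^{L_n}$ satisfies the $(1,\delta',C')$-BCNQP for constants $\delta',C'$ that do not depend on $n$.

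Now fix $q$ with $2\le q<\infty$. Cotype-$q$ constants pass to subspaces, so $C_q(X_n)\le C_q(X)$, and since $a_0$, $b_0$, $\delta'$, $C'$ and the projection-constant bound $\lambda K/r$ are uniform in $n$, inspection of the proofs of Theorem \ref{T:5.6} and Remark \ref{R:5.6a} shows that the threshold there may be taken monotone in each of its arguments and hence bounded on the relevant range by a single value $N_0$ independent of $n$. Also $\dim X_n\to\infty$: if $\dim P_n(X)$ remained $\le d$, then, since $P_nx\to x$, every finite subset of $X$ could be approximated arbitrarily well inside a $d$-dimensional subspace, forcing $\dim X\le d$. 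Thus for all large $n$ we have $\dim X_n\ge N_0$, and applying the non-normalized form of Theorem \ref{T:5.6} (Remark \ref{R:5.6a}) to $\big(y_i^{(n)},g_i^{(n)}\big)_{i\le L_n}$ yields
\[
L_n\ \ge\ \dim(X_n)\,\ln(\dim(X_n))\,\frac{1}{2q\,\ln\bigl(1+2C'/\delta'\bigr)}.
\]
Since only finitely many $n$ are excluded and each $L_n\ge 1$, shrinking $c:=1/\bigl(2q\ln(1+2C'/\delta')\bigr)>0$ makes $L_n\ge c\,\dim P_n(X)\,\ln(\dim P_n(X))$ hold for all $n$. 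This first alternative, however, is only available when $C_q(X)<\infty$ for some finite $q$.

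If instead $C_q(X)=\infty$ for every $q<\infty$, then for each $N\in\N$ we have $C_{q(N)}(X)=\infty>C(N)$, so by the contrapositive of \eqref{E:6.1.5} in Theorem \ref{T:5.4} the space $X$ contains a $2$-isomorphic copy of $\ell_\infty^N$; hence $\{\ell_\infty^N:N\in\N\}$ is uniformly contained in $X$, which is the second alternative. As the two cases ``$C_q(X)<\infty$ for some $q<\infty$'' and ``$C_q(X)=\infty$ for all $q<\infty$'' exhaust all possibilities, one of the alternatives always holds. The main obstacle is not a single deep step but the bookkeeping of the previous paragraph: one must check that the passage from $(x_i,f_i)$ on $X$ to $\big(y_i^{(n)},g_i^{(n)}\big)$ on $X_n$ preserves, with constants uniform in $n$, semi-normalization, the uniform projection-constant bound, and the BCNQP — this is precisely where hypotheses (a)--(c) and the proportional rescaling of $(\vp,\delta,C)$ enter — and, secondly, that the threshold of Theorem \ref{T:5.6} can be chosen independently of $n$, which relies on cotype constants not increasing under passage to subspaces together with $\dim P_n(X)\to\infty$.
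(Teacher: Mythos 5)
Your proposal is correct and follows essentially the same route as the paper's proof: you build the finite frames $\big(Q_n^{-1}P_n(x_i),f_i|_{X_n}\big)_{i\le L_n}$ of $X_n$, check uniform semi-normalization, a uniform projection-constant bound and a uniform BCNQP exactly as the paper does, and then invoke Theorem \ref{T:5.6} with Remark \ref{R:5.6a} and Theorem \ref{T:5.4}. The only difference is that you spell out the rescaling via Remark \ref{R:5.2}(b) and the final cotype/$\ell_\infty^n$ dichotomy, which the paper leaves implicit.
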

\begin{proof}
From   assumption (a)  it follows that the operators $(Q_n)$, with
$$Q_n: X_n\to X_n,\qquad x\mapsto \sum_{i=1}^{L_n}\la P_n^*(f_i),x\ra P_n(x_i ),$$
are uniformly bounded ($\|Q_n\|\le 2$, for $n\in\N$), invertible and their inverses are uniformly bounded
($\|Q_n^{-1}\|\le \frac1{r}$, for $n\in\N$). For $x\in X_n$ we write
\begin{align*}
x= Q_n^{-1}Q_n(x)
=\sum_{i=1}^{L_n}\la f_i,x\ra (Q_n^{-1}\circ P_n)(x_i ) ,\end{align*}
and deduce therefore that 
$$\big(y_i^{(n)},g_i^{(n)}\big)_{i=1}^{L_n}:=\big({(Q_n^{-1}\circ P_n)(x_i )}, f_i|_{X_n}\big)_{i=1}^{L_n}$$
 is a frame of
$X_n$. We now verify that for $n\in\N$  $\big(y_i^{(n)},g_i^{(n)}\big)_{i=1}^{L_n}$ satisfies the 
 $(\tilde \vp,\delta,C)$-BCNQP for some $\tilde\vp>0 $, which is independent of $n$. Indeed by assumption (c)   one can choose for $n\in\N$ and  
 $(a_i)_{i=1}^{L_n}\in[-1,1]$ 
 some  $(k_i)_{i=1}^{L_n}\subset \Z$ so that
$$\Big\|\sum_{i=1}^{L_n}  a_i x_i-\sum_{i=1}^{L_n} \delta k_i x_i\Big\|\le \vp,
 \text{ and } \max_{i\le L_n} |k_i|\le \frac{C}{\delta}
$$
and, thus,
\begin{align*}
\Big\|\sum_{i=1}^{L_n} a_i y_i^{(n)}-\sum_{i=1}^{L_n} \delta k_iy_i^{(n)}\Big\|&=
 \Big\|\sum_{i=1}^{L_n}  a_i\ {(Q_n^{-1}\circ P_n)(x_i )}-
 \sum_{i=1}^{L_n} \delta k_i\ {(Q_n^{-1}\circ P_n)(x_i )}\Big\|\\
&\le \vp\max_{i\le L_n}\|Q_n^{-1}\circ P_n\|\le \frac\vp{r}\sup_n\|P_n\|=:\tilde \vp.
\end{align*}
Then for $n\in\N$ and $i\le L_n$ it follows from assumption (b) that
$$ \frac{r}2\le \frac{r}{\|Q_n\|}\le \|y^{(n)}_i\|\le \|P_n\| \cdot\|Q^{-1}_n\|\cdot \|x_i\|\le 
\frac{ \sup_j\|P_j\|  \sup_j\|x_j\|}r<\infty.$$
Thus Theorem \ref{T:5.6}, Remark \ref{R:5.6a} and  Theorem \ref{T:5.4} yield our claim.
\end{proof}

By Remark \ref{R:5.2}, for  semi-normalized bases  $(x_i)$ (together with their
coordinate functionals) the properties BCNQP and NQP are equivalent. 
 We therefore deduce from Theorem \ref{T:5.6} the following
\begin{cor}\label{C:5.9}
An infinite dimensional  Banach space $X$ with non trivial cotype cannot have a semi-normalized basis having the NQP.

In particular (see Problem 5.18 in \cite{DOSZ}) $\ell_1$ does not have   a semi-normalized basis with the NQP.
\end{cor}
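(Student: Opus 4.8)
The plan is to deduce Corollary \ref{C:5.9} from Theorem \ref{T:5.6} together with Theorem \ref{T:5.4}, essentially by a finite-dimensional reduction. Suppose, for contradiction, that $X$ is an infinite-dimensional Banach space with nontrivial cotype which admits a semi-normalized basis $(x_i)$ with the NQP, say the $(\vp,\delta)$-NQP. By rescaling we may assume $\vp=1$. Let $(f_i)$ be the coordinate functionals of $(x_i)$; by Remark \ref{R:5.2}(c) the frame $(x_i,f_i)$ satisfies the $(1,\delta,C)$-BCNQP with $C=1+\sup_i\|f_i\|<\infty$ (since $(x_i)$ is semi-normalized, $(f_i)$ is bounded). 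Moreover the projection constant $K$ of $(x_i,f_i)$ equals the projection constant of the basis $(x_i)$, which is finite.

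Next I would pass to the finite-dimensional sections. For $n\in\N$ let $X_n=[x_1,\dots,x_n]$, let $P_n$ be the basis projection onto $X_n$, and set $y_i^{(n)}=x_i$ for $i\le n$ with coordinate functionals $f_i|_{X_n}$; this is a frame of $X_n$ (indeed a basis), it is semi-normalized with bounds independent of $n$, its projection constant is at most $K$, and it inherits the $(1,\delta,C)$-BCNQP from $(x_i)$ simply by restriction. Since $X$ has nontrivial cotype, there is a fixed $q<\infty$ with $C_q(X)<\infty$; as each $X_n$ is a subspace of $X$, $C_q(X_n)\le C_q(X)=:c_q$ for all $n$. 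Now apply Theorem \ref{T:5.6} (using Remark \ref{R:5.6a} to handle the fact that the basis is merely semi-normalized rather than normalized, so that $n_0$ also depends on the upper and lower norm bounds) to the frame $(y_i^{(n)},f_i|_{X_n})$ of the $n$-dimensional space $X_n$: for $n$ large enough, namely $n\ge n_0(C/\delta, Kc_q, a, b)$, we get
\[
n \ge \dim(X_n)\ln(\dim(X_n))\,\frac{1}{2q\ln(1+2C/\delta)} = \frac{n\ln n}{2q\ln(1+2C/\delta)}.
\]
But the right-hand side tends to infinity faster than $n$ (the factor $\ln n\to\infty$ while the denominator is a fixed constant), so this inequality fails for all sufficiently large $n$ — a contradiction. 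Hence no such semi-normalized basis with the NQP exists.

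For the particular case of $\ell_1$: it is classical that $\ell_1$ has cotype $2$ (it satisfies $(\sum\|x_i\|^2)^{1/2}\le c(\mathrm{ave}\|\sum\pm x_i\|^2)^{1/2}$; in fact for $\ell_1$ the average of the signed sums is comparable to $\sum\|x_i\|$ by direct computation, which already gives cotype $2$). Thus $\ell_1$ has nontrivial cotype, and the general statement applies, answering Problem 5.18 of \cite{DOSZ} in the negative. I would also remark that the argument in fact shows more: any infinite-dimensional $X$ not containing $\ell_\infty^n$'s uniformly (equivalently, by Theorem \ref{T:5.4}, of nontrivial cotype) fails to have a semi-normalized basis, or even a semi-normalized frame with bounded $x_i$'s, with the BCNQP, via the same reduction through Corollary \ref{C:5.8}.

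The only genuine subtlety is bookkeeping rather than conceptual: one must make sure that the passage from the infinite-dimensional basis to its finite sections preserves all four relevant quantities uniformly in $n$ — the BCNQP constants $(\vp,\delta,C)$, the projection constant $K$, the semi-normalization bounds $a=\inf\|x_i\|$ and $b=\sup\|x_i\|$, and the cotype constant $c_q$ — so that the threshold $n_0$ from Theorem \ref{T:5.6} is one fixed number and the asymptotic contradiction $n\ge c\,n\ln n$ can actually be triggered. Each of these is immediate (restriction of a basis to an initial segment only decreases the projection constant, cotype passes to subspaces, and the BCNQP for a basis is obviously inherited by finite sections with the same constants), so there is no real obstacle; the heart of the matter is entirely contained in the finite-dimensional volumetric estimate of Theorem \ref{T:5.6}.
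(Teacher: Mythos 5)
Your proposal is essentially correct and follows the same route as the paper: a finite-dimensional reduction feeding into the volumetric estimate of Theorem \ref{T:5.6} (plus Theorem \ref{T:5.4} to convert ``contains $\ell_\infty^n$'s uniformly'' into ``trivial cotype''). The paper packages the reduction through Corollary \ref{C:5.8} with $P_n$ the basis projections and $L_n=n$; you unroll that corollary and apply Theorem \ref{T:5.6} directly to the sections $X_n=[x_1,\dots,x_n]$, which is the same content with one fewer layer of abstraction.

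One small imprecision worth fixing: the claim that $(x_i)_{i\le n}$ ``inherits the $(1,\delta,C)$-BCNQP from $(x_i)$ simply by restriction'' with the same constants is not literally true. The $(\vp,\delta)$-NQP for the infinite basis produces a quantized approximant $\sum_i \delta k_i x_i$ whose support need not lie in $\{1,\dots,n\}$, so it need not belong to $X_n$. The fix is to apply the basis projection $P_n$: since $P_n(x - \sum_i \delta k_i x_i) = x - \sum_{i\le n}\delta k_i x_i$ for $x\in X_n$, the finite section satisfies the $(K\vp,\delta,C)$-BCNQP where $K$ is the basis projection constant. As $K$ is fixed and independent of $n$, this does not affect the asymptotic contradiction $n\gtrsim n\ln n$, so the argument survives — but the constants do change, and it is precisely this kind of uniformity bookkeeping that Corollary \ref{C:5.8} is designed to encapsulate.
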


\begin{proof} Suppose $(x_i)$ is a semi-normalized basis  with the $(\vp,\delta)$-CQP. Then we can we can $(P_n)$ take to be the basis projections and $L_n=n$. By Corollary \ref{C:5.8}  does not have finite cotype.
\end{proof}

\section{Concluding remarks and open problems}\label{S:6}

Kashin's  \cite{Ka} celebrated result states that  for any 
 $\lambda>1$ there is a $K=K_\lambda$ so that 
 for any $n\in\N$ and any $N\ge \lambda n$, $N\in\N$, there is 
 an orthogonal projection $U$ from  $\R^N$ onto $\R^n$
 (i.e. $U$ is an $N$ by $n$ matrix whose rows are orthonormal) 
 so that 
 \begin{equation}\label{E:6.1}
 B_n\subset \frac{K}{\sqrt N} U( Q_N)\subset K B_n
 \end{equation}
(as before $B_n$ is the euclidean unit ball in $\R^n$ while 
 $Q_N$ is the unit cube in $\R^N$).

   Lyubarskii and Vershinin observed  in \cite{LV} that 
  the column vectors $(u_i)_{i=1}^N$  form a tight frame
  (with $A=B=1$), that  the first inclusion in \eqref{E:6.1}
  yields that every $x\in B_n$ can be written as
  $$x=\sum_{i=1}^N \frac{K}{\sqrt N} a_i u_i \text{ with } \|(a_i)\|_{\ell_\infty}\le 1,$$
  and that the second inclusion implies that the operator
  $\frac{1}{\sqrt N} U: \ell_\infty^N\to \ell_2^n$ is of norm not greater than $1$, and that
  therefore  for given $\vp>0$  there is a sequence $(k_i)_{i=1}^N\subset \Z\cup[-K/\vp,K/\vp]$,
  so that  $\max_{i\le N} |a_iK-k_i\vp|\le \vp$ and, thus,
  \begin{equation}\label{E:6.2}
  \Big\| x- \sum_{i=1}^N \vp k_i\frac{u_i}{\sqrt N}\Big\|\le \vp \text{ and }
  \max |\vp k_i |\le K. 
  \end{equation}
 
 Thus,  Kashin's orthogonal projections (which are actually chosen randomly),
   lead to a frame  $(x_i^{(n)})_{i\le N}=(u_i^{(n)}/\sqrt{N})_{i\le N}$ for $\ell_2^n$, whose length is not larger
   than a fixed multiple of $n$, and, for  which we can represent any element $x$ in $B_n$ 
    as a quantized  linear combination with bounded coefficients.
  Since the zonotope $\{\sum_{i=1}^N a_i x_i |a_i|\le 1 \text{ for }i=1,2\ldots N\}$ lies in $B_n$,
it follows that the  Hilbert frame  $(x_i^{(n)})_{i\le N}$ satisfies for any $\vp>0$ the
$(\vp,\vp, K)$-BCNQP.

     In view of the results presented in  sections \ref{S:4} and \ref{S:5} this 
      is  the best one could do in the finite dimensional case. We are therefore interested
       in extensions of this result by Lyubarskii and Vershinin to other spaces as well as the infinite dimensional space,
       \begin{prob}\label{Prob:6.1}    Does the above cited result hold for other finite dimensional spaces?  More precisely, assume that $0<\delta,\vp<1$, $C\ge 1$ are fixed. 
For which $n\in\N$ and which $n$-dimensional spaces $X$ can we find a frame
       $(x_i,f_i)_{i=1}^N$, with, say $N=2n$, so that for  any $x\in B_X$ there is a $(k_i)_{i=1}^N\subset \Z$ 
       so that
       $$   \Big\| x- \sum_{i=1}^N \delta k_i x_i\Big\|\le \vp \text{ and }
  \max |\delta k_i |\le C.$$
\end{prob}
\begin{remark} The above presented argument from \cite{LV} shows that if there is a quotient $Q:\ell^N_\infty\to X$,
 and a frame $(x_i,f_i)_{i=1}^N$ of $X$,  so that $Q(e_i)=x_i$, for $i=1,\ldots N$, 
 and so that for some $K<\infty$  $B_X\subset Q(B_{\ell_\infty})\subset K B_X$, then
 there is for  all $x\in B_X$ and all $\delta>0$ a sequence $(k_i)_{i\le N}\subset \Z$, so that
$$    \Big\| x- \sum_{i=1}^N \delta k_i x_i\Big\|\le \|Q\|\delta/2\le K\delta/2 \text{ and }
\max_{i\le N}|k_i|\le \frac1\delta.
$$
Conversely, assume that for some   $0<\delta,\vp<1$, $C\ge 1$ we can find
 for all $x\in B_X$ a sequence $(k_i)\subset\Z$  so that 
$$    \Big\| x- \sum_{i=1}^N \delta k_i x_i\Big\|\le\vp \text{ and }
\max_{i\le N}|k_i|\le \frac C\delta.
$$
Then we can choose  by induction for $x\in B_X$ a $z_i$ with
$$z_n=\sum_{i=1}^N k^{(n)}_i\delta x_i, (k_i)\subset \Z\cap [-C/\delta,C/\delta],$$
so that 
$$\Big\|x -\sum_{i=1}^n \vp^{i-1} z_i\Big\|\le \vp^n.$$
Indeed assuming $z_1,\ldots z_{n-1}$ have been chosen
we apply our assumption to $y=\vp^{1-n}\big[x -\sum_{i=1}^{n-1} \vp^{1-i} z_i\big]\in B_X$
to find $z_n$.

Thus, it follows that 
$$x=\sum_{i=1}^\infty \vp^{i-1} z_i=\sum_{j=1}^N x_j\sum_{i=1}^\infty \vp^{i_1} \delta k^{(i)}_j,$$
which means that there is a $C_1<\infty$ only depending on $\vp,\delta$ and $C$ so that
$$B_X\subset \Big\{\sum_{j=1}^N a_i x_i: |a_i|\le C_1 \Big\}.$$
If we define now 
$$Q:\ell_\infty^N\to B_X,\quad z\mapsto \sum_{i=1}^N C_1z_ix_i,$$
we deduce that $Q$ is a quotient map and that $B_X\subset Q(B_{\ell_\infty})$ but we cannot deduce
(at least not obviously) a bound for $\|Q\|$.
\end{remark}
 \begin{prob}
 Is there an infinite dimensional version of the result of Lyubarskii and Vershinin?
  I.e. for which  infinite dimensional Banach spaces $X$ with a basis $(e_i)$
   does there exist $0<\delta,\vp<1$, $C\ge 1$ and a frame 
   $(x_i,f_i)_{i\in\N}$ so that for any $x\in B_X$, $n=\max\supp(x)<\infty$,
     there is a $(k_i)_{i=1}^N$, with, say, $N\le 2n$,  so that
        $$   \Big\| x- \sum_{i=1}^N \delta k_i x_i \Big\|\le \vp \text{ and }
  \max |\delta k_i |\le C?$$
       \end{prob}

\end{document}